\theoremstyle{plain}
\newtheorem{thm}{Theorem}[section]
\newtheorem{lem}[thm]{Lemma}
\newtheorem{prop}[thm]{Proposition}
\newtheorem{cor}[thm]{Corollary}
\theoremstyle{definition}
\newtheorem{de}[thm]{Definition}
\theoremstyle{remark}
\newtheorem{rem}[thm]{Remark}
\def \N {\mathbb N}
\def \R {\mathbb R}
\def \U {\mathcal{U}}
\def \t {\mathcal{T}}
\def \U {\mathcal{U}}
\def \id {{\rm id}}
\def \a {\alpha }
\def \ep {\epsilon}
\begin{document}
	\title{Local weighted topological pressure}
	\author{Fangzhou Cai}
	\address[F. Cai]{School of Mathematics and Systems Science, Guangdong Polytechnic Normal University, Guangzhou, 510665, PR China}
	\email{cfz@mail.ustc.edu.cn}
		\maketitle
		\begin{abstract}
		In [D. Feng, W. Huang,  Variational principle for weighted topological pressure. J. Math. Pures Appl. (2016)], the authors studied    weighted topological pressure and established a	variational principle for it. In this paper, we  introduce the notion of local weighted topological pressure and generalize Feng and Huang's main results to localized version. 
		\end{abstract}
	\section{Introduction}	
	\subsection{Weighted topological entropy and pressure}We say that $(X,T)$ is a topological dynamical system (TDS for short) if $X$ is a compact
	metric space and $T$ is a continuous map from $X$ to $X$. We define $M(X)$ and $M(X,T)$ as the sets of Borel probability measures and  $T$-invariant Borel probability measures on $X$. We first briefly review the  classical theory of entropy and pressure in dynamical systems. One of the most basic theorems about  entropy is variational principle \cite{good69, din70, good71}:
	$$h_{top}(X, T) = \sup_{\mu\in M(X,T)}h_\mu(T),$$
	where $h_{top}(X, T)$ is the topological entropy of $(X,T)$ and $h_\mu(T)$ is the Kolomogorov–Sinai entropy.
	Motivated by statistical mechanics, Ruelle \cite{rue73}  and Walters \cite{wal75} introduced the topological pressure $P(T, f)$ for  a
	real-valued continuous function $f$ on $X$ and proved the variational principle:
$$P(T,f) = \sup_{\mu\in M(X,T)}\big(h_\mu(T)+\int_X fd\mu\big).$$	

Motivated by fractal geometry of self-affine carpets and sponges \cite{bed84, mc84, kp96a}, Feng and Huang \cite{fh} introduced weighted topological  pressure for factor maps
between dynamical systems, and established a variational principle for it. To be  precise, let us introduce
some notations first.
 Let $(X,T)$ and $(Y,S)$ be two TDSs. We say $(Y, S)$ is a factor of $(X, T)$ if there exists a continuous surjective
map $\pi:X\to Y$ such that $\pi\circ T=S\circ \pi$. The map $\pi$ is called a factor map from $X$ to $Y$. For $\mu\in M(X,T)$, we
denote by $\pi\mu\in M(Y,S)$ the push-forward of $\mu$ by $\pi.$ Denote the set of real-valued continuous functions  on $X$ by $C(X,\R)$.
Let $f\in C(X,\R)$ and $a_1>0,a_2\geq 0$. The main purpose of \cite{fh} is to consider 
the following question:

How can one define a meaningful term $P^{(a_1,a_2)}(T,f)$ such that the following variational
principle holds?
$$P^{(a_1,a_2)}(T, f) = \sup_{\mu\in M(X,T)}\Big(a_1h_\mu(T) + a_2h_{\pi\mu}(S)+\int_X fd\mu\Big).$$
Feng and Huang's definition for $P^{(a_1,a_2)}(T,f)$  was inspired from the dimension theory of affine
invariant subsets of tori, and from the “dimension” approaches of Bowen \cite{bowen} and Pesin–Pitskel \cite{pp} in
defining the topological entropy and topological pressure for arbitrary subsets.
We will give the detailed definition in the next section.

In recent years, lots of authors focus on  weighted topological and measure-theoretic
entropy and pressure. In \cite{wh}, Wang and Huang introduced various weighted topological
entropies from different points of view and studied their relationships.     The notion of measure-theoretic weighted entropy was also defined and studied in \cite{wh}. In \cite{zczy} the authors studied weighted topological entropy of the set of generic points. It was proved in \cite{zczy} that the weighted
topological entropy of generic points of the ergodic measure $\mu$ is equal to the weighted
measure entropy of $\mu$, which generalized the classical result of Bowen \cite{bowen}.
In \cite{sxz}, the authors studied weighted entropy of a flow on non-compact sets. In \cite{yclz}, the authors studied weighted topological entropy of random dynamical systems.
Recently, Tsukamoto \cite{tm} introduced a new approach to weighted topological  pressure and established a corresponding variational principle for it. The new approach is a modification of the familiar definition of topological entropy and pressure. It is very different from the original definitions in \cite{fh}. The equivalence
of the two definitions is highly nontrivial. In \cite{ycyy}, the authors generalized Tsukamoto's approach to amenable group action.
\subsection{Local entropy and pressure}
The local theory of entropy and pressure  is fundamental to many areas in dynamical systems. It has relations with entropy pairs, entropy sets, entropy points, and entropy structure, etc. (please see \cite{4,5,6,7,10,13,14,15,ro,hmry,hyz,hy}). In this subsection we briefly review the local theory of entropy and pressure. With the notion of entropy pairs \cite{5,7} in both topological and measure-theoretic situations, 
the study of local version of the variational principle of entropy has attracted a lot of attention. Blanchard, Glasner and Host \cite{6} showed the following
local variational principle: for an open cover $\U$ of $X$, there exists $\mu\in M(X,T)$ such that
$$\inf_{\a\succeq \U}h_{\mu}(T,\a)\geq h_{top}(T,\U),$$
where the infimum is taken over all  $\a\in \mathcal{P}_X,$ i.e., all finite Borel partitions of $X$, finer than $\U$. To make a general investigation on the converse of the inequality,
Romagnoli \cite{ro} introduced two types of measure-theoretic entropies related to $\U$: 
$$h_{\mu}(T,\U)=\lim_{n\to\infty}\frac{1}{n}\inf_{\a\succeq \U_0^{n-1}}H_{\mu}(\a),$$
$$h_{\mu}^+(T,\U)=\inf_{\a\succeq \U}h_{\mu}(T,\a).$$
 He showed the following local variational principle:
$$h_{top}(T,\U)=\max_{\mu\in M(X,T)}h_{\mu}(T,\U),$$
and the supremum can be attained by an ergodic measure.
Later, Glasner and Weiss \cite{13} proved that if $(X,T)$ is
invertible then the local variational principle also holds for 
$h_{\mu}^+(T,\U):$
$$h_{top}(T,\U)=\max_{\mu\in M(X,T)}h^+_{\mu}(T,\U).$$
Huang and Yi \cite{hy} generalized the above local variational
principles of entropy to the case of pressure: For any $f\in C(X,\R)$, the local pressure $P(T,f,\U)$  satisfies
$$P(T,f,\U)=\max_{\mu\in M(X,T)}\Big(h_\mu(T,\U)+\int_X fd\mu\Big),$$
and the supremum can be attained by an ergodic measure.
	The relative
	local variational principle for entropy was proved by Huang, Ye and Zhang in \cite{hyz}.
In \cite{wu},  Wu studied	various notions of local pressure of subsets and measures, which were defined by Carathéordory–Pesin construction.
	
	\medskip
	
In this paper,  we study the local weighted topological  pressure  with respect
to a fixed open cover. We generalize the main results in \cite{fh} to localized version. As a corallary, we get the variational principle for weighted topological  pressure, which was first proved in \cite{fh}.
\section{Local Weighted topological  pressure}
In this section, we introduce the notion of local weighted  topological  pressure and give some basic properties of it. First let us recall the definition of weighted  topological  pressure introduced in \cite{fh}.

	Let $k\geq 1,$ ${\bf a}=(a_1,\ldots,a_k)$ with $a_1>0$ and $a_i\geq 0,i=2,\ldots,k$. Let $(X_i,T_i), i=1,\ldots,k$ be TDSs with metric $d_i$ and $(X_{i+1},T_{i+1})$ be a factor  of $(X_i,T_i)$ for each $i$. Let $\tau_i:X_1\to X_{i+1}$  be the factor map and set $\tau_0=\id_{X_1}.$
	\begin{de}\cite[{\bf a}-{\it weighted Bowen ball}]{fh}
 For $x\in X_1,n\in\N$ and $\ep>0,$ denote
		$$B_n^{\bf a}(x,\ep):=\{y\in X_1: d_i(T_i^j\tau_{i-1}x,T_i^j\tau_{i-1}y)<\ep,0\leq j\leq \lceil(a_1+\ldots+a_i)n\rceil-1,1\leq i\leq k\},$$
		where $\lceil u\rceil$ denotes the least integer $\geq u.$
\end{de}
 For $f\in C(X_1,\R)$, denote $S_nf=\sum_{i=0}^{n-1}f\circ T_1^i.$
 Now we  give the definition
of {\bf a}-weighted topological pressure.
\begin{de}\cite[{\bf a}-{\it weighted topological pressure}]{fh}
Let  $Z\subset X_1, N\in\N, s\geq 0,$ $\ep>0$ and $f\in C(X_1,\R)$.
Define $$\Lambda^{{\bf a},s}_{N,\ep}(Z,f)=\inf\sum_{j}e^{-sn_j+\frac{1}{a_1}\sup_{x\in A_j}S_{\lceil a_1n_j\rceil}f(x)},$$
where the infimum is taken over all countable collections $\{(n_j,A_j)\}_j$ such that  $n_j\geq N, A_j$ be a Borel subset of $B_{n_j}^{\bf a}(x_j,\ep)$ for some $x_j\in X_1$ and $Z\subset \bigcup_jA_j.$
Define
$$\Lambda^{{\bf a},s}_{\ep}(Z,f)=\lim_{N\to \infty}\Lambda^{{\bf a},s}_{N,\ep}(Z,f)$$
and \begin{equation*}
	\begin{split}
	P^{{\bf a}}(T_1,Z,\ep,f)&=\inf\{s:\Lambda^{{\bf a},s}_{\ep}(Z,f)=0\}\\
	&=\sup\{s:\Lambda^{{\bf a},s}_{\ep}(Z,f)=+\infty\}.	
	\end{split}
\end{equation*} The {\it {\bf a}-weighted topological pressure for $Z$ with potential $f$} is defined as
$$P^{{\bf a}}(T_1,Z,f)=\lim_{\ep\to 0}P^{{\bf a}}(T_1,Z,\ep,f).$$

	For a function $g:X_1\to [0,+\infty)$, define
$$W^{{\bf a},s}_{N,\ep}(g)=\inf\sum_{j}c_je^{-sn_j+\frac{1}{a_1}\sup_{x\in A_j}S_{\lceil a_1n_j\rceil}f(x)},$$
where the infimum is taken over all countable collections $\{(n_j,A_j,c_j)\}_j$ such that  $n_j\geq N,$  $0<c_j<\infty$, $A_j$  be a Borel subset of $B_{n_j}^{\bf a}(x_j,\ep)$ for some $x_j\in X_1$ and $$\sum_{j}c_j\chi_{A_j}\geq g.$$
For $Z\subset X_1,$ we set $W^{{\bf a},s}_{N,\ep}(Z,f)=W^{{\bf a},s}_{N,\ep}(\chi_Z).$ Define
$$W^{{\bf a},s}_{\ep}(Z,f)=\lim_{N\to \infty}W^{{\bf a},s}_{N,\ep}(Z,f)$$
and \begin{equation*}
	\begin{split}
		P_{W}^{{\bf a}}(T_1,Z,\ep,f)&=\inf\{s:W^{{\bf a},s}_{\ep}(Z,f)=0\}\\
		&=\sup\{s:W^{{\bf a},s}_{\ep}(Z,f)=+\infty\}.
	\end{split}
\end{equation*}
The {\it average {\bf a}-weighted topological pressure  for  $Z$ with potential $f$} is defined as
 $$P_{W}^{{\bf a}}(T_1,Z,f)=\lim_{\ep\to 0}P_{W}^{{\bf a}}(T_1,Z,\ep,f).$$		
\end{de}
If there is no confusion, we omit $T_1,f$ and simply write $P^{{\bf a}}(Z),P_{W}^{{\bf a}}(Z)$ and $P^{{\bf a}}(Z,\ep),P_{W}^{{\bf a}}(Z,\ep)$ for short.

 It was proved in \cite{fh} that weighted topological pressure and  average weighted topological pressure are equal:
 \begin{prop}\cite[Proposition 3.5]{fh}\label{pro2.3}
 	For any $f\in C(X_1,\R)$ and $\ep>0$, we have$$P^{{\bf a}}(T_1,Z,6\ep,f)\leq P_{W}^{{\bf a}}(T_1,Z,\ep,f)\leq P^{{\bf a}}(T_1,Z,\ep,f),$$ hence $P^{{\bf a}}(Z,f)=P_{W}^{{\bf a}}(Z,f).$
 \end{prop}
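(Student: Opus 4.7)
The plan is to prove the two inequalities separately. The right one, $P_W^{\bf a}(T_1,Z,\ep,f)\leq P^{\bf a}(T_1,Z,\ep,f)$, is essentially a tautology: given any family $\{(n_j,A_j)\}_j$ admissible for $\Lambda^{{\bf a},s}_{N,\ep}(Z,f)$, the weighted family $\{(n_j,A_j,1)\}_j$ satisfies $\sum_j \chi_{A_j}\geq \chi_Z$ and is therefore admissible for $W^{{\bf a},s}_{N,\ep}(\chi_Z)$, with identical sum. Taking infima and letting $N\to\infty$ yields $W^{{\bf a},s}_\ep(Z,f)\leq \Lambda^{{\bf a},s}_\ep(Z,f)$, and hence the desired inequality between critical exponents.

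For the harder inequality $P^{\bf a}(T_1,Z,6\ep,f)\leq P_W^{\bf a}(T_1,Z,\ep,f)$, I would fix $s>P_W^{\bf a}(T_1,Z,\ep,f)$ and $\d>0$ and show $\Lambda^{{\bf a},s+\d}_{6\ep}(Z,f)=0$; this forces $P^{\bf a}(T_1,Z,6\ep,f)\leq s+\d$, and letting $\d\downarrow 0$ and $s\downarrow P_W^{\bf a}(T_1,Z,\ep,f)$ closes the proof. Since $W^{{\bf a},s}_\ep(Z,f)=0$, for each $N$ and $\eta>0$ we can pick a weighted family $\{(n_j,A_j,c_j)\}_j$ with $n_j\geq N$, $\sum_j c_j\chi_{A_j}\geq \chi_Z$, and $\sum_j c_j \exp\!\big(-sn_j+a_1^{-1}\sup_{A_j}S_{\lceil a_1 n_j\rceil}f\big)\leq \eta$. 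I would then decompose $Z$ according to the scale at which this weighted cover carries non-trivial mass: for $I\geq N$ set
\[
Z_I:=\Big\{x\in Z : \sum_{j:\,n_j=I} c_j\chi_{A_j}(x)\geq \tfrac{6}{\pi^2 I^2}\Big\}.
\]
Since $\sum_{I\geq 1}6/(\pi^2 I^2)=1$ and $\sum_j c_j\chi_{A_j}(x)\geq 1$ on $Z$, every $x\in Z$ meets the threshold at some $I\geq N$, so $Z=\bigcup_{I\geq N}Z_I$.

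For each fixed $I$, a Vitali-type selection in the Bowen metric at time $I$ applied to $\{B_I^{\bf a}(x_j,\ep):n_j=I,\,A_j\cap Z_I\neq\emptyset\}$ produces a maximal disjoint subfamily $\{B_I^{\bf a}(x_{j_k},\ep)\}_k$ whose enlargements cover $\bigcup_{n_j=I}A_j\supseteq Z_I$; re-centring at witnesses $y_k\in A_{j_k}\cap Z_I$ then yields a covering of $Z_I$ by Bowen balls of radius $6\ep$ admissible for $\Lambda^{{\bf a},\cdot}_{6\ep}$. The crucial estimate is
\[
\sum_k e^{-sI+a_1^{-1}\sup S_{\lceil a_1 I\rceil}f}\leq \frac{\pi^2 I^2}{6}\sum_{j:\,n_j=I} c_j\, e^{-sI+a_1^{-1}\sup_{A_j}S_{\lceil a_1 I\rceil}f},
\]
which follows by pairing each selected ball with a witness $y_k$ satisfying $\sum_{j:n_j=I}c_j\chi_{A_j}(y_k)\geq 6/(\pi^2 I^2)$ and using disjointness of the chosen $\ep$-balls together with a bounded-multiplicity control (obtained from the triangle inequality in the Bowen metric) to redistribute the weights $c_j$ across the selected indices. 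Summing over $I\geq N$ and absorbing the polynomial factor $\pi^2 I^2/6$ into the exponential shift $s\mapsto s+\d$ via $\sum_I I^2 e^{-\d I}<\infty$ gives $\Lambda^{{\bf a},s+\d}_{N,6\ep}(Z,f)\leq C_\d\eta$; letting $\eta\downarrow 0$ yields $\Lambda^{{\bf a},s+\d}_{6\ep}(Z,f)=0$, as required. The final equality $P^{\bf a}(Z,f)=P_W^{\bf a}(Z,f)$ then follows by letting $\ep\to 0$ in both inequalities. The main obstacle I anticipate is the bounded-multiplicity control in the Vitali step, since the Bowen metric at time $I$ is not uniformly doubling; the enlargement of the radius from $\ep$ to $6\ep$ and the polynomial buffer $1/I^2$ are precisely what provide enough slack to circumvent any doubling hypothesis.
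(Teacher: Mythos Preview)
The paper does not supply its own proof of this proposition; it is quoted verbatim from \cite[Proposition~3.5]{fh}, so there is no in-paper argument to compare against. Your outline does follow the Feng--Huang strategy: the right-hand inequality is immediate, and for the left-hand one you decompose $Z=\bigcup_{I\ge N}Z_I$ according to the level at which the weighted family carries mass, run a Vitali-type selection at each level, and absorb the polynomial factor $I^2$ into the exponent shift $s\mapsto s+\delta$. That skeleton is exactly right.

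The gap is in the justification of your ``crucial estimate''. You select a maximal pairwise-disjoint subfamily of the balls $B_I^{\bf a}(x_j,\ep)$ and then choose witnesses $y_k\in A_{j_k}\cap Z_I$. But these witnesses need not be $2\ep$-separated in $d_I^{\bf a}$ (lying in disjoint $\ep$-balls only forces the centres $x_{j_k}$ to be $2\ep$-apart, not the $y_k$), so a single $A_j$, which has $d_I^{\bf a}$-diameter $<2\ep$, can contain many of the $y_k$. Your ``bounded-multiplicity control from the triangle inequality'' would amount to bounding the number of $2\ep$-separated centres inside a fixed $4\ep$-ball, and the Bowen metric $d_I^{\bf a}$ has no such bound uniform in $I$; enlarging the radius to $6\ep$ and inserting the $1/I^2$ buffer does nothing to cap this overlap. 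The clean fix is to reverse the order of operations: first take a maximal $2\ep$-separated subset $E_I\subset Z_I$ and use its points as the witnesses/centres; then each $A_j$ contains at most one point of $E_I$ automatically and no doubling is needed. A second point you skate over is that the $\sup S_{\lceil a_1 I\rceil}f$ on the left of your displayed inequality is taken over the new covering sets, not over the $A_j$; if those sets are full $6\ep$-Bowen balls this sup can exceed $\sup_{A_j}S_{\lceil a_1 I\rceil}f$ by an amount of order $I$. One handles this by exploiting the ``Borel subset'' clause in the definition and taking the covering sets for $\Lambda_{6\ep}$ to be suitable unions of the original $A_j$ (still lying inside a single $6\ep$-Bowen ball) rather than full balls.
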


\medskip

  We now define  local weighted topological pressure for a fixed open cover. The entropy case please also see \cite{zczy}.
\begin{de}
Let $Z\subset X_1, N\in\N, s\geq 0$, $f\in C(X_1,\R)$ and $\mathcal{U}_i$ be open covers of $X_i,i=1,\ldots,k$.
Define $$\Lambda^{{\bf a},s}_{N,\{\mathcal{U}_i\}_{i=1}^k}(Z,f)=\inf\sum_{j}e^{-sn_j+\frac{1}{a_1}\sup_{x\in A_j}S_{\lceil a_1n_j\rceil}f(x)},$$
where the infimum is taken over all countable collections $\{(n_j,A_j)\}_j$ such that  $n_j\geq N,$  $Z\subset \bigcup_jA_j$ and $A_j$ be a Borel subset of some element of $ \bigvee\limits_{i=1}^{k}(\tau_{i-1}^{-1}\mathcal{U}_i)_0^{\lceil(a_1+\ldots+a_i)n_j\rceil-1}.$ Since $\Lambda^{{\bf a},s}_{N,\{\mathcal{U}_i\}_{i=1}^k}(Z,f)$ does not decrease with $N$,
define
$$\Lambda^{{\bf a},s}_{\{\mathcal{U}_i\}_{i=1}^k}(Z,f)=\lim_{N\to \infty}\Lambda^{{\bf a},s}_{N,\{\mathcal{U}_i\}_{i=1}^k}(Z,f)$$
and
\begin{equation*}
	\begin{split}
	P^{{\bf a}}(T_1,Z,\{\mathcal{U}_i\}_{i=1}^k,f)&=\inf\{s:\Lambda^{{\bf a},s}_{\{\mathcal{U}_i\}_{i=1}^k}(Z,f)=0\}\\&=\sup\{s:\Lambda^{{\bf a},s}_{\{\mathcal{U}_i\}_{i=1}^k}(Z,f)=+\infty\}.
	\end{split}
\end{equation*}
 
For a function $g:X_1\to [0,+\infty)$, define $$W^{{\bf a},s}_{N,\{\mathcal{U}_i\}_{i=1}^k}(g)=\inf\sum_{j}c_je^{-sn_j+\frac{1}{a_1}\sup_{x\in A_j}S_{\lceil a_1n_j\rceil}f(x)},$$
	where the infimum is taken over all countable collections $\{(n_j,A_j,c_j)\}_j$ such that  $n_j\geq N,$ $0<c_j<\infty$, $A_j$  be a Borel subset of some element of $ \bigvee\limits_{i=1}^{k}(\tau_{i-1}^{-1}\mathcal{U}_i)_0^{\lceil(a_1+\ldots+a_i)n_j\rceil-1}$ and $\sum_{j}c_j\chi_{A_j}\geq g.$ 
	For $Z\subset X_1,$ we set $$W^{{\bf a},s}_{N,\{\mathcal{U}_i\}_{i=1}^k}(Z,f)=W^{{\bf a},s}_{N,\{\mathcal{U}_i\}_{i=1}^k}(\chi_Z).$$ Since $W^{{\bf a},s}_{N,\{\mathcal{U}_i\}_{i=1}^k}(Z,f)$ does not decrease with $N$, define
	$$W^{{\bf a},s}_{\{\mathcal{U}_i\}_{i=1}^k}(Z,f)=\lim_{N\to \infty}W^{{\bf a},s}_{N,\{\mathcal{U}_i\}_{i=1}^k}(Z,f)$$
and
\begin{equation*}
	\begin{split}
	P_{W}^{{\bf a}}(T_1,Z,\{\mathcal{U}_i\}_{i=1}^k,f)&=\inf\{s:W^{{\bf a},s}_{\{\mathcal{U}_i\}_{i=1}^k}(Z,f)=0\}\\&=\sup\{s:W^{{\bf a},s}_{\{\mathcal{U}_i\}_{i=1}^k}(Z,f)=+\infty\}. 
	\end{split}
\end{equation*}
 
\end{de}
If there is no confusion, we omit $T_1,f$ and simply write  $P^{{\bf a}}(Z,\{\mathcal{U}_i\}_{i=1}^k),P_{W}^{{\bf a}}(Z,\{\mathcal{U}_i\}_{i=1}^k)$ for short.
If $Z=X_1$, we also  write $P^{{\bf a}}(\{\mathcal{U}_i\}_{i=1}^k)$ and $P_{W}^{{\bf a}}(\{\mathcal{U}_i\}_{i=1}^k)$ instead of $P^{{\bf a}}(X_1,\{\mathcal{U}_i\}_{i=1}^k)$ and $P_{W}^{{\bf a}}(X_1,\{\mathcal{U}_i\}_{i=1}^k).$

\medskip

For an open cover $\U,$ denote $diam\U=\max_{U\in\U}diam U.$ The relation between weighted topological pressure and local weighted topological pressure is the following:
\begin{thm}\label{2.4} For subset $Z\subset X_1,$ we have
	\begin{equation*}
		\begin{split}
		P^{{\bf a}}(Z)	&=	\sup_{\{\mathcal{U}_i\}_{i=1}^k}P^{{\bf a}}(Z,\{\mathcal{U}_i\}_{i=1}^k)
			=\lim_{\max\limits_{1\leq i\leq k} diam\U_i\to 0}P^{{\bf a}}(Z,\{\mathcal{U}_i\}_{i=1}^k)\\
		&=P_W^{{\bf a}}(Z)=	\sup_{\{\mathcal{U}_i\}_{i=1}^k}P_W^{{\bf a}}(Z,\{\mathcal{U}_i\}_{i=1}^k)
			=\lim_{\max\limits_{1\leq i\leq k} diam\U_i\to 0}P_W^{{\bf a}}(Z,\{\mathcal{U}_i\}_{i=1}^k),
		\end{split}
	\end{equation*}	
where the supremum are taken over all open covers $\U_i$ of $X_i, i=1,\ldots,k.$
\end{thm}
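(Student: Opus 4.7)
The plan is to sandwich each of $P^{\mathbf a}(Z,\{\mathcal U_i\}_{i=1}^k)$ and $P_W^{\mathbf a}(Z,\{\mathcal U_i\}_{i=1}^k)$ between the metric versions $P^{\mathbf a}(Z,\ep)$ and $P_W^{\mathbf a}(Z,\ep)$ through a two-sided geometric comparison between elements of the join $\bigvee_{i=1}^{k}(\tau_{i-1}^{-1}\mathcal U_i)_0^{\lceil(a_1+\ldots+a_i)n\rceil-1}$ and Bowen balls $B_n^{\mathbf a}(x,\ep)$. The intertwining $T_i^j\tau_{i-1}=\tau_{i-1}T_1^j$ (valid since $\tau_{i-1}$ is a composition of factor maps) is what makes these two families of subsets comparable. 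Once the sandwich is available, sending $\max_i\mathrm{diam}(\mathcal U_i)\to 0$ and invoking Proposition \ref{pro2.3} delivers all six equalities at once.

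For the first direction, suppose $\max_{1\le i\le k}\mathrm{diam}(\mathcal U_i)\le\ep$. If $A$ is an element of the join and $x\in A$, then for any $y\in A$ and any admissible $(i,j)$ the points $T_i^j\tau_{i-1}x$ and $T_i^j\tau_{i-1}y$ lie in a common member of $\mathcal U_i$, whose diameter is $\le\ep$; hence $A\subset B_n^{\mathbf a}(x,\ep)$. Every admissible family in the $\{\mathcal U_i\}$-definition is therefore admissible in the $\ep$-definition, and since the two definitions share the same exponential weight, we obtain
\begin{equation*}
P^{\mathbf a}(Z,\ep)\le P^{\mathbf a}(Z,\{\mathcal U_i\}_{i=1}^k)\quad\text{and}\quad P_W^{\mathbf a}(Z,\ep)\le P_W^{\mathbf a}(Z,\{\mathcal U_i\}_{i=1}^k).
\end{equation*}
Conversely, if each $\mathcal U_i$ has Lebesgue number greater than $2\ep$, then every open ball $\{z:d_i(T_i^j\tau_{i-1}x,z)<\ep\}$ is contained in some $U_{i,j}\in\mathcal U_i$, so intersecting the preimages $T_1^{-j}\tau_{i-1}^{-1}U_{i,j}$ places $B_n^{\mathbf a}(x,\ep)$ inside a single join element. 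This yields $P^{\mathbf a}(Z,\{\mathcal U_i\}_{i=1}^k)\le P^{\mathbf a}(Z,\ep)$ and similarly for $P_W^{\mathbf a}$.

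Combining the two sides, for any $\delta>0$ and any $\{\mathcal U_i\}_{i=1}^k$ with $\max_i\mathrm{diam}(\mathcal U_i)\le\delta$,
\begin{equation*}
P^{\mathbf a}(Z,\delta)\le P^{\mathbf a}(Z,\{\mathcal U_i\}_{i=1}^k)\le P^{\mathbf a}(Z),
\end{equation*}
where the upper bound uses the Lebesgue-number inequality applied to any $\ep$ smaller than half the (positive, by compactness) Lebesgue number of $\{\mathcal U_i\}_{i=1}^k$, together with $P^{\mathbf a}(Z,\ep)\le P^{\mathbf a}(Z)$. Letting $\delta\to 0$ squeezes the middle to $P^{\mathbf a}(Z)$, simultaneously producing the supremum equality and the $\max\mathrm{diam}\to 0$ limit equality. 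The identical argument handles $P_W^{\mathbf a}$, and Proposition \ref{pro2.3} provides $P^{\mathbf a}(Z)=P_W^{\mathbf a}(Z)$, tying the two chains together. The one delicate point is the index bookkeeping in both comparisons: one must verify that $(\tau_{i-1}^{-1}\mathcal U_i)_0^{\lceil(a_1+\ldots+a_i)n\rceil-1}=\bigvee_{j=0}^{\lceil(a_1+\ldots+a_i)n\rceil-1}(T_i^j\tau_{i-1})^{-1}\mathcal U_i$ so that the $(i,j)$-range matches precisely the range appearing in the definition of $B_n^{\mathbf a}(x,\ep)$; once this alignment is confirmed, the remaining steps are routine open-cover manipulations.
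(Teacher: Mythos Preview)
Your proposal is correct and follows essentially the same approach as the paper: both argue via the two-sided containment between join elements and $\mathbf a$-weighted Bowen balls (Lebesgue number in one direction, small diameter in the other), then invoke Proposition~\ref{pro2.3} to link $P^{\mathbf a}$ and $P_W^{\mathbf a}$. The only cosmetic slip is using $\max_i\mathrm{diam}(\mathcal U_i)\le\ep$ rather than strict inequality, which is needed since the Bowen ball is defined with $d_i(\cdot,\cdot)<\ep$; otherwise your argument matches the paper's proof with added detail on the intertwining $T_i^j\tau_{i-1}=\tau_{i-1}T_1^j$.
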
 
\begin{proof}
	By Proposition \ref{pro2.3} we have $	P^{{\bf a}}(Z)=P_W^{{\bf a}}(Z).$
We prove the  equality for $P^{{\bf a}}(Z)$, the one for $	P_W^{{\bf a}}(Z)$ is similar. 	Let $\U_i$ be open covers of $X_i$ and the Lebesgue number of $\U_i$ be $\ep_i, 1\leq i\leq k$. Let $\ep<\min_{i}\frac{\ep_i}{2}.$ Then 
	$$P^{{\bf a}}(Z,\{\mathcal{U}_i\}_{i=1}^k)\leq P^{{\bf a}}(Z,\ep).$$
	Hence $$	\sup_{\{\mathcal{U}_i\}_{i=1}^k}P^{{\bf a}}(Z,\{\mathcal{U}_i\}_{i=1}^k)\leq P^{{\bf a}}(Z).$$
	Let $\ep>0.$ If $\max\limits_{1\leq i\leq k} diam\U_i<\ep$, then $$P^{{\bf a}}(Z,\ep)\leq P^{{\bf a}}(Z,\{\mathcal{U}_i\}_{i=1}^k)\leq \sup_{\{\mathcal{U}_i\}_{i=1}^k}P^{{\bf a}}(Z,\{\mathcal{U}_i\}_{i=1}^k).$$
	Hence 
	$$P^{{\bf a}}(Z)\leq \lim_{\max\limits_{1\leq i\leq k} diam\U_i\to 0}P^{{\bf a}}(Z,\{\mathcal{U}_i\}_{i=1}^k)\leq \sup_{\{\mathcal{U}_i\}_{i=1}^k}P^{{\bf a}}(Z,\{\mathcal{U}_i\}_{i=1}^k).$$
\end{proof} 
For $M\in\N,$ $\U_i$ be open cover of $X_i$ and $f\in C(X_1,\R)$, recall $S_Mf=\sum_{i=0}^{M-1}f\circ T_1^i$ and $(\mathcal{U}_i)_0^{M-1}=\bigvee\limits_{l=0}^{M-1}T_i^{-l}\mathcal{U}_i.$
 We have the following proposition:
\begin{prop}\label{propp} Let $M\in \N, Z\subset X_1, f\in C(X_1,\R)$ and $\U_i$ be open covers of $X_i, i=1,\ldots,k$. We have
	$$P^{{\bf a}}(T_1,Z,\{\mathcal{U}_i\}_{i=1}^k,f)=\frac{1}{M}P^{{\bf a}}(T_1^M,Z,\{(\mathcal{U}_i)_0^{M-1}\}_{i=1}^k, S_Mf),$$
	$$P_{W}^{{\bf a}}(T_1,Z,\{\mathcal{U}_i\}_{i=1}^k,f)=\frac{1}{M}P_{W}^{{\bf a}}(T_1^M,Z,\{(\mathcal{U}_i)_0^{M-1}\}_{i=1}^k, S_Mf).$$

\end{prop}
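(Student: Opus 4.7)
The plan is to transfer admissible families between the two sides of each identity, with any discrepancy in the term weights absorbed by a multiplicative constant that washes out when we take the critical exponent of $s$. Writing $a'_i:=a_1+\cdots+a_i$, two preparatory calculations drive the argument. First, using the intertwining $T_i\tau_{i-1}=\tau_{i-1}T_1$ and expanding the inner join, the $T_1^M$-cover at time $n$ appearing in $\Lambda^{{\bf a},s}_{N,\{(\mathcal{U}_i)_0^{M-1}\}}$, namely $\bigvee_{i=1}^k(\tau_{i-1}^{-1}(\mathcal{U}_i)_0^{M-1})_0^{\lceil a'_i n\rceil-1}$, rewrites as $\bigvee_{i=1}^k(\tau_{i-1}^{-1}\mathcal{U}_i)_0^{M\lceil a'_i n\rceil-1}$ (the outer join now using $T_1$-iterates). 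Combined with the elementary inequality $M\lceil x\rceil-\lceil Mx\rceil\leq M-1$, this refines the $T_1$-cover at time $Mn$, namely $\bigvee_i(\tau_{i-1}^{-1}\mathcal{U}_i)_0^{\lceil a'_i Mn\rceil-1}$, by at most $M-1$ iterates per component. Second, $\sum_{l=0}^{\lceil a_1 n\rceil-1}(S_Mf)(T_1^{Ml}x)=S_{M\lceil a_1 n\rceil}f(x)$, which differs from $S_{\lceil a_1 Mn\rceil}f(x)$ by at most $(M-1)\|f\|_\infty$.

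For the inequality $P^{\bf a}(T_1,Z,\{\mathcal{U}_i\},f)\leq\tfrac{1}{M}P^{\bf a}(T_1^M,Z,\{(\mathcal{U}_i)_0^{M-1}\},S_Mf)$, given any admissible $\{(n_j,A_j)\}_j$ for $\Lambda^{{\bf a},s}_{N,\{(\mathcal{U}_i)_0^{M-1}\}}(Z,S_Mf)$, put $m_j:=Mn_j\geq MN$. The $T_1$-cover at time $m_j$ is coarser than the $T_1^M$-cover at time $n_j$, so each $A_j$ (which sits in a finer element) automatically sits in some coarser element, and $\{(m_j,A_j)\}_j$ is admissible for $\Lambda^{{\bf a},s/M}_{MN,\{\mathcal{U}_i\}}(Z,f)$. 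The exponents $-sn_j$ and $-(s/M)m_j$ agree, the Birkhoff parts differ by at most $(M-1)\|f\|_\infty$, and one obtains $\Lambda^{{\bf a},s/M}_{MN,\{\mathcal{U}_i\}}(Z,f)\leq e^{(M-1)\|f\|_\infty/a_1}\,\Lambda^{{\bf a},s}_{N,\{(\mathcal{U}_i)_0^{M-1}\}}(Z,S_Mf)$. Letting $N\to\infty$ and comparing critical exponents in $s$ gives the inequality.

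For the reverse, the naive choice $n_j=\lfloor m_j/M\rfloor$ is not quite enough: the $T_1^M$-cover at $n_j$ may be strictly finer than the $T_1$-cover at $m_j$, and $A_j$ need not sit in any single $T_1^M$-element. The fix is a downward shift: fix $C\in\N$ with $C\geq 1/a_1$ and set $n_j:=\lfloor m_j/M\rfloor-C$. A short computation then shows $M\lceil a'_i n_j\rceil\leq\lceil a'_i m_j\rceil$ for every $i$, so the $T_1^M$-cover at $n_j$ is now coarser than the $T_1$-cover at $m_j$ and admissibility transfers. Starting from admissible $\{(m_j,A_j)\}_j$ for $\Lambda^{{\bf a},s'}_{M(N+C),\{\mathcal{U}_i\}}(Z,f)$, we obtain admissible $\{(n_j,A_j)\}_j$ for $\Lambda^{{\bf a},Ms'}_{N,\{(\mathcal{U}_i)_0^{M-1}\}}(Z,S_Mf)$. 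Since $m_j-Mn_j\in[MC,MC+M-1]$ is uniformly bounded, the new term-weights differ from the old ones only by a multiplicative constant. Letting $N\to\infty$ and comparing critical exponents yields the reverse inequality.

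The $P_W^{\bf a}$ case is handled identically: weights $c_j$ are carried over verbatim onto the new pairs, and since the $A_j$'s themselves are never modified, the covering condition $\sum_j c_j\chi_{A_j}\geq g$ is preserved. The main, and essentially only, obstacle is the sign of the cover refinement in the reverse direction: one must notice that a bounded downward shift of $n_j$ is enough to reverse the refinement and thereby avoid having to split $A_j$ into smaller pieces, which would be delicate for general open covers that are not partitions.
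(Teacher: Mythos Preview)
Your proof is correct and follows essentially the same approach as the paper's: both arguments transfer admissible families between the $T_1$ and $T_1^M$ settings via the identity $\bigvee_i(\tau_{i-1}^{-1}(\mathcal{U}_i)_0^{M-1})_0^{\lceil a'_i n\rceil-1}=\bigvee_i(\tau_{i-1}^{-1}\mathcal{U}_i)_0^{M\lceil a'_i n\rceil-1}$, absorb the ceiling discrepancies into a bounded multiplicative factor, and in the reverse direction use a bounded downward shift (your $n_j=\lfloor m_j/M\rfloor-C$ with $C\geq 1/a_1$, the paper's $\tilde n_j=\lceil n_j/M-1/a_1\rceil-1$) to flip the refinement. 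The only difference is presentational: you package the comparison as an inequality $\Lambda^{{\bf a},s/M}_{\{\mathcal{U}_i\}}\leq C\cdot\Lambda^{{\bf a},s}_{\{(\mathcal{U}_i)_0^{M-1}\}}$ and read off the critical exponents, whereas the paper fixes $s$ below the critical value and shows the other $\Lambda$ is bounded below by $1$.
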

\begin{proof}
	We prove the  equality for $P^{{\bf a}}$, the one for $	P_W^{{\bf a}}$ is similar.
Let $$h=P^{{\bf a}}(T_1,Z,\{\mathcal{U}_i\}_{i=1}^k,f).$$ For any $s<Mh,$ we have
$$\Lambda^{{\bf a},\frac{s}{M}}_{\{\mathcal{U}_i\}_{i=1}^k}(T_1,Z,f)=+\infty,$$ hence there exists  some $N$ such that $$\Lambda^{{\bf a},\frac{s}{M}}_{N,\{\mathcal{U}_i\}_{i=1}^k}(T_1,Z,f)\geq e^{\frac{M||f||}{a_1}}.$$  Now we prove $$\Lambda^{{\bf a},s}_{N,\{(\mathcal{U}_i)_0^{M-1}\}_{i=1}^k}(T_1^M,Z,S_Mf)\geq1.$$ Consider $$\sum_{j}e^{-sn_j+\frac{1}{a_1}\sup_{x\in A_j}\sum_{i=0}^{{\lceil a_1n_j\rceil}-1}(S_Mf)\circ T_1^{Mi}(x)},$$ where
  $n_j\geq N,$  $Z\subset \bigcup_jA_j$ and $A_j$  is a Borel subset of some element of $$ \bigvee_{i=1}^{k}\bigvee_{l=0}^{\lceil(a_1+\ldots+a_i)n_j\rceil-1}T_1^{-Ml}\tau_{i-1}^{-1}(\mathcal{U}_i)_0^{M-1}=\bigvee_{i=1}^{k}\tau_{i-1}^{-1}(\mathcal{U}_i)_0^{\lceil(a_1+\ldots+a_i)n_j\rceil M-1}.$$
  Since $$\bigvee_{i=1}^{k}\tau_{i-1}^{-1}(\mathcal{U}_i)_0^{\lceil(a_1+\ldots+a_i)n_j\rceil M-1}\succeq\bigvee_{i=1}^{k}\tau_{i-1}^{-1}(\mathcal{U}_i)_0^{\lceil(a_1+\ldots+a_i)Mn_j\rceil -1}$$
  and $Mn_j>N,$ by  definition  we have $$\sum_{j}e^{-\frac{s}{M}Mn_j+\frac{1}{a_1}\sup_{x\in A_j}S_{\lceil a_1Mn_j\rceil}f(x)}\geq e^{\frac{M||f||}{a_1}}.$$
  It is easy to check that $$\sum_{j}e^{-\frac{s}{M}Mn_j+\frac{1}{a_1}\sup_{x\in A_j}S_{\lceil a_1Mn_j\rceil}f(x)}\leq \sum_{j}e^{-\frac{s}{M}Mn_j+\frac{M||f||}{a_1}+\frac{1}{a_1}\sup_{x\in A_j}S_{M\lceil a_1n_j\rceil}f(x)}.$$ It follows that 
  $$\Lambda^{{\bf a},s}_{N,\{(\mathcal{U}_i)_0^{M-1}\}_{i=1}^k}(T_1^M,Z,S_Mf)\geq1.$$
 Let $s\to Mh$ we have $$Mh\leq P^{{\bf a}}(T_1^M,Z,\{(\mathcal{U}_i)_0^{M-1}\}_{i=1}^k,S_Mf).$$
  
  Now we prove the opposite direction. Let $$h=P^{{\bf a}}(T_1^M,Z,\{(\mathcal{U}_i)_0^{M-1}\}_{i=1}^k,S_Mf).$$ For any $s<\frac{h}{M},$ we have $$\Lambda^{{\bf a}, Ms}_{\{(\mathcal{U}_i)_0^{M-1}\}_{i=1}^k}(T_1^M,Z,S_Mf)=+\infty.$$ There exists $N_0$ such that $$\Lambda^{{\bf a}, Ms}_{N_0,\{(\mathcal{U}_i)_0^{M-1}\}_{i=1}^k}(T_1^M,Z,S_Mf)\geq e^{Ms(\frac{1}{a_1}+1)+\frac{\lceil a_1M+M\rceil||f||}{a_1}}.$$  Let $N>M(N_0+1+\frac{1}{a_1})$. Now we prove that $$\Lambda^{{\bf a}, s}_{N,\{\mathcal{U}_i\}_{i=1}^k}(T_1,Z,f)\geq1.$$
  Consider $$\sum_{j}e^{-sn_j+\frac{1}{a_1}\sup_{x\in A_j}S_{\lceil a_1n_j\rceil}f(x)},$$ where
  $n_j\geq N,$  $Z\subset \bigcup_jA_j$ and $A_j$  is a Borel subset of some element of $\bigvee\limits_{i=1}^{k}\tau_{i-1}^{-1}(\mathcal{U}_i)_0^{\lceil(a_1+\ldots+a_i)n_j\rceil -1}.$
Let $$\tilde{n}_j=\lceil\frac{n_j}{M}-\frac{1}{a_1}\rceil-1\geq N_0.$$  Note that \begin{equation*}
	\begin{split}
	\bigvee_{i=1}^{k}\tau_{i-1}^{-1}(\mathcal{U}_i)_0^{\lceil(a_1+\ldots+a_i)n_j\rceil -1}&\succeq \bigvee_{i=1}^{k}\tau_{i-1}^{-1}(\mathcal{U}_i)_0^{\lceil(a_1+\ldots+a_i)\tilde{n}_j\rceil M-1}\\
	&=\bigvee_{i=1}^{k}\bigvee_{l=0}^{\lceil(a_1+\ldots+a_i)\tilde{n}_j\rceil -1}T^{-Ml}_1\tau_{i-1}^{-1}(\mathcal{U}_i)_0^{M-1}.
	\end{split}
\end{equation*} 
By definition  we have  $$\sum_{j}e^{-Ms\tilde{n}_j+\frac{1}{a_1}\sup_{x\in A_j}\sum_{i=0}^{{\lceil a_1\tilde{n}_j\rceil}-1}(S_Mf)\circ T_1^{Mi}(x)}\geq e^{Ms(\frac{1}{a_1}+1)+\frac{\lceil a_1M+M\rceil||f||}{a_1}}.$$
It is easy to check that \begin{equation*}
	\begin{split}
	\sum_{j}e^{-Ms\tilde{n}_j+\frac{1}{a_1}\sup_{x\in A_j}S_{M\lceil a_1\tilde{n}_j\rceil}f(x)}&\leq
	 \sum_{j}e^{-Ms(\frac{n_j}{M}-\frac{1}{a_1}-1)+\frac{\lceil a_1M+M\rceil||f||}{a_1}+\frac{1}{a_1}\sup_{x\in A_j}S_{\lceil a_1n_j\rceil}f(x)}\\
	 &=\sum_{j}e^{-sn_j+\frac{1}{a_1}\sup_{x\in A_j}S_{\lceil a_1n_j\rceil}f(x)}e^{Ms(\frac{1}{a_1}+1)+\frac{\lceil a_1M+M\rceil||f||}{a_1}}.	
	\end{split}
\end{equation*}
Hence $$\sum_{j}e^{-sn_j+\frac{1}{a_1}\sup_{x\in A_j}S_{\lceil a_1n_j\rceil}f(x)}\geq 1.$$ It follows that $$\Lambda^{{\bf a}, s}_{N,\{\mathcal{U}_i\}_{i=1}^k}(T_1,Z,f)\geq1.$$
Let $s\to\frac{h}{M}$, we have $$\frac{h}{M}\leq P^{{\bf a}}(T_1,Z,\{\mathcal{U}_i\}_{i=1}^k,f).$$
\end{proof}
\section{lower bound}
In order to prove the lower bound of the variational principle for weighted topological pressure, in \cite{fh} the authors  established  weighted version of Shannon–McMillan–Breiman theorem (\cite[Proposition A.2]{fh}) and weighted version of
Brin–Katok theorem (\cite[Theorem 4.1]{fh}). It was shown in \cite[Proposition 4.2]{fh} that 
$$P^{{\bf a}}(T_1,f)\geq \sum_{i=1}^{k}a_ih_{\tau_{i-1}\mu}(T_i)+\int fd\mu$$ for any  $\mu\in M(X_1,T_1)$ and $f\in C(X_1,\R)$.
In this section, we generalize the above lower bound part of the variational principle, with the notion of local weighted topological pressure. 

We first need some lemmas:
\begin{prop}\cite[Proposition A.2]{fh}\label{p2}
	Let $(X,T)$ be a TDS, $\mathcal{B}$ be the collection of all Borel sets of $X$, $\mu\in M(X,T)$ and $k\geq 1.$	Let $\a_1,\ldots,\a_k\in \mathcal{P}_X$ with $H_{\mu}(\a_i)<\infty,1\leq i\leq k$, and ${\bf a}=(a_1,\ldots,a_k)\in \R^k$ with $a_1>0$ and $a_i\geq 0, i=2,\ldots,k.$ Then
	$$\lim_{N\to +\infty}\frac{1}{N}I_\mu\big(\bigvee_{i=1}^{k}(\a_i)_0^{\lceil(a_1+\ldots+a_i)N\rceil-1}\big)(x)=\sum_{i=1}^{k}a_i\mathbb{E}_\mu(F_i|\mathcal{I}_\mu)(x)$$
	almost everywhere, where
	$$F_i(x)=I_{\mu}\Big(\bigvee_{j=i}^{k}\a_j|\bigvee_{n=1}^\infty T^{-n}(\bigvee_{j=i}^k\a_j)\Big)(x), i=1,\ldots,k$$
	and $\mathcal{I}_\mu=\{B\in \mathcal{B}: \mu(B\Delta T^{-1}B)=0\}$. In particular, if $T$ is ergodic, we have $$\lim_{N\to +\infty}\frac{1}{N}I_\mu\big(\bigvee_{i=1}^{k}(\a_i)_0^{\lceil(a_1+\ldots+a_i)N\rceil-1}\big)(x)=\sum_{i=1}^{k}a_ih_\mu(T,\bigvee_{j=i}^{k}\tau_{j-1}^{-1}\a_j).$$
\end{prop}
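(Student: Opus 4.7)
My plan is to reduce the weighted statement to the classical Shannon--McMillan--Breiman (SMB) theorem applied to each of the composite partitions $\gamma_i:=\bigvee_{j=i}^{k}\alpha_j$, combined with Birkhoff's ergodic theorem to average the resulting pieces. Denote $\xi_N:=\bigvee_{i=1}^{k}(\alpha_i)_0^{\lceil(a_1+\cdots+a_i)N\rceil-1}$, set $L_i:=\lceil(a_1+\cdots+a_i)N\rceil$, $L_0:=0$, and $M_i:=L_i-L_{i-1}$ (so $M_i/N\to a_i$). The key algebraic observation is the telescoping identity
\[
\xi_N\;=\;\bigvee_{l=0}^{L_k-1}T^{-l}\gamma_{i(l)}\;=\;\bigvee_{i=1}^{k}T^{-L_{i-1}}(\gamma_i)_0^{M_i-1},
\]
where $i(l)$ is the unique index with $L_{i(l)-1}\le l<L_{i(l)}$. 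This recasts the weighted join as a concatenation of standard orbit blocks for the decreasing family $\gamma_1\succeq\gamma_2\succeq\cdots\succeq\gamma_k$, with the $i$-th block of length $M_i$.

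I would then apply the chain rule for the information function in the future-first order to write
\[
I_\mu(\xi_N)(x)=\sum_{l=0}^{L_k-1}I_\mu\Bigl(\gamma_{i(l)}\,\Big|\,\bigvee_{p=1}^{L_k-1-l}T^{-p}\gamma_{i(l+p)}\Bigr)(T^l x),
\]
and aim to show that, for $l$ in the bulk of $[L_{i-1},L_i-1]$, the $l$-th summand is close (a.e.\ and in $L^1$) to $F_i(T^l x)$. Granted this replacement, Birkhoff's theorem applied to each $F_i$ yields $\frac{1}{N}\sum_{l=L_{i-1}}^{L_i-1}F_i(T^l x)\to a_i\,\mathbb{E}_\mu(F_i|\mathcal{I}_\mu)(x)$ almost everywhere (using $L_i/N\to a_1+\cdots+a_i$ and $L_{i-1}/N\to a_1+\cdots+a_{i-1}$), and summing over $i$ gives the claimed limit $\sum_{i=1}^{k}a_i\,\mathbb{E}_\mu(F_i|\mathcal{I}_\mu)(x)$.

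The main obstacle is the replacement step. The conditioning $\sigma$-algebra in the summand is sandwiched between $\bigvee_{p=1}^{L_i-1-l}T^{-p}\gamma_i$ and $\bigvee_{p=1}^{\infty}T^{-p}\gamma_i$, since $\gamma_{i(l+p)}$ is coarser than $\gamma_i$ once $p\ge L_i-l$. As $L_i-l\to\infty$, both bounding $\sigma$-algebras refine toward the tail $\bigvee_{p\ge1}T^{-p}\gamma_i$, so by reverse martingale convergence (using the $L^1$ bound on information functions, valid because $H_\mu(\gamma_i)\le\sum_{j\ge i}H_\mu(\alpha_j)<\infty$) the $l$-th summand tends pointwise and in $L^1$ to $F_i$. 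The subsequent summation against time shifts $T^l$ and passage to Cesàro averages is justified by a Maker-type ergodic theorem: if $g_N\to g$ almost everywhere with $\sup_N|g_N|$ dominated by an integrable function, then $\frac{1}{N}\sum_{l=0}^{N-1}g_{N-l}\circ T^l\to \mathbb{E}_\mu(g|\mathcal{I}_\mu)$ almost everywhere. This single tool simultaneously handles the varying time shift and the vanishing of the boundary-layer indices (those $l$ with $L_i-l$ bounded).

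The ergodic specialization is then immediate: when $T$ is ergodic, $\mathcal{I}_\mu$ is trivial, so $\mathbb{E}_\mu(F_i|\mathcal{I}_\mu)=\int F_i\,d\mu=H_\mu\bigl(\gamma_i\big|\bigvee_{n\ge1}T^{-n}\gamma_i\bigr)=h_\mu(T,\gamma_i)$ by Rokhlin's formula, recovering the stated ergodic identity.
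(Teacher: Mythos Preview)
The paper does not prove this proposition; it is quoted from \cite[Proposition~A.2]{fh} and used as a black box, so there is no in-paper argument to compare against. Your outline is the standard route to such a weighted Shannon--McMillan--Breiman statement and is, in spirit, what Feng and Huang carry out: the telescoping identity $\xi_N=\bigvee_i T^{-L_{i-1}}(\gamma_i)_0^{M_i-1}$, the time-wise chain rule for $I_\mu$, convergence of the conditional information to the tail information $F_i$, and a Maker-type averaging. The ergodic specialization via Rokhlin's formula is also correct.

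One step is underargued. For $l$ in the $i$-th block the conditioning $\sigma$-algebra $\mathcal C_{l,N}$ is indeed sandwiched between $\mathcal A_q:=\bigvee_{p=1}^{q}T^{-p}\gamma_i$ (with $q=L_i-1-l$) and the tail $\mathcal T_i:=\bigvee_{p\ge1}T^{-p}\gamma_i$. But the family $\{\mathcal C_{l,N}\}$ is \emph{not nested} in either index, so neither forward nor reverse martingale convergence directly yields almost-everywhere convergence $I_\mu(\gamma_i\mid\mathcal C_{l,N})\to F_i$: a sandwich of $\sigma$-algebras does not sandwich the information functions pointwise, because $I_\mu(\gamma_i\mid\cdot)$ is not monotone in the conditioning $\sigma$-algebra. (Also, since $\mathcal A_q\uparrow\mathcal T_i$, the relevant convergence for $I_\mu(\gamma_i\mid\mathcal A_q)$ is \emph{forward} martingale convergence, not reverse.) Relatedly, the summand at position $l$ depends on $N$ through all of $L_i,\dots,L_k$, not through a single index, so the one-parameter Maker theorem you quote does not apply as stated. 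The customary repair is to split the $l$-th summand as $f_q+\bigl(I_\mu(\gamma_i\mid\mathcal C_{l,N})-f_q\bigr)$ with $f_q:=I_\mu(\gamma_i\mid\mathcal A_q)$: the first piece is a genuine single-index sequence with $f_q\to F_i$ a.e.\ and $\sup_q f_q\in L^1$ (Chung's lemma), so Maker applies after a short telescoping to pass from sums over $[0,L_i-1]$ and $[0,L_{i-1}-1]$ to the block $[L_{i-1},L_i-1]$; the second piece is shown to have Ces\`aro averages tending to $0$ a.e.\ by combining the entropy sandwich $H_\mu(\gamma_i\mid\mathcal T_i)\le H_\mu(\gamma_i\mid\mathcal C_{l,N})\le H_\mu(\gamma_i\mid\mathcal A_q)\to H_\mu(\gamma_i\mid\mathcal T_i)$ with a maximal-inequality argument. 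These refinements are routine, but your sketch as written elides them.
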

\begin{lem}\label{pp}
	Let $(X,T)$ be a TDS, $\a=\{A_1,\ldots,A_m\}\in \mathcal{P}_X,$  $\mu\in M(X,T)$ and $\delta>0$. Then  there exists  an open cover $\U=\{U_0,U_1,\ldots,U_m\}$ such that
	\begin{enumerate}
		\item $\mu(U_0)<\delta.$
		\item $\mu(A_i\Delta U_i)<\delta,1\leq i\leq m.$
		\item $U_1,\ldots,U_m$ are pairwise disjoint.
	\end{enumerate} 
\end{lem}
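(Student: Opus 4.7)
The plan is to exploit the regularity of the Borel probability measure $\mu$ on the compact metric space $X$ together with a standard ``shrink-then-fatten'' maneuver: first shrink each $A_i$ down to a compact set $K_i$, then re-expand $K_i$ inside an open set $V_i$ whose measure only slightly exceeds that of $A_i$, and finally use the disjointness of the compact sets $K_i$ to arrange pairwise disjointness of the final open sets. The role of $U_0$ is simply to absorb the small leftover where $\bigcup_{i=1}^m U_i$ fails to cover $X$.

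Concretely, set $\eta = \delta/(3m)$. By inner and outer regularity of $\mu$, choose for each $1\le i\le m$ a compact $K_i\subset A_i$ with $\mu(A_i\setminus K_i)<\eta$ and an open $V_i\supset A_i$ with $\mu(V_i\setminus A_i)<\eta$. The sets $K_1,\ldots,K_m$ are pairwise disjoint compacts in a metric space, so their pairwise distances (when both are nonempty) are strictly positive; choose $r>0$ smaller than a third of the minimum such distance, and let $W_i$ be the open $r$-neighborhood of $K_i$, with the convention $W_i=\emptyset$ when $K_i=\emptyset$. Then $W_1,\ldots,W_m$ are pairwise disjoint open sets with $K_i\subset W_i$. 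Define
\begin{equation*}
U_i := V_i\cap W_i, \qquad 1\le i\le m.
\end{equation*}
Each $U_i$ is open and satisfies $K_i\subset U_i\subset V_i$, so $\mu(A_i\setminus U_i)\le \mu(A_i\setminus K_i)<\eta$ and $\mu(U_i\setminus A_i)\le \mu(V_i\setminus A_i)<\eta$, hence $\mu(A_i\Delta U_i)<2\eta<\delta$; pairwise disjointness of $U_1,\ldots,U_m$ is inherited from that of $W_1,\ldots,W_m$.

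Finally, let $C := X\setminus \bigcup_{i=1}^m U_i$, a closed set. Since the $U_i$ are disjoint and $\mu(U_i)\ge \mu(A_i\cap U_i)=\mu(A_i)-\mu(A_i\setminus U_i)$, one has
\begin{equation*}
\mu(C) = 1 - \sum_{i=1}^m \mu(U_i) \le \sum_{i=1}^m \mu(A_i\setminus U_i) < m\eta = \delta/3,
\end{equation*}
so outer regularity provides an open $U_0\supset C$ with $\mu(U_0)<\delta$. Then $\{U_0,U_1,\ldots,U_m\}$ is an open cover of $X$ satisfying all three required conditions. The only slightly delicate point is the simultaneous control of the pairwise disjointness of $U_1,\ldots,U_m$ and the symmetric-difference bound $\mu(A_i\Delta U_i)<\delta$; the intersection $U_i=V_i\cap W_i$ is what handles both, since $W_i$ enforces disjointness while $V_i$, together with the inclusion $K_i\subset U_i$, controls the measure of the symmetric difference.
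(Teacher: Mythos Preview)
Your proof is correct and follows essentially the same ``shrink to compacts, re-expand to disjoint opens, absorb the remainder in $U_0$'' strategy as the paper. The only difference is that the paper skips your outer-regularity step (the sets $V_i$): once the $U_i$ are pairwise disjoint open neighborhoods of the $K_i$, any point of $U_i\setminus A_i$ lies in some $A_j$ with $j\neq i$ and hence in $A_j\setminus K_j$ (since $K_j\subset U_j$ is disjoint from $U_i$), giving $A_i\Delta U_i\subset\bigcup_{j=1}^m(A_j\setminus K_j)$ directly.
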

\begin{proof}
	We can find compact sets $K_i\subset A_i$  with $\mu(A_i-K_i)< \frac{\delta}{m}, 1\leq i\leq m.$  Since $K_1,\ldots,K_m$ are pairwise disjoint, we can find open sets 
	$K_i\subset U_i$ such that $U_1,\ldots,U_m$ are pairwise disjoint. It is easy to see that $\mu(\bigcup_{i=1}^mU_i)>1-\delta.$ Hence we can find an open set $U_0$
	such that $(\bigcup_{i=1}^mU_i)^c\subset U_0$ and $\mu(U_0)<\delta.$ It is easy to see that $A_i\Delta U_i\subset\bigcup_{i=1}^m(A_i-K_i)$ and hence $\mu(A_i\Delta U_i)<\delta,1\leq i\leq m.$
\end{proof}

For a finite set $M$, denote $|M|$ the cardinality of $M.$ The following theorem is the main result of this section:
\begin{thm}\label{main}
	Let  $\a_i\in \mathcal{P}_{X_i},1\leq i\leq k$ and $\mu\in M(X_1,T_1)$. Then for any $\ep>0$, there exist open covers $\U_i$  of $X_i$ such that $|\U_i|=|\a_i|+1$, $\tau_{i-1}\mu(\U_i\Delta\a_i)<\ep,1\leq i\leq k$ and
	\begin{equation*}
		\begin{split}
			P^{{\bf a}}(supp\mu,\{\mathcal{U}_i\}_{i=1}^k,f)
			\geq\sum_{i=1}^ka_ih_\mu(T_1,\bigvee_{j=i}^{k}\tau_{j-1}^{-1}\a_j)+\int fd\mu-\ep.	
		\end{split}
	\end{equation*} 	
\end{thm}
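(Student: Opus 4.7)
The plan is to build open covers $\U_i$ via Lemma \ref{pp} that closely imitate the given partitions $\a_i$, then bound $P^{\mathbf{a}}(\mathrm{supp}\,\mu,\{\U_i\}_{i=1}^k,f)$ from below by combining Proposition \ref{p2} with Birkhoff's theorem on a positive-measure ``typicality'' set. I work through the $W$-formulation, using that $P^{\mathbf{a}}\geq P_W^{\mathbf{a}}$ (taking $c_j\equiv 1$ in the $W$-definition recovers the $\Lambda$-definition), so a lower bound on $P_W^{\mathbf{a}}$ transfers to $P^{\mathbf{a}}$. After reducing to the ergodic case (the general case follows from ergodic decomposition plus monotonicity of $P^{\mathbf{a}}$ in the target set), I assume $\mu$ is ergodic.

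First, apply Lemma \ref{pp} to each $(X_i,\tau_{i-1}\mu,\a_i)$ with a small parameter $\d>0$ (fixed at the end as a function of $\ep$, $\|f\|$, and $M=\max_i|\a_i|$) to obtain $\U_i=\{U_0^i,U_1^i,\ldots,U_{m_i}^i\}$ with $\tau_{i-1}\mu(U_0^i)<\d$, $\tau_{i-1}\mu(A_l^i\Delta U_l^i)<\d$ for $l\geq 1$, and the $U_l^i$ ($l\geq 1$) pairwise disjoint; for sufficiently small $\d$ this already delivers $|\U_i|=|\a_i|+1$ and $\tau_{i-1}\mu(\U_i\Delta\a_i)<\ep$. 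Next, applying Egoroff to Proposition \ref{p2}, to the Birkhoff average of $f$, and to the Birkhoff averages of the indicators $\chi_{\tau_{i-1}^{-1}U_0^i}$ and $\chi_{\tau_{i-1}^{-1}(A_k^i\Delta U_k^i)}$, I extract $N_0\in\N$ and a set $X^*\subset\mathrm{supp}\,\mu$ with $\mu(X^*)>1/2$ such that, with $H=\sum_i a_ih_\mu(T_1,\bigvee_{j=i}^k\tau_{j-1}^{-1}\a_j)$, for every $x\in X^*$ and $n\geq N_0$: (a) the atom $\xi_n(x)$ of $\bigvee_i(\tau_{i-1}^{-1}\a_i)_0^{\lceil(a_1+\ldots+a_i)n\rceil-1}$ satisfies $\mu(\xi_n(x))\leq e^{-n(H-\ep/6)}$; (b) $\frac{1}{a_1}S_{\lceil a_1 n\rceil}f(x)\geq n\int f\,d\mu-n\ep/6-C_f$ for a constant $C_f$ depending only on $f$ and $a_1$; and (c) the number of ``ambiguous'' coordinates $(i,l)$ with $T_i^l\tau_{i-1}x\in U_0^i$ or $T_i^l\tau_{i-1}x\in A_k^i\Delta U_k^i$ for some $k$ is at most $C_1\d n$.

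The key technical estimate is that, provided $\d$ is chosen so that $C_1\d\log M<\ep/6$,
\[
\mu(V\cap X^*)\leq e^{-n(H-\ep/3)}
\]
for every $V\in\bigvee_i(\tau_{i-1}^{-1}\U_i)_0^{\lceil(a_1+\ldots+a_i)n\rceil-1}$ and $n\geq N_0$. Indeed, for $x\in V\cap X^*$, each non-ambiguous coordinate $(i,l)$ forces the $\a_i$-label of $T_i^l\tau_{i-1}x$ to equal the $\U_i$-label prescribed by $V$ (since $U_l^i\setminus(A_l^i\Delta U_l^i)\subset A_l^i$); by (c) there are at most $C_1\d n$ ambiguous coordinates, each contributing at most $M$ possibilities, so $V\cap X^*$ meets at most $M^{C_1\d n}$ atoms of $\xi_n$, each of $\mu$-mass $\leq e^{-n(H-\ep/6)}$ by (a).

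Finally, set $s=H+\int f\,d\mu-\ep$. For any admissible family $\{(n_j,A_j,c_j)\}_j$ with $\sum_j c_j\chi_{A_j}\geq\chi_{X^*}$, $n_j\geq N\geq\max\{N_0,2C_f/\ep\}$, and each $A_j$ Borel in some element of the joined cover, integration over $X^*$ gives $\sum_j c_j\mu(A_j\cap X^*)\geq\mu(X^*)>1/2$. The key estimate combined with (b) evaluated at a point of $A_j\cap X^*$ (which one may assume nonempty, else $A_j$ is redundant) produces $\mu(A_j\cap X^*)\leq e^{-sn_j+\frac{1}{a_1}\sup_{A_j}S_{\lceil a_1 n_j\rceil}f}$, so $\sum_j c_j e^{-sn_j+\frac{1}{a_1}\sup_{A_j}S_{\lceil a_1 n_j\rceil}f}\geq 1/2>0$, whence $P_W^{\mathbf{a}}(X^*,\{\U_i\},f)\geq s$. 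Monotonicity in $Z$ and $P^{\mathbf{a}}\geq P_W^{\mathbf{a}}$ then yield the claim. The main obstacle is the counting in the key estimate: $\d$ must be chosen \emph{quantitatively}---depending only on $\ep$, $|\a_i|$, and $\|f\|$---so that the combinatorial factor $M^{C_1\d n}$ is absorbed by the SMB decay uniformly in $n$ and in $V$, while simultaneously keeping $\mu(X^*)$ bounded away from zero so that the integration argument still produces a positive lower bound.
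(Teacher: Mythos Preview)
Your overall strategy --- build $\U_i$ via Lemma~\ref{pp}, combine the weighted SMB theorem (Proposition~\ref{p2}) with Birkhoff averages on indicators of the ``bad'' sets, and run a counting argument against any admissible cover --- is exactly the paper's. But two steps, as written, do not go through.

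\textbf{The ergodic reduction loses the side condition.} If you first pick an ergodic component $\theta$ of $\mu$ and only then build $\U_i$ from $\theta$, you control $\tau_{i-1}\theta(\U_i\Delta\a_i)$, not $\tau_{i-1}\mu(\U_i\Delta\a_i)$; monotonicity of $P^{\bf a}$ in the target set rescues the pressure inequality but says nothing about the closeness clause in the statement. The paper reverses the order: it builds $\U_i$ from $\mu$ first (with the sharpened bound $\tau_{i-1}\mu(U_0^i)<\d^2$), introduces the auxiliary partitions $\beta_i=\{(\bigcup_{l\geq1}U_l^i)^c,U_1^i,\ldots,U_{m_i}^i\}$, and \emph{then} selects an ergodic $\theta$ lying simultaneously in $\{\theta:I_\theta>I_\mu-\ep\}$ (of $m$-measure $>k\d$ by an above-average argument, since $I_\theta$ is bounded) and in $\bigcap_i\{\theta:\theta(\tau_{i-1}^{-1}U_0^i)<\d\}$ (each factor of $m$-measure $>1-\d$ by Markov, from $\mu(\tau_{i-1}^{-1}U_0^i)<\d^2$). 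This is precisely what makes the Birkhoff input available for $\theta$ while the $\U_i$ already satisfy the $\mu$-closeness condition.

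\textbf{The atom count is understated.} Your claim ``$V\cap X^*$ meets at most $M^{C_1\d n}$ atoms of $\xi_n$'' does not follow from (c), because the set of ambiguous coordinates depends on the point $x$, not only on $V$: different $x\in V\cap X^*$ can have their $\leq C_1\d n$ bad positions in entirely different places, so the number of $\a$-atoms hit is in general of order $\sum_{j\leq C_1\d n}\binom{Cn}{j}M^j$ rather than $M^{C_1\d n}$. This is still $e^{o_\d(1)\cdot n}$ and hence absorbable, but you have not supplied the Stirling-type estimate. The paper sidesteps the issue by running SMB with $\beta_i$ instead of $\a_i$: since each $U_l^i$ ($l\geq1$) is itself a $\beta_i$-cell, any coordinate where the cover element prescribes a label $\geq1$ pins the $\beta_i$-label for \emph{every} point of $A_j$, so the only ambiguity lies at coordinates where the label is $0$ --- a set determined by $A_j$ alone, whose size is bounded via any single $y\in A_j\cap B_1$. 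The cost is one application of \cite[Lemma~4.15]{wal} to pass from $h_\mu(T_1,\bigvee_{j\geq i}\tau_{j-1}^{-1}\beta_j)$ back to $h_\mu(T_1,\bigvee_{j\geq i}\tau_{j-1}^{-1}\a_j)$.
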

\begin{proof}
	step 1:   reduce to an ergodic measure.
	
	Assume $\a_i=\{A_1^i,\ldots,A_{m_i}^i\},1\leq i\leq k$. Set $M=\max_{1\leq i\leq k}m_i.$ By   \cite[Lemma 4.15]{wal},  there exists $\delta_1$ such that if $\a,\beta\in \mathcal{P}_{X_1}, |\a|,|\beta|\leq (M+1)^k$ and $\mu(\a\Delta\beta)<\delta_1,$ then $$|h_\mu(T_1,\a)-h_\mu(T_1,\beta)|<\ep.$$ Choose $\delta>0$ such that $$\delta<\frac{1}{k}\min\{\frac{\ep}{k\log (M+1)\sum_{i=1}^ka_i+||f||+\ep}, \frac{\ep}{2\log(M+1)}, \frac{\delta_1}{M+1},(\frac{\ep}{M+1})^{\frac{1}{2}},1\}.$$
	By Lemma \ref{pp}, for each $1\leq i\leq k$ there exists  an open cover $\U_i=\{U_0^i,U_1^i,\ldots,U_{m_i}^i\}$ such that
	\begin{itemize}
		\item $\tau_{i-1}\mu(U_0^i)<\delta^2.$
		\item $\tau_{i-1}\mu(A_l^i\Delta U_l^i)<\delta^2,1\leq l\leq m_i.$
		\item $U_1^i,\ldots,U_{m_i}^i$ are pairwise disjoint.
	\end{itemize}
	It is easy to see that $\tau_{i-1}\mu(\U_i\Delta\a_i)<\ep.$ Let $$\beta_i=\{(\bigcup_{l=1}^{m_i}U_l^i)^c,U_1^i,\ldots,U_{m_i}^i\}\in \mathcal{P}_{X_i}.$$
	Then $$\tau_{i-1}\mu(\a_i\Delta\beta_i)<(M+1)\delta$$ and hence $$\mu\Big((\bigvee_{j=i}^{k}\tau_{j-1}^{-1}\a_j)\Delta(\bigvee_{j=i}^{k}\tau_{j-1}^{-1}\beta_j)\Big)<\delta_1.$$ It follows that 
	\begin{equation}\label{eq1}
		\sum_{i=1}^ka_ih_\mu(T_1,\bigvee_{j=i}^{k}\tau_{j-1}^{-1}\beta_j)>\sum_{i=1}^ka_ih_\mu(T_1,\bigvee_{j=i}^{k}\tau_{j-1}^{-1}\a_j)-\sum_{i=1}^ka_i\ep.
	\end{equation} Let $$\mu=\int_{M^e(X_1,T_1)}\theta dm(\theta)$$ be the ergodic decomposition of $\mu$.
	Denote $$I_\mu=\sum_{i=1}^ka_ih_\mu(T_1,\bigvee_{j=i}^{k}\tau_{j-1}^{-1}\beta_j)+\int fd\mu$$ and 
	$$I_\theta=\sum_{i=1}^ka_ih_\theta(T_1,\bigvee_{j=i}^{k}\tau_{j-1}^{-1}\beta_j)+\int fd\theta$$ for $\theta\in M^e(X_1,T_1)$. Then 
	$$I_\mu=\int_{M^e(X_1,T_1)}I_\theta dm(\theta)=\int_{\{\theta:I_\theta>I_\mu-\ep\}}I_\theta dm(\theta)+\int_{\{\theta:I_\theta\leq I_\mu-\ep\}}I_\theta dm(\theta).$$
	Since $$|\bigvee_{j=i}^{k}\tau_{j-1}^{-1}\beta_j|\leq (M+1)^k,$$ we have $$I_\theta\leq k\log (M+1)\sum_{i=1}^ka_i+||f||.$$
	Hence $$\int_{\{\theta:I_\theta>I_\mu-\ep\}}I_\theta dm(\theta)\leq \big(k\log (M+1)\sum_{i=1}^ka_i+||f||\big)m(\{\theta:I_\theta>I_\mu-\ep\})$$
	and
	$$\int_{\{\theta:I_\theta\leq I_\mu-\ep\}}I_\theta dm(\theta)\leq (I_\mu-\ep) m(\{\theta:I_\theta\leq I_\mu-\ep\}).$$
	We get $$m(\{\theta:I_\theta>I_\mu-\ep\})\geq \frac{\ep}{k\log (M+1)\sum_{i=1}^ka_i+||f||+\ep}> k\delta.$$
	Since for each $1\leq i\leq k,$ $\tau_{i-1}\mu(U_0^i)<\delta^2,$ it is easy to see that  $$m(\{\theta:\theta(\tau_{i-1}^{-1}U_0^i)<\delta\})>1-\delta.$$ Hence there exists $\theta\in M^e(X_1,T_1)$
	such that \begin{equation}\label{eq2}
		\theta(\tau_{i-1}^{-1}U_0^i)<\delta,\ \forall\ 1\leq i\leq k 
	\end{equation} and
	\begin{equation}\label{eq3}
		\sum_{i=1}^ka_ih_\theta(T_1,\bigvee_{j=i}^{k}\tau_{j-1}^{-1}\beta_j)+\int fd\theta>\sum_{i=1}^ka_ih_\mu(T_1,\bigvee_{j=i}^{k}\tau_{j-1}^{-1}\beta_j)+\int fd\mu-\ep.
	\end{equation}

	step 2: deal with the ergodic measure $\theta$ and finish the proof.
	
	Here we use the method in \cite{pp} (please also see \cite{zczy}).
	For $y\in supp\theta, 1\leq i\leq k$ and $n\in\N,$ write $t^i_n(y)$ for the number of integers $0\leq j\leq \lceil(a_1+\ldots+a_i)n\rceil-1$ such that $$T^j_i\tau_{i-1}y\in U_0^i.$$   By the
	Birkhoff Ergodic Theorem and (\ref{eq2}),  there exist $N_1\in\N$ and  $B_1\subset supp\theta$ such that $\theta(B_1)>\frac{5}{6}$ and for all $y\in B_1,n\geq N_1$ and $1\leq i\leq k,$
	$$t^i_n(y)< 2\delta\lceil(a_1+\ldots+a_i)n\rceil.$$
	By  Proposition \ref{p2}, there exist $N_2\in\N$ and $B_2\subset supp\theta$ such that $\theta(B_2)>\frac{5}{6}$ and for all $y\in B_2,n\geq N_2,$
	$$\theta\big(\bigvee_{i=1}^{k}(\tau_{i-1}^{-1}\beta_i)_0^{\lceil(a_1+\ldots+a_i)n\rceil-1}(y)\big)\leq e^{-n\sum_{i=1}^ka_ih_\theta(T_1,\bigvee_{j=i}^{k}\tau_{j-1}^{-1}\beta_j)+n\ep}.$$
	By the Birkhoff Ergodic Theorem there exist $N_3\in\N$ and  $B_3\subset supp\theta$ such that $\theta(B_3)>\frac{5}{6}$ and for all $y\in B_3,n\geq N_3,$
	$$|\frac{1}{a_1n}S_{\lceil a_1n\rceil}f(y)-\int fd\theta|<\ep.$$
	Let $N=\max\{N_1,N_2,N_3\}$ and $B=B_1\cap B_2\cap B_3.$ We have $\theta(B)>\frac{1}{2}.$
	Let $$\lambda<\sum_{i=1}^ka_ih_\theta(T_1,\bigvee_{j=i}^{k}\tau_{j-1}^{-1}\beta_j)-\sum_{i=1}^k(a_1+\ldots+a_i)\ep-2\ep+\int fd\theta.$$
	We will prove $\Lambda^{{\bf a},\lambda}_{N,\{\mathcal{U}_i\}_{i=1}^k}(supp\theta)\geq\frac{1}{2}e^{-k\ep}.$
	Consider$$\sum_{j}e^{-\lambda n_j+\frac{1}{a_1}\sup_{x\in A_j}S_{\lceil a_1n_j\rceil}f(x)},$$
	where  $n_j\geq N,$  $A_j$ is a Borel subset of some element of $ \bigvee\limits_{i=1}^{k}(\tau_{i-1}^{-1}\mathcal{U}_i)_0^{\lceil(a_1+\ldots+a_i)n_j\rceil-1}$ and $supp\theta\subset \bigcup_jA_j$.
	For $l\geq N$, let $$\Gamma_l=\{(n_j,A_j):n_j=l, A_j\cap B\neq\emptyset\}$$  and $$Y_l=\bigcup_{(n_j,A_j)\in\Gamma_l}A_j.$$
	Denote $$L=\{C\in \bigvee_{i=1}^{k}(\tau_{i-1}^{-1}\beta_i)_0^{\lceil(a_1+\ldots+a_i)l\rceil-1}:C\cap Y_l\cap B\neq\emptyset\}.$$
	Note that $$\sum_{C\in L}\theta(C)\geq \theta(Y_l\cap B)$$ and $C\cap B_2\neq\emptyset$ for $C\in L.$
	We have $$|L|\geq \theta(Y_l\cap B)e^{l\sum_{i=1}^ka_ih_\theta(T_1,\bigvee_{j=i}^{k}\tau_{j-1}^{-1}\beta_j)-l\ep}.$$
	For $(n_j,A_j)\in \Gamma_l,$ by the definition of $B_1$ we have
	\begin{equation*}
		\begin{split}
			&|\{C\in\bigvee_{i=1}^{k}(\tau_{i-1}^{-1}\beta_i)_0^{\lceil(a_1+\ldots+a_i)l\rceil-1}:C\cap A_j\cap B\neq\emptyset\}|\\
			&\leq \prod_{i=1}^k(M+1)^{2\delta\lceil(a_1+\ldots+a_i)l\rceil}
			\leq e^{l\ep\sum_{i=1}^k(a_1+\ldots+a_i)+k\ep},
		\end{split}
	\end{equation*}
	the second inequality holds because $2\delta\log(M+1)<\ep.$
	We have $$|\Gamma_l|\geq\theta(Y_l\cap B)e^{l\sum_{i=1}^ka_ih_\theta(T_1,\bigvee_{j=i}^{k}\tau_{j-1}^{-1}\beta_j)-l\ep-l\ep\sum_{i=1}^k(a_1+\ldots+a_i)-k\ep}.$$
	For $(n_j,A_j)\in \Gamma_l, A_j\cap B_3\neq\emptyset,$ we have
	\begin{equation*}
		\begin{split}
			&\sum_{j}e^{-\lambda n_j+\frac{1}{a_1}\sup_{x\in A_j}S_{\lceil a_1n_j\rceil}f(x)}
			\\&\geq\sum_{l=N}^\infty\sum_{(n_j,A_j)\in\Gamma_l}e^{-\lambda l+\frac{1}{a_1}\sup_{x\in A_j}S_{\lceil a_1l\rceil}f(x)}\\
			&\geq\sum_{l=N}^\infty\sum_{(n_j,A_j)\in\Gamma_l}e^{-\lambda l+l\int fd\theta-l\ep}	\\
			&\geq\sum_{l=N}^\infty \theta(Y_l\cap B)e^{-\lambda l+l\int fd\theta+l\sum_{i=1}^ka_ih_\theta(T_1,\bigvee_{j=i}^{k}\tau_{j-1}^{-1}\beta_j)-2l\ep-l\ep\sum_{i=1}^k(a_1+\ldots+a_i)-k\ep}\\
			&\geq \sum_{l=N}^\infty \theta(Y_l\cap B)e^{-k\ep}\geq\theta(B)e^{-k\ep}\geq \frac{1}{2}e^{-k\ep}.
		\end{split}
	\end{equation*}
	Hence $\Lambda^{{\bf a},\lambda}_{N,\{\mathcal{U}_i\}_{i=1}^k}(supp\theta)\geq\frac{1}{2}e^{-k\ep}.$ It follows that \begin{equation}\label{eq4}
		P^{{\bf a}}(supp\theta,\{\mathcal{U}_i\}_{i=1}^k,f)\geq\sum_{i=1}^ka_ih_\theta(T_1,\bigvee_{j=i}^{k}\tau_{j-1}^{-1}\beta_j)-\sum_{i=1}^k(a_1+\ldots+a_i)\ep-2\ep+\int fd\theta.	
	\end{equation}
	Since $supp\theta\subset supp\mu,$ combining (\ref{eq1}), (\ref{eq3}) and (\ref{eq4}) we finish the proof.
\end{proof}
\begin{rem}
	In \cite{hmry}, the authors proved the following lemma:
	\begin{lem}\cite[Lemma 9]{hmry}
		Let $(X,T)$ be a TDS. For every $M\in\N, \ep>0$, there exists $\delta>0$ such that for every $M$-set measurable covers  $\U=\{U_1,\ldots,U_M\}, \mathcal{V}=\{V_1,\ldots,V_M\}$ of
		$X$ with $\mu(\U\Delta\mathcal{V})<\delta,$ one has
		$$|h^+_\mu(T,\U)-h^+_\mu(T,\mathcal{V})|<\ep.$$
	\end{lem}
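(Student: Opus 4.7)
The plan is to leverage Walters' classical continuity estimate for $h_{\mu}(T,\cdot)$ on partitions of uniformly bounded cardinality, combined with an explicit nearest-neighbor matching between partitions refining $\U$ and $\mathcal{V}$. The first step is a cardinality reduction: for the cover $\U=\{U_1,\ldots,U_M\}$, let $\mathcal{P}^{\U}=\{P_S^{\U}:\emptyset\neq S\subseteq\{1,\ldots,M\}\}$ with $P_S^{\U}=\bigcap_{i\in S}U_i\cap\bigcap_{i\notin S}U_i^c$; this is a partition of $X$ with at most $2^M-1$ nonempty cells. Any $\alpha\succeq\U$ can be coarsened by merging its atoms lying in the same $P_S^{\U}$, producing a partition $\bar\alpha\succeq\U$ with $|\bar\alpha|\leq 2^M$ and $h_{\mu}(T,\bar\alpha)\leq h_{\mu}(T,\alpha)$. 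Hence the infimum defining $h^+_{\mu}(T,\U)$ may be restricted to $\alpha$ with $|\alpha|\leq 2^M$. Applying Walters' Lemma 4.15 with cardinality bound $N=2^M+M$ and tolerance $\ep/2$ produces $\delta_0>0$ such that any two labeled partitions with at most $N$ atoms satisfying $\sum_k\mu(\gamma_k\Delta\gamma'_k)<\delta_0$ obey $|h_{\mu}(T,\gamma)-h_{\mu}(T,\gamma')|<\ep/2$.

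Set $\delta:=\delta_0/2$, interpret $\mu(\U\Delta\mathcal{V})<\delta$ as $\sum_{i=1}^M\mu(U_i\Delta V_i)<\delta$, and fix any $\alpha=\{A_1,\ldots,A_n\}\succeq\U$ with $n\leq 2^M$. For each $j$ choose $i(j)\in\{1,\ldots,M\}$ with $A_j\subseteq U_{i(j)}$, set $B_j:=A_j\cap V_{i(j)}$, and partition the residual $R:=X\setminus\bigsqcup_{j}B_j$ by $\mathcal{V}$ as $R=\bigsqcup_{i=1}^M R_i$ with $R_i\subseteq V_i$. Then $\beta:=\{B_1,\ldots,B_n,R_1,\ldots,R_M\}$ refines $\mathcal{V}$ and has $|\beta|\leq 2^M+M=N$. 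For each fixed $i$ the sets $\{A_j\setminus V_i:i(j)=i\}$ are pairwise disjoint subsets of $U_i\setminus V_i$, hence
$$\sum_{j=1}^n\mu(A_j\Delta B_j)=\sum_{j=1}^n\mu(A_j\setminus V_{i(j)})\leq\sum_{i=1}^M\mu(U_i\setminus V_i)<\delta,$$
which in turn forces $\mu(R)<\delta$.

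Pad $\alpha$ with $M$ empty atoms to form a labeled partition $\tilde\alpha$ of size $N$ matched atom-by-atom to $\beta$ (padding leaves $h_\mu(T,\alpha)$ unchanged); the estimate above yields $\sum_k\mu(\tilde\alpha_k\Delta\beta_k)<2\delta=\delta_0$, so Walters' lemma gives $|h_{\mu}(T,\alpha)-h_{\mu}(T,\beta)|<\ep/2$. Therefore $h^+_{\mu}(T,\mathcal{V})\leq h_{\mu}(T,\beta)< h_{\mu}(T,\alpha)+\ep/2$, and taking the infimum over $\alpha\succeq\U$ with $|\alpha|\leq 2^M$ produces $h^+_{\mu}(T,\mathcal{V})-h^+_{\mu}(T,\U)\leq\ep/2$; swapping the roles of $\U$ and $\mathcal{V}$ closes the argument. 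The main obstacle is keeping $|\beta|$ controlled independently of $\alpha$ while guaranteeing $\beta\succeq\mathcal{V}$: the construction must absorb the residual $R$ (whose measure is only bounded by $\delta$) into a refinement of $\mathcal{V}$ whose size depends only on $M$, so that a \emph{single} $\delta_0$ extracted from Walters' lemma applies uniformly across the infimum. Distributing $R$ into the $M$ extra atoms indexed by $\mathcal{V}$ and padding $\alpha$ with $M$ empty cells is exactly what yields this uniform bound.
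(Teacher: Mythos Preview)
The paper does not prove this lemma; it is quoted verbatim from \cite[Lemma~9]{hmry} and used without argument, so there is no in-paper proof to compare against. Your overall strategy---reduce to partitions of bounded cardinality, build a nearby partition refining $\mathcal{V}$, and invoke Walters' Lemma~4.15---is the natural one and, with one correction, is sound.

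The gap is in your cardinality-reduction step. You assert that the atoms of any $\alpha\succeq\U$ each lie in a single cell $P_S^{\U}$ of the partition generated by $\U$, so that merging cell-by-cell produces a coarsening $\bar\alpha\succeq\U$ with $|\bar\alpha|\leq 2^M$. This is false: $\alpha\succeq\U$ only guarantees $A_j\subseteq U_{i(j)}$ for some index $i(j)$, and such an $A_j$ can meet several cells $P_S^{\U}$ (namely all those with $i(j)\in S$). For a concrete counterexample take $\U=\{U_1,U_2\}$ with $U_1\cap U_2$ and $U_1\setminus U_2$ both nonempty and $\alpha=\{U_1,\,U_2\setminus U_1\}$; the atom $U_1$ straddles two cells of $\mathcal{P}^{\U}$, so your merging rule is undefined.

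The repair is simpler than what you wrote and is precisely Lemma~\ref{l9} of the present paper: merge the atoms of $\alpha$ according to the chosen index $i(j)$ rather than by the cells $P_S^{\U}$. Setting $\bar A_i=\bigcup_{j:\,i(j)=i}A_j$ yields $\bar\alpha=\{\bar A_1,\ldots,\bar A_M\}$ with $\bar A_i\subseteq U_i$, hence $\bar\alpha\succeq\U$, $\bar\alpha\preceq\alpha$, and $|\bar\alpha|\leq M$. With this in place the remainder of your argument---the construction of $B_j=A_j\cap V_{i(j)}$, the splitting of the residual $R$ among the $V_i$, the padding of $\alpha$ by $M$ empty atoms, and the estimate $\sum_k\mu(\tilde\alpha_k\Delta\beta_k)<2\delta$---goes through unchanged, indeed with the sharper bound $N=2M$.
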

	Hence in Theorem \ref{main}, we can further require the open covers $\U_i$ satisfying $$|h^+_{\tau_{i-1}\mu}(T_i,\U_i)-h_{\tau_{i-1}\mu}(T_i,\a_i)|<\ep$$ and 
	$$|\sum_{i=1}^ka_ih^+_\mu(T_1,\bigvee_{j=i}^{k}\tau_{j-1}^{-1}\U_j)-\sum_{i=1}^ka_ih_\mu(T_1,\bigvee_{j=i}^{k}\tau_{j-1}^{-1}\a_j)|<\ep.$$
\end{rem}
From Theorem \ref{main} we can get the lower bound part of the variational principle for weighted topological pressure:
\begin{cor}Let $f\in C(X_1,\R)$. We have
	$$P^{{\bf a}}(T_1,f)\geq\sup_{\mu\in M(X_1,T_1)}\big(\sum_{i=1}^{k}a_ih_{\tau_{i-1}\mu}(T_i)+\int fd\mu\big).$$
\end{cor}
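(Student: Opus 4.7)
The plan is to derive this corollary as a direct consequence of Theorem \ref{main}, with the only supplementary ingredient being classical entropy identities. Fix $\mu\in M(X_1,T_1)$ and $\ep>0$. First I would use the standard variational characterization $h_\nu(T)=\sup_{\a\in\mathcal{P}}h_\nu(T,\a)$ to choose, for each $1\leq i\leq k$, a finite Borel partition $\a_i$ of $X_i$ with $h_{\tau_{i-1}\mu}(T_i,\a_i)>h_{\tau_{i-1}\mu}(T_i)-\ep$. (If some $h_{\tau_{i-1}\mu}(T_i)=+\infty$, replace the bound by an arbitrarily large constant; the same argument below will then force $P^{\bf a}(T_1,f)=+\infty$.)

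Next I would feed the partitions $\a_i$ into Theorem \ref{main}, producing open covers $\U_i$ of $X_i$ with
$$P^{\bf a}(supp\,\mu,\{\U_i\}_{i=1}^k,f)\geq\sum_{i=1}^k a_i\,h_\mu\big(T_1,\bigvee_{j=i}^{k}\tau_{j-1}^{-1}\a_j\big)+\int f\,d\mu-\ep.$$
Two standard facts then collapse the entropy sum to what we want: monotonicity under refinement gives $h_\mu(T_1,\bigvee_{j=i}^{k}\tau_{j-1}^{-1}\a_j)\geq h_\mu(T_1,\tau_{i-1}^{-1}\a_i)$, and the factor-map identity $h_\mu(T_1,\tau_{i-1}^{-1}\a_i)=h_{\tau_{i-1}\mu}(T_i,\a_i)$ combined with the choice of $\a_i$ yields
$$\sum_{i=1}^k a_i\,h_\mu\big(T_1,\bigvee_{j=i}^{k}\tau_{j-1}^{-1}\a_j\big)\geq\sum_{i=1}^k a_i\,h_{\tau_{i-1}\mu}(T_i)-\ep\sum_{i=1}^k a_i.$$

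Finally I would promote the local pressure on $supp\,\mu$ to the global pressure $P^{\bf a}(T_1,f)=P^{\bf a}(X_1,f)$: monotonicity of the Carath\'eodory sums $\Lambda^{{\bf a},s}_N$ in the set variable $Z$ gives $P^{\bf a}(supp\,\mu,\{\U_i\}_{i=1}^k,f)\leq P^{\bf a}(X_1,\{\U_i\}_{i=1}^k,f)$, and Theorem \ref{2.4} gives $P^{\bf a}(X_1,\{\U_i\}_{i=1}^k,f)\leq P^{\bf a}(X_1,f)$. Chaining the inequalities, letting $\ep\to 0$, and taking the supremum over $\mu\in M(X_1,T_1)$ completes the proof. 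I do not anticipate any real obstacle here, since the genuinely hard work is already packaged inside Theorem \ref{main}; the corollary amounts to a clean distillation of it using the classical partition-to-total-entropy passage and the factor-map identity.
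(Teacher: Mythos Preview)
Your proposal is correct and follows essentially the same route as the paper's proof: apply Theorem~\ref{main} to partitions $\a_i$, lift the resulting local pressure bound to $P^{\bf a}(T_1,f)$ via monotonicity in $Z$ and Theorem~\ref{2.4}, and then pass from the partition entropies $h_\mu(T_1,\bigvee_{j\geq i}\tau_{j-1}^{-1}\a_j)$ to $h_{\tau_{i-1}\mu}(T_i)$. The paper simply leaves arbitrary $\a_i$ in place and invokes the supremum at the end, whereas you choose near-optimal $\a_i$ up front and spell out the monotonicity and factor-map identities; these are cosmetic differences only.
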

\begin{proof}
		Let  $\a_i\in \mathcal{P}_{X_i},1\leq i\leq k,$  $\mu\in M(X_1,T_1)$ and $\ep>0$. By Theorem \ref{main} there exist open covers $\U_i$  of $X_i$ such that 
	\begin{equation*}
			P^{{\bf a}}(supp\mu,\{\mathcal{U}_i\}_{i=1}^k,f)
			\geq\sum_{i=1}^ka_ih_\mu(T_1,\bigvee_{j=i}^{k}\tau_{j-1}^{-1}\a_j)+\int fd\mu-\ep.	
	\end{equation*} 
By Theorem \ref{2.4}, we have $$P^{{\bf a}}(T_1,f)\geq P^{{\bf a}}(T_1,\{\mathcal{U}_i\}_{i=1}^k,f)\geq	P^{{\bf a}}(supp\mu,\{\mathcal{U}_i\}_{i=1}^k,f).$$
Since $\a_i$ and $\ep$ are chosen arbitrary, we have 
$$P^{{\bf a}}(T_1,f)\geq\sum_{i=1}^ka_ih_{\tau_{i-1}\mu}(T_i)+\int fd\mu.$$
\end{proof}
\section{upper bound}
In order to prove the upper bound of the variational principle for weighted topological pressure, the authors in \cite{fh} applied the techniques
in geometric measure theory. They introduced the notion of average weighted topological pressure to prove the  dynamical Frostman lemma (\cite[Lemma 3.3]{fh}), which played a key role in the proof of the upper bound part of the variational principle. In this section, we generalize the result of the upper bound part in \cite{fh} to local case.

First we need some lemmas:
\begin{lem}\cite[Proposition 6]{ro}\label{l1}
	Let $\pi:(X,T)\to (Y,S)$ be a factor map between two TDSs and $\U$ be an open cover of $Y$. Then for any $\mu\in M(X,T)$, we have $$h_\mu(T,\pi^{-1}\U)=h_{\pi\mu}(S,\U).$$
\end{lem}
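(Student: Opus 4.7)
The plan is to reduce the identity $h_\mu(T,\pi^{-1}\U)=h_{\pi\mu}(S,\U)$ to a finite-time identity for cover entropies and then pass to the limit. Recall that $h_\nu(R,\mathcal{V})=\lim_{n\to\infty}\tfrac{1}{n}H_\nu(\mathcal{V}_0^{n-1})$, where $H_\nu(\mathcal{V}):=\inf\{H_\nu(\a):\a\in\mathcal{P},\,\a\succeq\mathcal{V}\}$, together with the commutation identity $(\pi^{-1}\U)_0^{n-1}=\pi^{-1}(\U_0^{n-1})$ coming from $\pi\circ T=S\circ\pi$. It therefore suffices to prove
\[
H_\mu(\pi^{-1}\mathcal{W})=H_{\pi\mu}(\mathcal{W})
\]
for every finite open cover $\mathcal{W}$ of $Y$; applying this with $\mathcal{W}=\U_0^{n-1}$, dividing by $n$, and letting $n\to\infty$ then yields the lemma.

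The inequality $H_\mu(\pi^{-1}\mathcal{W})\leq H_{\pi\mu}(\mathcal{W})$ is the easy direction: given any partition $\b=\{B_i\}\in\mathcal{P}_Y$ refining $\mathcal{W}$, the pullback $\pi^{-1}\b=\{\pi^{-1}B_i\}\in\mathcal{P}_X$ refines $\pi^{-1}\mathcal{W}$, and $H_\mu(\pi^{-1}\b)=H_{\pi\mu}(\b)$ since $\mu(\pi^{-1}B_i)=\pi\mu(B_i)$. Taking the infimum over $\b$ gives the bound.

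For the reverse inequality, write $\mathcal{W}=\{W_1,\ldots,W_m\}$ and take $\a\in\mathcal{P}_X$ with $\a\succeq\pi^{-1}\mathcal{W}$. First coarsen: choose an index assignment $\phi$ on atoms of $\a$ with $A\subseteq\pi^{-1}W_{\phi(A)}$ and set $\a'=\{A'_i\}_{i=1}^m$, $A'_i=\bigcup_{\phi(A)=i}A$; then $A'_i\subseteq\pi^{-1}W_i$ and $H_\mu(\a')\leq H_\mu(\a)$, so it suffices to bound $H_\mu(\a')$. Next disintegrate $\mu=\int_Y\mu_y\,d\pi\mu(y)$ along $\pi$, so that each fiber vector $(\mu_y(A'_1),\ldots,\mu_y(A'_m))$ is a probability vector supported on $\{i:y\in W_i\}$ and $\mu(A'_i)=\int\mu_y(A'_i)\,d\pi\mu(y)$. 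The goal is to produce a Borel selection $\psi:Y\to\{1,\ldots,m\}$ with $\psi(y)\in\{i:y\in W_i\}$ such that the induced partition $\b=\{\psi^{-1}(i)\}\in\mathcal{P}_Y$ (automatically refining $\mathcal{W}$) satisfies $H_{\pi\mu}(\b)\leq H_\mu(\a')$.

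The main obstacle is this entropy-matching step. The cleanest path uses Lyapunov's convexity theorem on the vector measure $y\mapsto(\mu_y(A'_i))_{i=1}^m\,d\pi\mu(y)$: its range is convex (when $\pi\mu$ is atomless on the overlap regions), hence the soft marginal vector $(\mu(A'_i))$ is attained by a hard vertex-valued selection $\psi$, and concavity of $-t\log t$ then forces $H_{\pi\mu}(\b)\leq H_\mu(\a')$. Possible atoms of $\pi\mu$ contribute at most an additive $\log m$ which is absorbed by the factor $\tfrac{1}{n}$ when passing to the limit on $\mathcal{W}=\U_0^{n-1}$. This is precisely the argument given in Romagnoli's \cite[Proposition 6]{ro}, to which one can also appeal directly since the statement is quoted verbatim.
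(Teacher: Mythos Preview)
The paper does not supply its own proof of this lemma; it is quoted directly from Romagnoli \cite[Proposition~6]{ro}. Your closing remark --- that one may simply appeal to that reference --- is therefore exactly the paper's approach, and nothing more is required.

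As for the independent argument you sketch: the reduction to a finite-level identity $H_\mu(\pi^{-1}\mathcal{W})=H_{\pi\mu}(\mathcal{W})$ via $(\pi^{-1}\U)_0^{n-1}=\pi^{-1}(\U_0^{n-1})$ is correct, and the easy inequality by pulling back partitions is clean. The Lyapunov/concavity idea for the reverse inequality is also sound in spirit, but your treatment of atoms contains an error. You claim the atomic defect is at most $\log m$ and is killed by the factor $\tfrac{1}{n}$; however, for $\mathcal{W}=\U_0^{n-1}$ the cardinality $m$ is of order $|\U|^n$, so $\tfrac{1}{n}\log m\to\log|\U|\neq 0$, and the argument as written does not close. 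In fact no error term is needed: the set of ``hard'' vectors $(\pi\mu(B_i))_i$ realized by partitions $\b\succeq\mathcal{W}$ has convex hull containing every ``soft'' vector $(\mu(A'_i))_i$ (Lyapunov/Dvoretzky--Wald--Wolfowitz handles the nonatomic part exactly, while on each atom the soft fiber vector is already a convex combination of the admissible vertex assignments), and concavity of the entropy function on the simplex then yields some $\b\succeq\mathcal{W}$ with $H_{\pi\mu}(\b)\le H_\mu(\a')$. With this correction the finite-level identity holds exactly, and the passage to the limit is immediate.
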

\begin{lem}\cite{hmry,hyz,hy}\label{l2}
Let $(X,T)$ be a TDS, $\mu\in M(X,T)$ and  $\U$ be an open cover of $X$.	If $(X,T)$ is invertible, then $$h_{\mu}^+(T,\U)=h_{\mu}(T,\U).$$
\end{lem}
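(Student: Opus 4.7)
The plan is to show both inequalities $h_\mu(T,\U) \leq h_\mu^+(T,\U)$ and $h_\mu^+(T,\U) \leq h_\mu(T,\U)$, with only the second relying on invertibility. The first is essentially a formal consequence of the two definitions: for any $\alpha \in \mathcal{P}_X$ with $\alpha \succeq \U$, the refinement $\alpha_0^{n-1}$ satisfies $\alpha_0^{n-1} \succeq \U_0^{n-1}$, so that $\inf_{\gamma \succeq \U_0^{n-1}}H_\mu(\gamma) \leq H_\mu(\alpha_0^{n-1})$. Dividing by $n$, letting $n\to\infty$, and then taking the infimum over all such $\alpha$ yields $h_\mu(T,\U) \leq h_\mu^+(T,\U)$; no invertibility is needed.

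For the reverse inequality the plan is to use invertibility to build a partition refining $\U$ out of a nearly optimal partition refining $\U_0^{n-1}$, via a Rokhlin tower. Given $\ep>0$, I would fix $n$ large enough to find $\beta \succeq \U_0^{n-1}$ with $H_\mu(\beta) \leq n(h_\mu(T,\U) + \ep)$, which exists by Romagnoli's definition of $h_\mu(T,\U)$. After reducing to the aperiodic part of $T$ (periodic orbits contribute zero entropy rate and can be absorbed into the error), Rokhlin's lemma provides $F \subset X$ with $F, TF, \ldots, T^{n-1}F$ pairwise disjoint and $\mu(\bigsqcup_{j=0}^{n-1}T^jF) > 1-\delta$ for a small $\delta>0$ to be chosen at the end. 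I would then define $\alpha$ to have atoms $T^j(B \cap F)$ for $B \in \beta$ on each level $T^j F$, and on the complement $E = X \setminus \bigsqcup T^jF$ choose any partition refining $\U|_E$. Invertibility enters essentially here: since $\beta \succeq \U_0^{n-1} \succeq T^{-j}\U$ for $0 \leq j \leq n-1$, each set $T^j(B \cap F) \subset T^jB$ lies inside some $U \in \U$, so $\alpha \succeq \U$.

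The crux is the estimate on $H_\mu(\alpha_0^{k-1})$ as $k \to \infty$. The tower structure forces $\alpha(Tx)$ to be determined by $\alpha(x)$ whenever $x, Tx$ both lie strictly inside a single tower column (they share the same $B \in \beta$ at the bottom), so $\alpha_0^{k-1}(x)$ is essentially determined by the $\beta$-atoms at the approximately $k/n$ bottom-visits of the forward orbit of $x$, together with information on the measure-$\delta$ error set $E$ and a tower-position tag. A careful counting argument should then yield
\[
\frac{1}{k}H_\mu(\alpha_0^{k-1}) \leq \frac{1}{n}H_\mu(\beta) + O(\delta \log|\alpha|) + o(1) \leq h_\mu(T,\U) + \ep + O(\delta \log|\alpha|) + o(1),
\]
and letting $k \to \infty$, $\delta \to 0$, and $\ep \to 0$ completes the proof. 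The main obstacle I expect is precisely this entropy accounting: one has to control the contributions from points in $E$, from the boundary transitions where the forward orbit enters or exits the tower, and from the fact that consecutive atoms inside the tower are related but not literally equal as sets. These terms must be kept of order $O(k\delta\log|\alpha|) + o(k)$ so that they disappear in the limit; the non-ergodic case is handled by applying Rokhlin on the aperiodic part, which is a secondary but necessary technical point.
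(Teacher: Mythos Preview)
The paper does not supply its own proof of this lemma; it merely cites \cite{hmry,hyz,hy}. So there is nothing to compare against on the paper's side, and the question reduces to whether your sketch is a valid proof.

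Your easy direction $h_\mu(T,\U)\le h_\mu^+(T,\U)$ is fine. For the hard direction, your Rokhlin-tower partition $\alpha$ does refine $\U$ (this is exactly where invertibility enters, as you note), but the entropy estimate you claim does not follow from the counting you describe. You are right that the $\alpha_0^{k-1}$-name of $x$ is essentially determined by the $\beta$-atoms at the roughly $k\mu(F)$ visits of the forward orbit to the base $F$, together with lower-order data from $E$ and the level structure. However, each visit to $F$ contributes entropy according to the \emph{conditional} law of $\beta$ on $F$, not according to $\mu$. What the counting actually yields is
\[
h_\mu(T,\alpha)\ \le\ \mu(F)\,H_{\mu_F}(\beta)\ +\ (\text{small terms}),\qquad \mu_F:=\mu(\,\cdot\mid F),
\]
and not $\tfrac{1}{n}H_\mu(\beta)$. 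There is no a~priori relation between $H_{\mu_F}(\beta)$ and $H_\mu(\beta)$: for a base $F$ chosen by the ordinary Rokhlin lemma with no reference to $\beta$, one can have $H_{\mu_F}(\beta)$ as large as $\log|\beta|\le n\log|\U|$, so your bound collapses to $h_\mu(T,\alpha)\lesssim \log|\U|$, which says nothing. The obstacles you anticipate (contributions from $E$, boundary transitions, the level tag) are all genuinely of order $O(\delta\log|\alpha|)+o(1)$ and harmless; the real obstruction is this mismatch between $\mu_F$ and $\mu$ on $\beta$, which your sketch does not address.

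The gap can be closed, but it takes an extra idea: invoke the \emph{strong} Rokhlin lemma to choose the base $F$ independent of the partition $\beta$, so that $\mu_F(B)=\mu(B)$ for every $B\in\beta$ and hence $H_{\mu_F}(\beta)=H_\mu(\beta)$. With that in place your accounting goes through. Alternatively, the proofs in \cite{hmry,hy} avoid Rokhlin towers altogether, proceeding instead through a local Abramov-type identity for $h_\mu^+$ combined with the trivial bound $h_\mu^+(T^n,\U_0^{n-1})\le \inf_{\gamma\succeq\U_0^{n-1}}H_\mu(\gamma)$.
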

\begin{lem}\cite[Proposition 5]{hmry}\label{l5}
	Let $(X,T)$ be a TDS, $\mu\in M(X,T)$ and $\U$  be an open cover of $X$. Let $$\mu=\int_{M^e(X,T)}\theta dm(\theta)$$ be the ergodic decomposition of $\mu$. Then
	$$h_\mu(T,\U)=\int_{M^e(X,T)}h_\theta(T,\U)dm(\theta).$$
\end{lem}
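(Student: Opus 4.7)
My plan is to establish the lemma by proving the two inequalities separately, with the upper bound direction being the substantive one.

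First I would handle the easy inequality $h_\mu(T,\U) \geq \int h_\theta(T,\U) \, dm(\theta)$ via concavity. For any finite Borel partition $\alpha \succeq \U_0^{n-1}$, the Shannon entropy $\mu \mapsto H_\mu(\alpha)$ is concave, so $H_\mu(\alpha) \geq \int H_\theta(\alpha)\,dm(\theta) \geq \int \inf_{\beta \succeq \U_0^{n-1}} H_\theta(\beta)\,dm(\theta)$. Taking the infimum over $\alpha$ on the left yields $\inf_{\alpha \succeq \U_0^{n-1}} H_\mu(\alpha) \geq \int \inf_{\beta \succeq \U_0^{n-1}} H_\theta(\beta)\,dm(\theta)$. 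Dividing by $n$ and invoking bounded convergence (using the uniform bound $\frac{1}{n}\inf_{\beta \succeq \U_0^{n-1}} H_\theta(\beta) \leq \log|\U|$ coming from the partition that simply selects one element of $\U$ at each step) gives the desired inequality.

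For the reverse inequality $h_\mu(T,\U) \leq \int h_\theta(T,\U) \, dm(\theta)$, my strategy is a gluing construction. Fix $\epsilon > 0$ and a large $n$. Choose a finite measurable partition $\{E_1,\ldots,E_r\}$ of $M^e(X,T)$ fine enough that each $\theta \in E_j$ can be approximated by a representative $\bar\theta_j \in E_j$ on relevant test functions; let $Y_j = \{x \in X : \theta_x \in E_j\}$ be the corresponding $T$-invariant partition of $X$ (where $\theta_x$ is the ergodic component of $x$). For each $j$ pick a partition $\alpha_j \succeq \U_0^{n-1}$ with $H_{\bar\theta_j}(\alpha_j) \leq \inf_{\beta \succeq \U_0^{n-1}} H_{\bar\theta_j}(\beta) + \epsilon n$, and define the glued partition $\alpha = \{Y_j \cap B : 1 \leq j \leq r,\ B \in \alpha_j\}$, which still refines $\U_0^{n-1}$ since each $B \subseteq $ some element of $\U_0^{n-1}$. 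Using the decomposition $H_\mu(\alpha) = H_\mu(\{Y_j\}_j) + \sum_j \mu(Y_j) H_{\mu|_{Y_j}}(\alpha_j)$ together with $H_\mu(\{Y_j\}_j) \leq \log r$, one estimates $\frac{1}{n}\inf_{\beta \succeq \U_0^{n-1}} H_\mu(\beta) \leq \frac{1}{n}H_\mu(\alpha) \leq \frac{\log r}{n} + \int \frac{1}{n}\inf_{\beta \succeq \U_0^{n-1}} H_\theta(\beta)\,dm(\theta) + 2\epsilon$, provided the $E_j$'s are chosen so that $\mu|_{Y_j}$ approximates the average of $\theta$'s in $E_j$ well enough to transfer the entropy estimate.

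Letting $n \to \infty$ kills $\log r / n$, and again bounded convergence on the right-hand side yields $h_\mu(T,\U) \leq \int h_\theta(T,\U)\,dm(\theta) + 2\epsilon$; arbitrariness of $\epsilon$ concludes.

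\textbf{Main obstacle.} The substantive difficulty lies in the gluing step: one must choose the partition $\{E_j\}$ of $M^e(X,T)$ carefully enough that (i) the conditional measures $\mu|_{Y_j}$ on the atoms of the $\sigma$-algebra $\{Y_j\}$ really do approximate single ergodic components well in the sense that near-optimal partitions for $\bar\theta_j$ remain near-optimal for $\mu|_{Y_j}$, yet (ii) the number $r$ of atoms stays independent of $n$ so that the cost $\log r$ becomes negligible after dividing by $n$. This amounts, in effect, to a measurable-selection argument showing that $\mu \mapsto \inf_{\beta \succeq \U_0^{n-1}} H_\mu(\beta)$ is (approximately) affine in $\mu$ -- a property which, once established, immediately upgrades the concavity inequality of the easy direction into the desired equality in the limit.
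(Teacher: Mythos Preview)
The paper does not prove this lemma; it merely cites \cite[Proposition~5]{hmry}. So there is nothing in the paper to compare against, and I will comment on your proposal directly.

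Your easy direction via concavity is correct.

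For the hard direction, the gluing strategy is natural, but the obstacle you flag is more serious than you suggest and cannot be resolved along the lines you indicate. You need $H_{\mu|_{Y_j}}(\alpha_j)$ to be close to $H_{\bar\theta_j}(\alpha_j)$. Since $\alpha_j$ has up to $|\U|^n$ atoms, this requires $\mu|_{Y_j}$ and $\bar\theta_j$ to be close in total variation (weak-$*$ closeness is useless here because $\theta\mapsto H_\theta(\alpha_j)$ is not weak-$*$ continuous for general Borel $\alpha_j$). But distinct ergodic measures are mutually singular, so $\mu|_{Y_j}$, being a genuine mixture over $E_j$, sits at total-variation distance essentially $2$ from any single $\bar\theta_j\in E_j$, no matter how finely you partition $M^e(X,T)$. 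Thus the ``transfer of the entropy estimate'' you need simply fails, and refining $\{E_j\}$ does not help.

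The clean fix is to work with the entropy \emph{rate} $h_\mu(T,\alpha)$, which is genuinely affine under ergodic decomposition, rather than with the static entropy $H_\mu(\alpha)$, which is only concave. In the invertible case one has $h_\mu(T,\U)=h_\mu^+(T,\U)$ (Lemma~\ref{l2}); for each ergodic $\theta$ pick, via Lemma~\ref{l9} and a measurable selection, a partition $\alpha_\theta=\{A_1^\theta,\dots,A_d^\theta\}$ with $A_i^\theta\subset U_i$ and $h_\theta(T,\alpha_\theta)<h_\theta^+(T,\U)+\epsilon$; glue into a single $\beta=\{B_i\}_{i=1}^d$ with $B_i=\{x:x\in A_i^{\theta_x}\}\subset U_i$. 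Since $\beta$ coincides with $\alpha_\theta$ for $\theta$-a.e.\ $x$, affineness gives
\[
h_\mu^+(T,\U)\le h_\mu(T,\beta)=\int h_\theta(T,\alpha_\theta)\,dm(\theta)<\int h_\theta^+(T,\U)\,dm(\theta)+\epsilon.
\]
The non-invertible case then follows via the natural extension and Lemma~\ref{l1}. This is essentially the route taken in \cite{hmry}.
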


\begin{lem}\cite[Lemma 2.4]{cfh}\label{5.3}
Let $(X,T)$ be a TDS, $\nu\in M(X)$ and $M\in\N.$ Suppose $\a\in \mathcal{P}_X$ and $|\a|\leq M.$ Then for any $n,l\in\N$ satisfying $n\geq 2l,$ we have
	$$\frac{1}{n}H_\nu(\bigvee_{i=0}^{n-1}T^{-i}\a)\leq \frac{1}{l}H_{\nu_n}(\bigvee_{i=0}^{l-1}T^{-i}\a)+\frac{2l}{n}\log M,$$ 
	where $\nu_n=\frac{1}{n}\sum_{i=0}^{n-1}T^i\nu.$
\end{lem}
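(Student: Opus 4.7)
The plan is to apply Misiurewicz's sliding-block trick. For each shift $j\in\{0,1,\ldots,l-1\}$, I decompose the interval of iterates $\{0,1,\ldots,n-1\}$ into a head of length $j$, then $q:=\lfloor(n-j)/l\rfloor$ disjoint blocks of length $l$ starting at $j,j+l,\ldots,j+(q-1)l$, and a tail of length strictly less than $l$. The hypothesis $n\geq 2l$ is exactly what guarantees $q\geq 1$ for every $j$. Using subadditivity of $H_\nu$ on joins together with the identity $H_\nu(T^{-m}\beta)=H_{T^m\nu}(\beta)$, and absorbing the head and tail into a single additive error of at most $2l\log M$ (via $|\a|\leq M$), I expect to obtain
\[
H_\nu\Big(\bigvee_{i=0}^{n-1}T^{-i}\a\Big)\leq \sum_{k=0}^{q-1}H_{T^{j+kl}\nu}\Big(\bigvee_{i=0}^{l-1}T^{-i}\a\Big)+2l\log M.
\]

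Next I sum this inequality over $j=0,1,\ldots,l-1$. The indices $\{j+kl:0\leq j<l,\ 0\leq k<q\}$ together exhaust $\{0,1,\ldots,ql-1\}\subseteq\{0,\ldots,n-1\}$, so extending the aggregate index set to all of $\{0,\ldots,n-1\}$ only enlarges the right-hand side. This yields
\[
l\cdot H_\nu\Big(\bigvee_{i=0}^{n-1}T^{-i}\a\Big)\leq \sum_{i=0}^{n-1}H_{T^i\nu}\Big(\bigvee_{j=0}^{l-1}T^{-j}\a\Big)+2l^2\log M.
\]

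The concluding step invokes concavity of the functional $\mu\mapsto H_\mu(\beta)$ in the measure (a consequence of the concavity of $-x\log x$), applied to the convex combination $\nu_n=\tfrac{1}{n}\sum_{i=0}^{n-1}T^i\nu$ with $\beta=\bigvee_{j=0}^{l-1}T^{-j}\a$; this gives $\sum_{i=0}^{n-1}H_{T^i\nu}(\beta)\leq n\,H_{\nu_n}(\beta)$. Dividing both sides of the previous display by $nl$ then produces the claimed inequality. No serious obstacle arises; the proof is entirely classical. The only point requiring care is the combinatorial bookkeeping for the head/tail contributions, and the observation that the condition $n\geq 2l$ is used precisely to ensure that every sliding decomposition contains at least one full block of length $l$.
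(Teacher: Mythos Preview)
Your argument is correct; it is the classical Misiurewicz sliding-block technique. The paper does not supply its own proof of this lemma but simply cites it from \cite{cfh}, where the argument follows exactly the route you describe.
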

\begin{lem}\cite[Lemma 5.1]{fh}\label{2}
Let $(X,T)$ be a TDS and $\mu\in M(X,T)$. Suppose $\a\in \mathcal{P}_X$ and  $|\a|\leq M$. Write $$h(n)=H_{\frac{1}{n}\sum_{i=0}^{n-1}T^i\mu}(\a),$$ $$h(n,m)=H_{\frac{1}{m}\sum_{i=n}^{m+n-1}T^i\mu}(\a).$$
	Then
	$$|h(n+1)-h(n)|\leq\frac{1}{n+1}\log(3M^2(n+1)).$$
	$$|h(n+m)-\frac{n}{n+m}h(n)-\frac{m}{n+m}h(n,m)|\leq \log 2.$$
\end{lem}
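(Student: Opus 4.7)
The plan is to reduce both bounds to a single almost-affinity estimate for partition entropy under convex combinations of measures. Namely, if $\nu = \lambda\nu_1 + (1-\lambda)\nu_2$ with $\lambda\in[0,1]$, then for any finite partition $\alpha$ one has
$$\lambda H_{\nu_1}(\alpha) + (1-\lambda)H_{\nu_2}(\alpha) \leq H_{\nu}(\alpha) \leq \lambda H_{\nu_1}(\alpha) + (1-\lambda)H_{\nu_2}(\alpha) + \eta(\lambda),$$
where $\eta(\lambda) = -\lambda\log\lambda - (1-\lambda)\log(1-\lambda)$ denotes the binary entropy. The left inequality is concavity of $-t\log t$; the right follows by applying the grouping identity to the joint distribution on $\alpha\times\{1,2\}$ and using $H(\text{labels})\leq \eta(\lambda)$.

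For the second inequality, set $\mu_n := \tfrac{1}{n}\sum_{i=0}^{n-1}T^i\mu$ and $\mu_{n,m} := \tfrac{1}{m}\sum_{i=n}^{n+m-1}T^i\mu$, and apply the estimate to the convex decomposition $\mu_{n+m} = \tfrac{n}{n+m}\mu_n + \tfrac{m}{n+m}\mu_{n,m}$ with $\lambda = n/(n+m)$. Since $\eta(\lambda)\leq \log 2$, this yields
$$\Bigl|h(n+m) - \tfrac{n}{n+m}h(n) - \tfrac{m}{n+m}h(n,m)\Bigr| \leq \log 2$$
at once.

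For the first inequality, write $\mu_{n+1} = \tfrac{n}{n+1}\mu_n + \tfrac{1}{n+1}T^n\mu$ and apply the same estimate with $\lambda = n/(n+1)$ to obtain
$$\Bigl| h(n+1) - \tfrac{n}{n+1}h(n) - \tfrac{1}{n+1}H_{T^n\mu}(\alpha)\Bigr| \leq \eta\bigl(\tfrac{n}{n+1}\bigr).$$
Rearranging this as $h(n+1) - h(n) = \tfrac{1}{n+1}(H_{T^n\mu}(\alpha) - h(n)) + \theta\,\eta(\tfrac{n}{n+1})$ with $|\theta|\leq 1$, and using $\log(1+1/n)\leq 1/n$, one sharpens the binary-entropy bound to
$$\eta\bigl(\tfrac{n}{n+1}\bigr) = \tfrac{n}{n+1}\log\bigl(1+\tfrac{1}{n}\bigr) + \tfrac{1}{n+1}\log(n+1) \leq \frac{1 + \log(n+1)}{n+1}.$$
Because $0\leq h(n), H_{T^n\mu}(\alpha)\leq \log M$, combining gives
$$|h(n+1)-h(n)|\leq \frac{\log M + 1 + \log(n+1)}{n+1} \leq \frac{\log\bigl(3M^2(n+1)\bigr)}{n+1},$$
where the last step is valid for $M\geq 1$ since $eM(n+1)\leq 3M^2(n+1)$ (as $e<3$).

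The only delicate point is part (a): the naive bound $\eta(\lambda)\leq \log 2$ is too coarse to recover the desired $(\log n)/n$-decay, so one must substitute the explicit formula for $\eta(n/(n+1))$ and exploit $\log(1+1/n)\leq 1/n$. Part (b) and the underlying affinity estimate are entirely routine.
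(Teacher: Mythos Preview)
The paper does not supply its own proof of this lemma; it merely cites \cite[Lemma 5.1]{fh}. Your argument is correct and is essentially the standard one: both estimates reduce to the almost-affinity inequality
\[
0\le H_{\lambda\nu_1+(1-\lambda)\nu_2}(\alpha)-\lambda H_{\nu_1}(\alpha)-(1-\lambda)H_{\nu_2}(\alpha)\le \eta(\lambda),
\]
with part (b) following immediately from $\eta\le\log 2$, and part (a) requiring the explicit bound $\eta(n/(n+1))\le (1+\log(n+1))/(n+1)$ together with $|H_{T^n\mu}(\alpha)-h(n)|\le\log M$. The final comparison $eM(n+1)\le 3M^2(n+1)$ is fine since $e<3\le 3M$.

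One remark worth recording: the hypothesis $\mu\in M(X,T)$ as printed in the paper would render the lemma trivial (then $T^i\mu=\mu$ and $h(n)$ is constant). In the application within the proof of Theorem~\ref{thm4.9} the measure $\nu$ produced by Lemma~\ref{m} is \emph{not} $T_1$-invariant, so what is really needed is the statement for arbitrary $\mu\in M(X)$. Your proof nowhere uses invariance and therefore establishes exactly this.
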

\begin{lem}\cite[Lemma 5.4]{fh}\label{l8}
	Let $p\in\N$ and $u_j:\N\to\R,j=1,\ldots,p$ be bounded functions with $$\lim_{n\to\infty}|u_j(n+1)-u_j(n)|=0.$$
	Then for any positive numbers $c_1,\ldots,c_p$ and $r_1,\ldots,r_p$, we have
	$$\varlimsup_{n\to\infty}\sum_{j=1}^p\big(u_j(\lceil c_jn\rceil)-u_j(\lceil r_jn\rceil)\big)\geq 0.$$
\end{lem}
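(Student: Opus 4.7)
The approach is a contradiction argument via a \emph{logarithmic averaging} estimate. Assume for contradiction that $\limsup_n F(n)<0$, where $F(n):=\sum_{j=1}^p[u_j(\lceil c_jn\rceil)-u_j(\lceil r_jn\rceil)]$. Then there exist $\delta>0$ and $N_0\in\N$ with $F(n)\leq-\delta$ for all $n\geq N_0$, which gives the immediate upper bound
\[
\sum_{n=N_0}^N F(n)/n\leq-\delta\log(N/N_0)+O(1)\to-\infty.
\]
The plan is to establish the opposing lower bound $\sum_{n=1}^N F(n)/n=o(\log N)$; combined with the above this forces $-\delta\log N+O(1)\geq -o(\log N)$, hence $\delta\leq 0$, a contradiction.

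To prove the averaging bound I would treat each $j$ separately and compare the discrete sum $\sum_{n=1}^N[u_j(\lceil c_jn\rceil)-u_j(\lceil r_jn\rceil)]/n$ with the continuous integral
\[
I_j(N):=\int_1^N\frac{u_j(\lceil c_jt\rceil)-u_j(\lceil r_jt\rceil)}{t}\,dt.
\]
The substitution $s=\log t$ turns $I_j(N)$ into $\int_0^{\log N}[\phi_j(s+\log c_j)-\phi_j(s+\log r_j)]\,ds$, where $\phi_j(s):=u_j(\lceil e^s\rceil)$ is bounded by $\|u_j\|_\infty$. A shift-and-cancel computation,
\[
\int_0^L[\phi_j(s+\log c_j)-\phi_j(s+\log r_j)]\,ds=\int_{\log c_j}^{\log r_j}\phi_j-\int_{L+\log c_j}^{L+\log r_j}\phi_j,
\]
shows $|I_j(N)|\leq 2|\log(r_j/c_j)|\,\|u_j\|_\infty=O(1)$ uniformly in $N$, since both boundary integrals are over intervals of the same fixed length $|\log(r_j/c_j)|$.

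The main obstacle is controlling the discretization error $\sum_{n=1}^Ng_j(n)-\int_1^Ng_j(t)\,dt$, where $g_j(t):=[u_j(\lceil c_jt\rceil)-u_j(\lceil r_jt\rceil)]/t$. On each interval $[n,n+1]$ the step function $t\mapsto u_j(\lceil c_jt\rceil)-u_j(\lceil r_jt\rceil)$ differs from its value at $t=n$ by an amount $\eta_n$ bounded by the oscillation of $u_j$ over at most $\lceil c_j\rceil+\lceil r_j\rceil+2$ consecutive integers near $\lceil c_jn\rceil$ and $\lceil r_jn\rceil$; the hypothesis $|u_j(m+1)-u_j(m)|\to 0$ then gives $\eta_n\to 0$. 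Hence the per-interval discrepancy is $O(\eta_n/n)+O(1/n^2)$, and a Kronecker-type estimate yields $\sum_{n=1}^N\eta_n/n=o(\log N)$. Combining this with $I_j(N)=O(1)$ gives $\sum_{n=1}^Ng_j(n)=o(\log N)$, and summing over $j$ completes the plan. The delicate point throughout is that the averaging must be logarithmic (weight $1/n$) rather than the more obvious Cesàro $(1/N)$ weight, since the arithmetic mean $\frac{1}{N}\sum F(n)$ need not tend to zero even for well-behaved $u_j$ (e.g.\ $u_j(n)=\sin(\log n)$); the log-scale change of variable is what converts the problem into one where the telescoping cancellation is visible.
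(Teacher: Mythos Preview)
The paper does not supply its own proof of this lemma; it is quoted from Feng--Huang \cite[Lemma~5.4]{fh} and used as a black box in the proof of Theorem~\ref{thm4.9}. So there is no in-paper argument to compare your approach against.

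That said, your argument is correct, and the logarithmic-averaging idea is a clean way to handle the statement. The substitution $s=\log t$ converts $I_j(N)$ into a difference of two translates of the same bounded function $\phi_j$, which telescopes to an $O(1)$ quantity independent of $N$; the vanishing-increment hypothesis on $u_j$ is exactly what is needed to control the discretization error by $o(\log N)$, since on each unit interval $[n,n+1]$ the argument $\lceil c_j t\rceil$ ranges over at most $\lceil c_j\rceil+1$ consecutive integers near $c_j n\to\infty$. Two small points of presentation: (i) the phrase ``forces $-\delta\log N+O(1)\geq -o(\log N)$'' has the inequality written in a confusing direction---what you actually use is that $S_N:=\sum_{n=N_0}^N F(n)/n$ satisfies simultaneously $S_N\le -\delta\log N+O(1)$ and $S_N=o(\log N)$, so dividing by $\log N$ and letting $N\to\infty$ yields $0\le -\delta$; (ii) when some $c_j<1$ or $r_j<1$ the shifted integration limits $\log c_j,\log r_j$ are negative, so it is worth remarking that $\phi_j(s)=u_j(\lceil e^s\rceil)$ is defined and bounded for all real $s$ (indeed $\lceil e^s\rceil=1$ for $s\le 0$), so the boundary integrals in your shift-and-cancel identity make sense.
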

\begin{lem}\cite[Lemma 2]{hmry}\label{l9}
Let $(X,T)$ be a TDS, $\U=\{U_1,\ldots,U_m\}$ be an open cover of $X$ and $G:\mathcal{P}_X\to \R$ be monotone in the sense that $G(\a)\geq G(\beta)$ whenever $\a\succeq\beta.$ Then 
	$$\inf_{\a\in\mathcal{P}_X,\a\succeq \U}G(\a)=\inf_{\a:\a=\{A_1,\ldots,A_m\}, A_i\subset U_i, 1\leq i\leq m}G(\a).$$
\end{lem}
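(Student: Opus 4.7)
The plan is to prove the two inequalities separately, with only the $\leq$ direction requiring any real argument.

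For the direction $\inf_{\a\succeq \U}G(\a)\leq\inf_{\a=\{A_1,\ldots,A_m\},A_i\subset U_i}G(\a)$, I would simply observe that every partition $\a=\{A_1,\ldots,A_m\}$ with $A_i\subset U_i$ automatically satisfies $\a\succeq\U$ (each $A_i$ lies in the element $U_i$ of $\U$), so the infimum on the right is taken over a subfamily of the partitions appearing on the left.

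For the reverse direction, the idea is to coarsen an arbitrary refinement. Given $\a\in\mathcal{P}_X$ with $\a\succeq\U$, every atom $C\in\a$ is contained in at least one element of $\U$; for each such $C$ fix an index $i(C)\in\{1,\ldots,m\}$ with $C\subset U_{i(C)}$, and set
\[
A_i:=\bigcup_{C\in\a,\ i(C)=i}C,\qquad 1\leq i\leq m.
\]
Then the $A_i$ are pairwise disjoint Borel sets with $A_i\subset U_i$ and $\bigcup_i A_i=X$, so $\beta:=\{A_1,\ldots,A_m\}$ is a partition of the required form (allowing some $A_i$ to be empty, as the statement permits). By construction $\a\succeq\beta$, so monotonicity of $G$ gives $G(\a)\geq G(\beta)\geq \inf_{\a=\{A_1,\ldots,A_m\},A_i\subset U_i}G(\a)$. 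Taking the infimum over $\a\succeq\U$ yields the desired inequality.

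There is no genuine obstacle here; the only subtlety is purely bookkeeping, namely that the partitions $\{A_1,\ldots,A_m\}$ on the right-hand side should be understood as indexed families where empty atoms are allowed, so that the coarsening $\beta$ constructed above really belongs to the admissible class. Once this convention is fixed, combining the two inequalities gives the claimed equality.
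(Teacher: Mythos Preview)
Your argument is correct and is exactly the standard proof: one inclusion is trivial, and for the other you coarsen an arbitrary $\a\succeq\U$ by grouping its atoms according to a choice of containing $U_i$, obtaining a partition $\beta=\{A_1,\ldots,A_m\}$ with $A_i\subset U_i$ and $\a\succeq\beta$, so monotonicity gives $G(\a)\geq G(\beta)$. Note that the present paper does not supply its own proof of this lemma---it is simply quoted from \cite[Lemma~2]{hmry}---so there is nothing to compare against here; your proof is precisely the argument one finds in that reference.
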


\medskip

Now we pay  attention back  to weighted topological pressure. We apply the techniques
in geometric measure theory and get the following useful lemma, which can be seen as the local version of the dynamical Frostman lemma  \cite[Lemma 3.3]{fh}.
\begin{lem}\label{m}
	Let $s\geq 0, N\in\N$ and $\mathcal{U}_i$ be open covers of $X_i,i=1,\ldots,k$. Suppose that $$c=W^{{\bf a},s}_{N,\{\mathcal{U}_i\}_{i=1}^k}(X_1)>0,$$ then there is a Borel probability measure $\mu$ on $X_1$ such that for any $n\geq N$ and  $$A\in \bigvee_{i=1}^{k}(\tau_{i-1}^{-1}\mathcal{U}_i)_0^{\lceil(a_1+\ldots+a_i)n\rceil-1},$$we have $$\mu(A)\leq \frac{1}{c}e^{-ns+\frac{1}{a_1}\sup_{x\in A}S_{\lceil a_1n\rceil}f(x)}.$$
\end{lem}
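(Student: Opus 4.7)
The plan is to construct $\mu$ as a normalized positive linear functional on $C(X_1,\R)$, obtained by a Hahn--Banach extension argument modeled on the dynamical Frostman lemma of \cite{fh}. First I would define $M:C(X_1,[0,\infty))\to[0,\infty)$ by the same formula as $W^{{\bf a},s}_{N,\{\U_i\}_{i=1}^k}$ applied to nonnegative continuous functions. Since the atoms of $\bigvee_{i=1}^k(\tau_{i-1}^{-1}\U_i)_0^{\lceil(a_1+\cdots+a_i)N\rceil-1}$ form a finite open cover of $X_1$, one checks $M(g)\le c\|g\|_\infty$ for every such $g$. Combining approximate covers gives sub-additivity, rescaling the $c_j$ gives positive homogeneity, and monotonicity is immediate; the hypothesis yields $M(\chi_{X_1})=c$.

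Next extend $M$ to a sublinear functional on $C(X_1,\R)$ by
\[
\tilde M(g)=\inf\{M(h):h\in C(X_1,[0,\infty)),\ h\ge g\}.
\]
Then $\tilde M$ inherits sub-additivity and positive homogeneity from $M$, agrees with $M$ on $C(X_1,[0,\infty))$ (by monotonicity of $M$), and vanishes on functions $\le 0$ (take $h\equiv 0$). On the one-dimensional subspace $\R\cdot\chi_{X_1}$ set $\ell_0(\alpha\chi_{X_1})=\alpha c$; this is dominated by $\tilde M$ since $\tilde M(\alpha\chi_{X_1})=\alpha c$ for $\alpha\ge 0$ and $\tilde M(\alpha\chi_{X_1})=0\ge \alpha c$ for $\alpha\le 0$. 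Hahn--Banach produces a linear extension $\ell:C(X_1,\R)\to\R$ with $\ell\le\tilde M$ and $\ell(\chi_{X_1})=c$. For $g\ge 0$ we get $\ell(-g)\le\tilde M(-g)=0$, so $\ell$ is positive; Riesz representation yields a finite positive Borel measure $\mu'$ on $X_1$ with $\mu'(X_1)=c$, and I set $\mu=\mu'/c$.

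It remains to verify the pointwise upper bound. Fix $n\ge N$ and an atom $A$ of $\bigvee_{i=1}^k(\tau_{i-1}^{-1}\U_i)_0^{\lceil(a_1+\cdots+a_i)n\rceil-1}$; since all $\U_i$ are open and $\tau_{i-1},T_i$ are continuous, $A$ is a finite intersection of open sets, hence open. For any continuous $g$ with $0\le g\le\chi_A$, the single-term collection $\{(n,A,1)\}$ is admissible in the definition of $M(g)$, so $\ell(g)\le\tilde M(g)=M(g)\le e^{-ns+\frac{1}{a_1}\sup_{x\in A}S_{\lceil a_1 n\rceil}f(x)}$. Inner regularity of $\mu'$ on the open set $A$, realized via Urysohn approximations of compacta $K\subset A$, gives $\mu'(A)=\sup\{\int g\,d\mu':g\in C(X_1,[0,1]),\ g\le\chi_A\}$, so the same exponential bound holds for $\mu'(A)$, and dividing by $c$ yields the claim. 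The main technical step is setting up $\tilde M$ so that Hahn--Banach can be applied while preserving the normalization $\ell(\chi_{X_1})=c$; once this is in place, the observation that atoms of joins of pull-back open covers remain open reduces the rest to standard inner regularity.
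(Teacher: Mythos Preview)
Your proof is correct and follows essentially the same Hahn--Banach/Riesz strategy as the paper: the paper defines $p(g)=\frac{1}{c}W^{{\bf a},s}_{N,\{\U_i\}_{i=1}^k}(g)$ directly on all of $C(X_1)$ (which amounts to your $\tilde M$ up to normalization, since the covering constraint $\sum_j c_j\chi_{A_j}\ge g$ with $c_j>0$ reduces to $\sum_j c_j\chi_{A_j}\ge g^+$), extends $t\mapsto t$ from the constants via Hahn--Banach, and then invokes Riesz and ``standard discussion'' for the final bound. Your write-up is simply more explicit about the extension step and about why the atoms being open lets inner regularity finish the argument.
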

\begin{proof}
	Define a function $p$ on $C(X_1)$ by $$p(g)=\frac{1}{c}W^{{\bf a},s}_{N,\{\mathcal{U}_i\}_{i=1}^k}(g).$$
	It is easy to verify that
	\begin{enumerate}
		\item $p(g_1+g_2)\leq p(g_1)+p(g_2)$ for any $g_1,g_2\in C(X_1).$
		\item $p(tg)=tp(g)$ for any $t\geq 0, g\in C(X_1).$
		\item $p({\bf 1})=1, 0\leq p(g)\leq ||g||_\infty$ for any $g\in C(X_1),$ and $p(g)=0$ for $g\in C(X_1)$ with $g\leq 0.$
	\end{enumerate}
By the Hahn-Banach theorem, we can extend the linear functional $t\to tp({\bf 1}), t\in \R$, from the subspace of the constant functions to a linear functional $L:C(X_1)\to \R$ satisfying $L({\bf 1})=p({\bf 1})=1$ and $-p(-g)\leq L(g)\leq p(g)$ for any $g\in C(X_1).$
By the Riesz representation theorem we can  find a Borel probability measure $\mu$ on $X_1$ such that $L(g)=\int gdu$ for $g\in C(X_1).$ By standard discussion we can prove that
for any $n\geq N$ and any $A\in \bigvee_{i=1}^{k}(\tau_{i-1}^{-1}\mathcal{U}_i)_0^{\lceil(a_1+\ldots+a_i)n\rceil-1},$ we have $$\mu(A)\leq \frac{1}{c}e^{-ns+\frac{1}{a_1}\sup_{x\in A}S_{\lceil a_1n\rceil}f(x)}.$$
\end{proof}
For  open covers $\mathcal{U}_i$ of $X_i$ and $f\in C(X_1,\R)$, denote $$w_f(\bigvee_{i=1}^{k}\tau_{i-1}^{-1}\mathcal{U}_i)=\sup_{U\in\bigvee_{i=1}^{k}\tau_{i-1}^{-1}\mathcal{U}_i}\{|f(x)-f(y)|: x,y\in U\}.$$ The following theorem is the main result of this section:
\begin{thm}	\label{thm4.9}
	Let	$\mathcal{U}_i$ be open covers of $X_i,i=1,\ldots,k$ and $f\in C(X_1,\R)$.
	Then there exists an ergodic measure $\mu\in M(X_1,T_1)$ such that  $$ \sum_{i=1}^ka_ih_\mu(T_1,\bigvee_{j=i}^{k}\tau_{j-1}^{-1}\mathcal{U}_j)\geq P_{W}^{{\bf a}}(T_1,\{\mathcal{U}_i\}_{i=1}^k,f)-3w_f(\bigvee_{i=1}^{k}\tau_{i-1}^{-1}\mathcal{U}_i)-\int fd\mu.$$
\end{thm}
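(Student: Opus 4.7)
The plan is to adapt the upper-bound argument of \cite[Section~5]{fh} to the local setting, combining the dynamical Frostman Lemma \ref{m} with the quantitative entropy tools Lemmas \ref{5.3}, \ref{2}, \ref{l8} and ergodic decomposition Lemma \ref{l5}.

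First, fix $s<P_W^{\mathbf a}(T_1,\{\U_i\}_{i=1}^k,f)$. Lemma \ref{m} produces $N\in\N$, $c>0$ and a Borel probability measure $\nu$ on $X_1$ satisfying
\[
\nu(A)\le\tfrac{1}{c}e^{-ns+\tfrac{1}{a_1}\sup_{x\in A}S_{\lceil a_1n\rceil}f(x)}
\]
for every $n\ge N$ and every atom $A$ of $\bigvee_{i=1}^k(\tau_{i-1}^{-1}\U_i)_0^{\lceil(a_1+\cdots+a_i)n\rceil-1}$. Choose partitions $\alpha_i\in\mathcal{P}_{X_i}$ whose atoms lie inside elements of $\U_i$, set $\xi_i=\bigvee_{j=i}^k\tau_{j-1}^{-1}\alpha_j$, $\beta_n=\bigvee_{i=1}^k(\tau_{i-1}^{-1}\alpha_i)_0^{\lceil(a_1+\cdots+a_i)n\rceil-1}$, $\V=\bigvee_i\tau_{i-1}^{-1}\U_i$, and $w_f=w_f(\V)$. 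The crucial oscillation observation is that for each atom $A$ of the cover join and each $0\le j\le\lceil a_1n\rceil-1$, $T_1^jA$ lies inside a single element of $\V$, so $\operatorname{osc}_A S_{\lceil a_1n\rceil}f\le\lceil a_1n\rceil w_f$. Plugging the Frostman bound into $-\nu(B)\log\nu(B)$ and summing over the atoms $B\subset A$ of $\beta_n$ yields
\[
H_\nu(\beta_n)\ge ns-\tfrac{1}{a_1}\!\int\! S_{\lceil a_1n\rceil}f\,d\nu-\tfrac{\lceil a_1n\rceil}{a_1}w_f+\log c.
\]

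To transfer to an invariant measure, use the decomposition
\[
\beta_n=\bigvee_{i=1}^k\bigvee_{l=m_{i-1}(n)}^{m_i(n)-1}T_1^{-l}\xi_i,\quad m_i(n)=\lceil(a_1+\cdots+a_i)n\rceil,\;m_0(n)=0.
\]
Subadditivity together with $H_\nu(T_1^{-m}\gamma)=H_{T_1^m\nu}(\gamma)$ gives $H_\nu(\beta_n)\le\sum_i H_{T_1^{m_{i-1}(n)}\nu}((\xi_i)_0^{m_i(n)-m_{i-1}(n)-1})$. Apply Lemma \ref{5.3} to each summand at an auxiliary scale $l$, introducing the Birkhoff-average measures $\rho_i^{(n)}=\frac{1}{m_i(n)-m_{i-1}(n)}\sum_{j=m_{i-1}(n)}^{m_i(n)-1}T_1^j\nu$; pass to a common subsequence so that each $\rho_i^{(n)}$ converges weak-$*$ to a $T_1$-invariant $\rho_i$, divide by $n$, use $\tfrac{m_i(n)-m_{i-1}(n)}{n}\to a_i$ together with Lemma \ref{l8} for the ceiling functions and Lemma \ref{2} for the residual $H(n+1)-H(n)$ fluctuations, and finally let $l\to\infty$ to get
\[
\sum_{i=1}^k a_i h_{\rho_i}(T_1,\xi_i)\ge s-\int f\,d\rho_1-w_f.
\]

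The most delicate point is to replace the possibly distinct $\rho_1,\ldots,\rho_k$ by a single ergodic $\mu$ at the cost of two additional $w_f$ terms. Following \cite{fh}, one takes $\mu$ to be a suitable invariant measure built from the $\rho_i$'s (for instance the weighted convex combination $\sum_i\lambda_i\rho_i$ with $\lambda_i=a_i/\sum_j a_j$), exploits affinity of $\mu\mapsto h_\mu(T_1,\xi_i)$ via Lemma \ref{l5}, and uses a pigeonhole argument on the ergodic decomposition to extract an ergodic $\theta$ satisfying $\sum_i a_i h_\theta(T_1,\xi_i)+\int f\,d\theta\ge s-3w_f$; the extra $2w_f$ absorbs both the discrepancy between $\int f\,d\rho_1$ and $\int f\,d\theta$ and the entropy spread $h_{\rho_j}(T_1,\xi_i)-h_{\rho_i}(T_1,\xi_i)$. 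Lemma \ref{l9} then lets us take the infimum over partitions $\alpha_i$ subordinate to $\U_i$, replacing $h_\theta(T_1,\xi_i)$ by $h_\theta(T_1,\bigvee_{j\ge i}\tau_{j-1}^{-1}\U_j)$, and letting $s\nearrow P_W^{\mathbf a}$ finishes the proof. The main obstacle is precisely this synthesis: the weighted time-scales naturally produce $k$ distinct Birkhoff-average limits, and extracting a single ergodic measure whose weighted entropy sum and integral of $f$ jointly satisfy the bound---with only the stated $3w_f$ slack---is the heart of the argument.
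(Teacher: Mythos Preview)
Your proposal has a genuine gap at the ``synthesis'' step, and it is precisely the point you flag as the heart of the argument.

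You arrive at $k$ distinct invariant limit measures $\rho_1,\ldots,\rho_k$ with
\[
\sum_{i=1}^k a_i h_{\rho_i}(T_1,\xi_i)\ge s-\int f\,d\rho_1-w_f,
\]
and then claim that a convex combination $\mu=\sum_i\lambda_i\rho_i$ followed by ergodic decomposition yields an ergodic $\theta$ with the desired bound, the extra $2w_f$ ``absorbing the entropy spread $h_{\rho_j}(T_1,\xi_i)-h_{\rho_i}(T_1,\xi_i)$''. This is not correct: the quantity $w_f$ controls only the oscillation of the potential $f$ across elements of $\V$; it says nothing whatsoever about differences of \emph{entropies} of distinct invariant measures with respect to the partitions $\xi_i$. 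There is no mechanism to bound $|h_{\rho_j}(T_1,\xi_i)-h_{\rho_i}(T_1,\xi_i)|$ by any multiple of $w_f$, so the passage from $k$ measures to one collapses.

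The paper avoids this problem altogether. Rather than taking $k$ separate limits, it rewrites the block contributions $(t_i(n)-t_{i-1}(n))H_{\omega_{i,n}}(\cdot)$ in terms of the \emph{single} sequence $\nu_{t_1(n)}$, introducing the correction term
\[
\gamma_n=\sum_{i=2}^k t_i(n)\big(H_{\nu_{t_i(n)}}(i)-H_{\nu_{t_1(n)}}(i)\big)-\sum_{i=2}^k t_{i-1}(n)\big(H_{\nu_{t_{i-1}(n)}}(i)-H_{\nu_{t_1(n)}}(i)\big),
\]
and then uses Lemma~\ref{l2} together with Lemma~\ref{l8} to show $\varlimsup_n(-\gamma_n/n)\ge 0$. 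This forces all the weighted entropy to be seen by the one limit $\mu=\lim_j\nu_{t_1(n_j)}$, so no synthesis of several measures is ever needed. The $2w_f$ loss arises already at the Frostman step; the third $w_f$ comes not from any measure-merging but from the reduction to the invertible zero-dimensional case via the natural extension.

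Two further issues: (i) you never reduce to the invertible zero-dimensional setting, yet you need clopen partitions for the weak-$*$ limit to preserve $H_\mu(\cdot)$ and you need invertibility for Lemma~\ref{l2} to identify $h_\mu^+$ with $h_\mu$; (ii) your ergodic $\theta$ is extracted \emph{after} fixing the partitions $\alpha_i$, so you cannot subsequently take the infimum over $\alpha_i\preceq\U_i$ via Lemma~\ref{l9}. The paper handles (ii) by building the ``for all $\beta_i$'' quantifier into the closed sets $M(l)$ and intersecting over $l$.
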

\begin{proof}
case 1: $(X_1,T_1)$ is invertible and zero-dimensional. We will prove there exists an ergodic measure $\mu\in M(X_1,T_1)$ such that 
$$\sum_{i=1}^ka_ih_\mu(T_1,\bigvee_{j=i}^{k}\tau_{j-1}^{-1}\mathcal{U}_j)\geq P_{W}^{{\bf a}}(T_1,\{\mathcal{U}_i\}_{i=1}^k,f)-2w_f(\bigvee_{i=1}^{k}\tau_{i-1}^{-1}\mathcal{U}_i)-\int fd\mu.$$
Write $$ t_0(n)=0,\  t_i(n)=\lceil(a_1+\ldots+a_i)n\rceil.$$
For an open cover $\U=\{U_1,\ldots,U_d\}$ of $X_1$, define $$\U^*=\{\a\in\mathcal{P}_{X_1}:\a=\{A_1,\ldots,A_d\}, A_m\subset U_m, A_m\text{\ are clopen \ sets,\ }m=1,\ldots,d\}.$$
Denote $h=P_{W}^{{\bf a}}(\{\mathcal{U}_i\}_{i=1}^k).$ Without loss of generality, we assume $h>0.$ Let  $$M=\max_{1\leq i\leq k} |\bigvee_{j=i}^{k}\tau_{j-1}^{-1}\mathcal{U}_j|.$$
{\bf Claim:} For every $\l\in\N,$ the set \begin{equation*}
	\begin{split}
		&M(l)=\{\mu\in M(X_1,T_1):	\forall\  \beta_i\in(\bigvee_{j=i}^{k}\tau_{j-1}^{-1}\mathcal{U}_j)^*,i=1,\ldots,k,
		\\&\sum_{i=1}^ka_iH_\mu(\bigvee_{h=0}^{l-1}T^{-h}\beta_i)\geq l\big(h-2w_f(\bigvee_{i=1}^{k}\tau_{i-1}^{-1}\mathcal{U}_i)-\int fd\mu\big)-1-k\sum_{i=1}^ka_i\log 2	
	\}
	\end{split}
\end{equation*} is not empty.

proof of claim: Fix $l\in\N$. Choose some $s\geq 0$ such that $h-\frac{1}{l}\leq s<h.$ By definition there exists $N\in\N$ such that $W^{{\bf a},s}_{N,\{\mathcal{U}_i\}_{i=1}^k}(X_1)\geq 1.$ By Lemma \ref{m}, there is a Borel probability measure $\nu$ on $X_1$ such that for any $n\geq N$ and  $A\in \bigvee\limits_{i=1}^{k}(\tau_{i-1}^{-1}\mathcal{U}_i)_0^{\lceil(a_1+\ldots+a_i)n\rceil-1},$we have $$\nu(A)\leq e^{-ns+\frac{1}{a_1}\sup_{x\in A}S_{\lceil a_1n\rceil}f(x)}.$$	Define $\nu_m=\frac{1}{m}\sum_{j=0}^{m-1}T^j\nu$ and $$  \omega_{i,n}=\frac{\sum_{j=t_{i-1}(n)}^{t_i(n)-1}T^j\nu}{t_i(n)-t_{i-1}(n)}$$ for $1\leq i\leq k$ with $ t_i(n)>t_{i-1}(n).$   
For a Borel probability measure $\theta$ on $X_1$ and $1\leq i\leq k$, define $$H_\theta(i)=\inf_{\beta\in(\bigvee_{j=i}^{k}\tau_{j-1}^{-1}\mathcal{U}_j)^*}H_{\theta}(\bigvee\limits_{j=0}^{l-1}T^{-j}\beta) .$$
For $$n> \max\{N,\{\frac{2l+1}{a_i}: 1\leq i\leq k, a_i\neq 0\} \}$$
and  $$\beta_i\in(\bigvee_{j=i}^{k}\tau_{j-1}^{-1}\mathcal{U}_j)^*,i=1,\ldots,k,$$ we have
\begin{equation*}
	\begin{split}
		\bigvee_{\substack{i:1\leq i\leq k,\\t_i(n)>t_{i-1}(n)}} (\beta_i)_{t_{i-1}(n)}^{t_i(n)-1}&\succeq \bigvee_{\substack{i:1\leq i\leq k,\\t_i(n)>t_{i-1}(n)}}(\bigvee_{j=i}^{k}\tau_{j-1}^{-1}\mathcal{U}_j)_{t_{i-1}(n)}^{t_i(n)-1}\\&=\bigvee_{i=1}^{k}(\tau_{i-1}^{-1}\mathcal{U}_i)_0^{\lceil(a_1+\ldots+a_i)n\rceil-1}.
	\end{split}
\end{equation*}
Hence for $A\in \bigvee\limits_{\substack{i:1\leq i\leq k,\\t_i(n)>t_{i-1}(n)}} (\beta_i)_{t_{i-1}(n)}^{t_i(n)-1}$, 
we have $$\nu(A)\leq e^{-ns+\frac{\lceil a_1n\rceil}{a_1}w_f(\bigvee_{i=1}^{k}\tau_{i-1}^{-1}\mathcal{U}_i)+\frac{1}{a_1}\sup_{x\in A}S_{\lceil a_1n\rceil}f(x)}.$$ It follows that $$-\nu(A)\log \nu(A)\geq \big(ns-\frac{\lceil a_1n\rceil}{a_1}w_f(\bigvee_{i=1}^{k}\tau_{i-1}^{-1}\mathcal{U}_i)-\frac{1}{a_1}\sup_{x\in A}S_{\lceil a_1n\rceil}f(x)\big)\nu(A)$$ and
 $$H_\nu\Big(\bigvee_{\substack{i:1\leq i\leq k,\\
 		t_i(n)>t_{i-1}(n)}}\bigvee_{j=t_{i-1}(n)}^{t_i(n)-1}T^{-j}\beta_i\Big)\geq ns-\frac{2\lceil a_1n\rceil}{a_1}w_f(\bigvee_{i=1}^{k}\tau_{i-1}^{-1}\mathcal{U}_i)-\frac{1}{a_1}\int S_{\lceil a_1n\rceil}fdv.$$	
For $i$ with  $1\leq i\leq k,a_i>0$, since $n\geq\frac{2l+1}{a_i}$, we have $t_i(n)-t_{i-1}(n)\geq 2l$, hence	by Lemma \ref{5.3}, we have
$$\frac{t_i(n)-t_{i-1}(n)}{l}H_{\omega_{i,n}}(\bigvee_{j=0}^{l-1}T^{-j}\beta_i)\geq H_{\nu}\Big(\bigvee_{j=t_{i-1}(n)}^{t_i(n)-1}T^{-j}\beta_i\Big)-2l\log M.$$
It follows that
\begin{equation*}
	\begin{split}
	&\sum_{i=1}^k\frac{t_i(n)-t_{i-1}(n)}{l}H_{\omega_{i,n}}(\bigvee_{j=0}^{l-1}T^{-j}\beta_i)\\
	&\geq ns-\frac{2\lceil a_1n\rceil}{a_1}w_f(\bigvee_{i=1}^{k}\tau_{i-1}^{-1}\mathcal{U}_i)-\frac{\lceil a_1n\rceil}{a_1}\int fd\nu_{t_1(n)}-2kl\log M.	
	\end{split}
\end{equation*}
Since $$\nu_{t_i(n)}=\frac{t_{i-1}(n)}{t_i(n)}\nu_{t_{i-1}(n)}+\frac{t_i(n)-t_{i-1}(n)}{t_i(n)}\omega_{i,n},$$	by Lemma \ref{2}, we have \begin{equation*}
		\begin{split}
	t_i(n)H_{\nu_{t_i(n)}}(\bigvee_{j=0}^{l-1}T^{-j}\beta_i)-t_{i-1}(n)H_{\nu_{t_{i-1}(n)}}(\bigvee_{j=0}^{l-1}T^{-j}\beta_i)\\
	\geq\big(t_i(n)-t_{i-1}(n)\big)H_{\omega_{i,n}}\big(\bigvee_{j=0}^{l-1}T^{-j}\beta_i\big)-t_i(n)\log 2.	
	\end{split}
	\end{equation*}
Hence 
\begin{equation*}
	\begin{split}
	&\sum_{i=1}^k	\Big(t_i(n)H_{\nu_{t_i(n)}}(\bigvee_{j=0}^{l-1}T^{-j}\beta_i)-t_{i-1}(n)H_{\nu_{t_{i-1}(n)}}(\bigvee_{j=0}^{l-1}T^{-j}\beta_i)\Big)\\
	&	\geq nls-\frac{2l\lceil a_1n\rceil}{a_1}w_f(\bigvee_{i=1}^{k}\tau_{i-1}^{-1}\mathcal{U}_i)-\frac{\lceil a_1n\rceil l}{a_1}\int fd\nu_{t_1(n)}-2kl^2\log M-kt_k(n)\log 2.	
	\end{split}
\end{equation*}
Since $\beta_i$ is chosen arbitrary, 
we have 
\begin{equation*}
	\begin{split}
\Theta_n:	&=	\sum_{i=1}^k	\Big(t_i(n)H_{\nu_{t_i(n)}}(i)-t_{i-1}(n)H_{\nu_{t_{i-1}(n)}}(i)\Big)\\
	&	\geq nls-\frac{2l\lceil a_1n\rceil}{a_1}w_f(\bigvee_{i=1}^{k}\tau_{i-1}^{-1}\mathcal{U}_i)-\frac{\lceil a_1n\rceil l}{a_1}\int fd\nu_{t_1(n)}-2kl^2\log M-kt_k(n)\log 2.	
	\end{split}
\end{equation*}
  Define  $$\gamma_n=\sum_{i=2}^kt_i(n)\big(H_{\nu_{t_i(n)}}(i)-H_{\nu_{t_1(n)}}(i)\big)-\sum_{i=2}^kt_{i-1}(n)\big(H_{\nu_{t_{i-1}(n)}}(i)-H_{\nu_{t_1(n)}}(i)\big).$$
 We have 
 $$\Theta_n=\gamma_n+\sum_{i=1}^k\big(t_i(n)-t_{i-1}(n)\big)H_{\nu_{t_1(n)}}(i).$$
 Define \begin{equation*}
 	\begin{split}
 	w(n)=&\sum_{i=2}^k(a_1+\ldots+a_{i-1})\big(H_{\nu_{t_{i-1}(n)}}(i)-H_{\nu_{t_1(n)}}(i)\big)\\
 	&-\sum_{i=2}^k(a_1+\ldots+a_i)\big(H_{\nu_{t_{i}(n)}}(i)-H_{\nu_{t_1(n)}}(i)\big).	
 	\end{split}
 \end{equation*}
In Lemma \ref{l8},  we take $p=2k-2,$
\begin{equation*}
	\begin{split}
	&u_j(n)=(a_1+\ldots+a_j)H_{\nu_{n}}(j+1), c_j=a_1+\ldots+a_j,  1\leq j\leq k-1;\\
	&u_j(n)=-(a_1+\ldots+a_{j-k+2})H_{\nu_{n}}(j-k+2), c_j=a_1+\ldots+a_{j-k+2},  k\leq j\leq 2k-2,	
	\end{split}
\end{equation*}

and take $r_j=a_1, 1\leq j\leq 2k-2.$ Since for any $1\leq i\leq k,$ $$|\bigvee\limits_{j=0}^{l-1}T^{-j}\beta|\leq M^l, \forall\beta\in(\bigvee\limits_{j=i}^{k}\tau_{j-1}^{-1}\mathcal{U}_j)^*,$$
by Lemma \ref{2} we have
\begin{equation*}
	|H_{\nu_n}(i)-H_{\nu_{n+1}}(i)|\leq\frac{1}{n+1}\log(3M^{2l}(n+1)).	
\end{equation*} 
Hence for any $1\leq j\leq 2k-2,$ $$\lim_{n\to\infty}|u_j(n+1)-u_j(n)|=0.$$ By Lemma \ref{l8} we have $\varlimsup\limits_{n\to\infty}w(n)\geq 0$. It follows that $$\varlimsup_{n\to\infty}-\frac{\gamma_n}{n}=\varlimsup_{n\to\infty}w(n)\geq 0.$$
Hence $$\varlimsup_{n\to\infty}\Big(\sum_{i=1}^ka_iH_{\nu_{t_1(n)}}(i)+l\int fd\nu_{t_1(n)}\Big)\geq l\big(s-2w_f(\bigvee_{i=1}^{k}\tau_{i-1}^{-1}\mathcal{U}_i)\big)-k\sum_{i=1}^ka_i\log 2.$$
 We can take a subsequence $(n_j)$ such that $$\lim_{j\to\infty}\Big(\sum_{i=1}^ka_iH_{\nu_{t_1(n_j)}}(i)+l\int fd\nu_{t_1(n_j)}\Big)=\varlimsup_{n\to\infty}\Big(\sum_{i=1}^ka_iH_{\nu_{t_1(n)}}(i)+l\int fd\nu_{t_1(n)}\Big)$$
 and $\nu_{t_1(n_j)}$ converges. Assume $\nu_{t_1(n_j)}\to\mu$. It is easy to see that $\mu$ is $T_1$-invariant. For any $\beta_i\in(\bigvee_{j=i}^{k}\tau_{j-1}^{-1}\mathcal{U}_j)^*,i=1,\ldots,k,$
  since $\beta_i$ are clopen sets, we have
  \begin{equation*}
  	\begin{split}
  	\sum_{i=1}^ka_iH_\mu(\bigvee_{h=0}^{l-1}T^{-h}\beta_i)&\geq l\big(s-2w_f(\bigvee_{i=1}^{k}\tau_{i-1}^{-1}\mathcal{U}_i\big)-\int fd\mu)-k\sum_{i=1}^ka_i\log 2\\
  	&\geq l\big(h-2w_f(\bigvee_{i=1}^{k}\tau_{i-1}^{-1}\mathcal{U}_i\big)-\int fd\mu)-1-k\sum_{i=1}^ka_i\log 2.	
  	\end{split}
  \end{equation*} 
  Hence $\mu\in M(l).$ The claim is proved.
  
  \medskip 

 It is easy to check that $M(l)$ is closed and $M(l_1l_2)\subset M(l_1)$ for $l_1,l_2\in\N$. Hence $\bigcap\limits_{l\in\N}M(l)\neq\emptyset.$ Let $\mu\in \bigcap\limits_{l\in\N}M(l).$
  We have $$\sum_{i=1}^ka_ih_\mu(T_1,\beta_i)\geq h-2w_f(\bigvee_{i=1}^{k}\tau_{i-1}^{-1}\mathcal{U}_i)-\int fd\mu, \ \forall  \beta_i\in(\bigvee_{j=i}^{k}\tau_{j-1}^{-1}\mathcal{U}_j)^*,i=1,\ldots,k.$$
  Since $X$ is zero-dimensional, there exists a fundamental base of the topology made of clopen sets. 
  Hence  by Lemma \ref{l9}, we have $$\sum_{i=1}^ka_ih^+_\mu(T_1,\bigvee_{j=i}^{k}\tau_{j-1}^{-1}\mathcal{U}_j)\geq h-2w_f(\bigvee_{i=1}^{k}\tau_{i-1}^{-1}\mathcal{U}_i)-\int fd\mu.$$
  Since $(X_1,T_1)$ is invertible, by Lemma \ref{l2} we have $h_\mu(T_1,\mathcal{U})=h^+_\mu(T_1,\mathcal{U}),$ hence (we also can prove  directly using Proposition \ref{propp},  see \cite[Proposition 4.3]{hy} ) $$\sum_{i=1}^ka_ih_\mu(T_1,\bigvee_{j=i}^{k}\tau_{j-1}^{-1}\mathcal{U}_j)\geq h-2w_f(\bigvee_{i=1}^{k}\tau_{i-1}^{-1}\mathcal{U}_i)-\int fd\mu.$$
Let $$\mu=\int_{M^e(X_1,T_1)}\theta dm(\theta)$$ be the ergodic decomposition of $\mu$. By Lemma \ref{l5}, there exists $\theta\in M^e(X_1,T_1)$ such that 
$$\sum_{i=1}^ka_ih_\theta(T_1,\bigvee_{j=i}^{k}\tau_{j-1}^{-1}\mathcal{U}_j)\geq P_{W}^{{\bf a}}(\{\mathcal{U}_i\}_{i=1}^k)-2w_f(\bigvee_{i=1}^{k}\tau_{i-1}^{-1}\mathcal{U}_i)-\int fd\theta.$$

\medskip

case 2: general case

	Let $(\tilde{X}_1,\sigma_{T_1})$ be the natural extension of $(X_1,T_1)$. That is:  $$\tilde{X}_1=\{(x_1,x_2,\ldots)\in X_1^\N: T_1(x_{i+1})=x_i,i\in\N\},$$
	 $\sigma_{T_1}:\tilde{X}_1\to\tilde{X}_1$ is defined as
	$$\sigma_{T_1}(x_1,x_2,\ldots)=(T_1x_1,x_1,x_2,\ldots).$$
	 Let $\pi:(\tilde{X}_1,\sigma_{T_1})\to (X_1,T_1)$ be the factor map which project each element of $\tilde{X}_1$ onto is first component.	
	Consider the following system $$\tilde{X}_1\to X_2\to\cdots\to X_k.$$
	By definition we have
	$$w_f(\bigvee_{i=1}^{k}\tau_{i-1}^{-1}\mathcal{U}_i)=w_{f\circ\pi}(\bigvee_{i=1}^{k}\pi^{-1}\tau_{i-1}^{-1}\mathcal{U}_i).$$
	It is easy to check that
	 $$P_{W}^{{\bf a}}(T_1,\{\mathcal{U}_i\}_{i=1}^k,f)-w_f(\bigvee_{i=1}^{k}\tau_{i-1}^{-1}\mathcal{U}_i)\leq P_{W}^{{\bf a}}(\sigma_{T_1},\{\pi^{-1}\mathcal{U}_1,\mathcal{U}_2,\ldots,\mathcal{U}_k\},f\circ\pi).$$ By case 1, there exists $\mu\in M^e(\tilde{X}_1,\sigma_{T_1})$ such that \begin{equation*}
	 	\begin{split}
	 	\sum_{i=1}^ka_ih_\mu(\sigma_{T_1},\bigvee_{j=i}^{k}\pi^{-1}\tau_{j-1}^{-1}\mathcal{U}_j)\geq &P_{W}^{{\bf a}}(\sigma_{T_1},\{\pi^{-1}\mathcal{U}_1,\mathcal{U}_2,\ldots,\mathcal{U}_k\},f\circ\pi)\\&-2w_{f\circ\pi}(\bigvee_{i=1}^{k}\pi^{-1}\tau_{i-1}^{-1}\mathcal{U}_i)-\int f\circ\pi d\mu\\
	 \geq 	&P_{W}^{{\bf a}}(T_1,\{\mathcal{U}_i\}_{i=1}^k,f)-3w_f(\bigvee_{i=1}^{k}\tau_{i-1}^{-1}\mathcal{U}_i)-\int f\circ\pi d\mu.	
	 	\end{split}
	 \end{equation*}
	 By Lemma \ref{l1}, we have $$\sum_{i=1}^ka_ih_{\pi\mu}(T_1,\bigvee_{j=i}^{k}\tau_{j-1}^{-1}\mathcal{U}_j)\geq P_{W}^{{\bf a}}(T_1,\{\mathcal{U}_i\}_{i=1}^k,f)-3w_f(\bigvee_{i=1}^{k}\tau_{i-1}^{-1}\mathcal{U}_i)-\int fd\pi\mu.$$
\end{proof}

By Theorem \ref{2.4} and Theorem \ref{thm4.9}   we know
$$\sup_{\{\mathcal{U}_i\}_{i=1}^k}\sup_{\mu\in M(X_1,T_1)}\Big(\sum_{i=1}^ka_ih_\mu(T_1,\bigvee_{j=i}^{k}\tau_{j-1}^{-1}\mathcal{U}_j)+\int fd\mu\Big)\geq P^{{\bf a}}(T_1,f).$$
We will show 	$$\sup\limits_{\{\mathcal{U}_i\}_{i=1}^k}\sum_{i=1}^{k}a_ih_\mu(T_1,\bigvee_{j=i}^{k}\tau_{j-1}^{-1}\U_j)=\sum_{i=1}^{k}a_ih_{\tau_{i-1}\mu}(T_i).$$
Hence the left side of the inequality above is just $$\sup_{\mu\in M(X_1,T_1)}\big(\sum_{i=1}^{k}a_ih_{\tau_{i-1}\mu}(T_i)+\int fd\mu\big).$$
We will give a more general result. 
We need the following lemma:
\begin{lem}\label{1}
	Let $\pi: (X,T)\to(Y,S)$ be a factor map between TDSs.  Let $\mu\in M(X,T)$, $\a=\{A_1,A_2,\ldots,A_k\}\in \mathcal{P}_Y$ and $\ep>0$. Then 	there exists an open cover $\mathcal{U}$ of $Y$ with $k$ elements such that for any $j\geq 0$ and any $\beta\in \mathcal{P}_X$ satisfying $T^{-j}\pi^{-1}\mathcal{U}\preceq \beta$, we have $H_{\mu}(T^{-j}\pi^{-1}\a|\beta)<\ep.$
\end{lem}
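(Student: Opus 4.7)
The plan is to approximate the Borel partition $\alpha$ of $Y$ measure-theoretically by a suitable open cover, then extract a conditional entropy bound that is uniform in $j$. I would first choose a small parameter $\delta>0$ to be fixed at the end and apply Lemma~\ref{pp} with $(Y,S)$, $\alpha$ and $\pi\mu$ in place of $(X,T)$, $\alpha$ and $\mu$, obtaining open sets $U_0,U_1,\ldots,U_k$ with $\pi\mu(U_0)<\delta$, $\pi\mu(A_i\Delta U_i)<\delta$ for every $i$, and $U_1,\ldots,U_k$ pairwise disjoint. This produces $k+1$ sets, so the construction of $\mathcal{U}$ would end by absorbing $U_0$ into $U_1$: set $\tilde U_1:=U_0\cup U_1$ and $\mathcal{U}:=\{\tilde U_1,U_2,\ldots,U_k\}$, an open cover of $Y$ with exactly $k$ elements that still satisfies $\pi\mu(\tilde U_1\setminus A_1)<2\delta$ and $\pi\mu(U_i\setminus A_i)<\delta$ for $i\ge 2$.

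Next, given $\beta\in\mathcal{P}_X$ with $T^{-j}\pi^{-1}\mathcal{U}\preceq\beta$, I would collapse $\beta$ to a $k$-partition $\tau=\{W_1,\ldots,W_k\}$ coarser than $\beta$ by assigning each atom of $\beta$ to a single element of $T^{-j}\pi^{-1}\mathcal{U}$ containing it and grouping atoms according to this assignment. Since $\tau\preceq\beta$, monotonicity of conditional entropy reduces the problem to bounding $H_\mu(T^{-j}\pi^{-1}\alpha\mid\tau)$, where each cell $W_i$ sits inside the corresponding pullback $T^{-j}\pi^{-1}\tilde U_1$ or $T^{-j}\pi^{-1}U_i$.

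For the final estimate I would exploit the identity $\mu(T^{-j}\pi^{-1}E)=\pi\mu(E)$, valid for every Borel $E\subset Y$ by $T$-invariance of $\mu$ and the definition of pushforward. Writing $q_{i,l}:=\mu(W_i\cap T^{-j}\pi^{-1}A_l)$, the inclusion of $W_i$ gives $\sum_{l\ne i}q_{i,l}\le\pi\mu(U_i\setminus A_i)<\delta$ for $i\ge 2$, and the analogous bound with $2\delta$ for $i=1$; summing yields $\bar\eta:=\sum_i\sum_{l\ne i}q_{i,l}<(k+1)\delta$. Splitting each conditional distribution into the ``correct'' atom and the rest and invoking concavity of the binary entropy then gives
$$H_\mu(T^{-j}\pi^{-1}\alpha\mid\tau)\le H(\bar\eta,1-\bar\eta)+\bar\eta\log(k-1),$$
which tends to $0$ as $\delta\to 0$; choosing $\delta$ small enough makes the right-hand side smaller than $\epsilon$.

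The main obstacle is essentially bookkeeping: one has to track the $k$-element requirement (forcing the cosmetic merger $\tilde U_1=U_0\cup U_1$ with its attendant factor of $2$) and to justify the passage from $\beta$ to the coarser $k$-partition $\tau$ via an atom-by-atom choice. The conceptual point is the measure-preservation identity $\mu(T^{-j}\pi^{-1}E)=\pi\mu(E)$, which makes all error bounds automatically independent of $j$.
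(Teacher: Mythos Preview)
Your proof is correct and follows the same skeleton as the paper's: approximate $\alpha$ by an open cover of $Y$, collapse $\beta$ to a $k$-element partition subordinate to the pullback of that cover, bound the conditional entropy by the misplaced mass, and observe that $\mu(T^{-j}\pi^{-1}E)=\pi\mu(E)$ makes the estimate uniform in $j$. The implementations differ in two cosmetic places. First, rather than appealing to Lemma~\ref{pp} and then merging $U_0$ into $U_1$, the paper takes closed $B_i\subset A_i$ with $\pi\mu(A_i\setminus B_i)$ small and sets $U_i=B_0\cup B_i$ where $B_0=(\bigcup_i B_i)^c$; this directly gives $k$ open sets covering $Y$, so no merge is needed. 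Second, in place of your explicit Fano-type bound $H(\bar\eta,1-\bar\eta)+\bar\eta\log(k-1)$, the paper invokes \cite[Lemma 4.15]{wal} as a black box to convert $\sum_i\mu(C_i\Delta T^{-j}\pi^{-1}A_i)<\delta$ into $H_\mu(T^{-j}\pi^{-1}\alpha\mid\beta')<\epsilon$. Your direct estimate is more self-contained; the paper's construction of $\mathcal{U}$ is slightly cleaner and symmetric in the indices.
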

\begin{proof}
By \cite[Lemma 4.15]{wal} there exists $\delta>0$ such that whenever $\beta_1,\beta_2\in\mathcal{P}_X$ with $|\beta_1|=|\beta_2|=k$ and $\mu(\beta_1\Delta\beta_2)<\delta,$ then $H_\mu(\beta_1|\beta_2)<\ep$.
		Take closed subsets $B_i\subset A_i$ with $$\pi\mu(A_i-B_i)<\frac{\delta}{2k^2}, i=1,\ldots,k.$$ Let $B_0=(\bigcup_{i=1}^kB_i)^c$ and $U_i=B_0\cup B_i,i=1,\ldots,k.$ Then $\pi\mu(B_0)<\frac{\delta}{2k}$ and $\mathcal{U}=\{U_1,\ldots,U_k\}$ is an open cover of $Y$.
	For $j\geq 0$ and  $\beta\in\mathcal{P}_X$ satisfying $T^{-j}\pi^{-1}\mathcal{U}\preceq \beta,$ we can find  $\beta^\prime=\{C_1,\ldots,C_k\}\in\mathcal{P}_X$ satisfying $$C_i\subset T^{-j}\pi^{-1}U_i,i=1,\ldots,k$$ and $\beta\succeq\beta^\prime.$
	Since $$T^{-j}\pi^{-1}B_i\subset C_i\subset T^{-j}\pi^{-1}U_i,$$ we have
	$$\mu(C_i\Delta T^{-j}\pi^{-1}A_i)\leq \mu(T^{-j}\pi^{-1}A_i-T^{-j}\pi^{-1}B_i)+\mu(T^{-j}\pi^{-1}B_0)<\frac{\delta}{k}.$$
	Hence $$\sum_{i=1}^k\mu(C_i\Delta T^{-j}\pi^{-1}A_i)< \delta.$$ It follows that $H_{\mu}(T^{-j}\pi^{-1}\a|\beta^\prime)<\ep$ and hence $H_{\mu}(T^{-j}\pi^{-1}\a|\beta)<\ep.$
\end{proof}

We have  following theorem:
\begin{thm}
	For any $\mu\in M(X_1,T_1)$, we have
	 \begin{equation*}
		\begin{split}
		\sum_{i=1}^{k}a_i h_{\tau_{i-1}\mu}(T_i)&=\sup\limits_{\{\mathcal{U}_i\}_{i=1}^k}\varlimsup_{N\to +\infty}\frac{1}{N}	H_\mu(\bigvee_{i=1}^{k}(\tau_{i-1}^{-1}\U_i)_0^{\lceil(a_1+\ldots+a_i)N\rceil-1})\\
		&=\sup\limits_{\{\mathcal{U}_i\}_{i=1}^k}\varliminf_{N\to +\infty}\frac{1}{N}	H_\mu(\bigvee_{i=1}^{k}(\tau_{i-1}^{-1}\U_i)_0^{\lceil(a_1+\ldots+a_i)N\rceil-1}).		\end{split}
	\end{equation*}
where the supremum are taken over all open covers $\U_i$ of $X_i, i=1,\ldots,k.$
\end{thm}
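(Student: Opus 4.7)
The plan is to squeeze both $\sup_{\{\U_i\}}\varliminf_{N}\tfrac{1}{N}H_\mu(\cdots)$ and $\sup_{\{\U_i\}}\varlimsup_{N}\tfrac{1}{N}H_\mu(\cdots)$ to the common value $\sum_{i=1}^{k}a_i h_{\tau_{i-1}\mu}(T_i)$. I will prove $\varlimsup_{N}\tfrac{1}{N}H_\mu\le\sum_{i=1}^{k}a_i h_{\tau_{i-1}\mu}(T_i)$ for every fixed $\{\U_i\}$, and for each $\ep>0$ exhibit $\{\U_i\}$ with $\varliminf_{N}\tfrac{1}{N}H_\mu\ge\sum_{i=1}^{k}a_i h_{\tau_{i-1}\mu}(T_i)-O(\ep)$. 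Since $\varliminf\le\varlimsup$, the two suprema will coincide with the target.

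For the upper bound, set $M_i:=\lceil(a_1+\cdots+a_i)N\rceil$ and $M_0:=0$. A time-slice reindexing yields
\[
\bigvee_{i=1}^{k}(\tau_{i-1}^{-1}\U_i)_0^{M_i-1}=\bigvee_{i=1}^{k}T_1^{-M_{i-1}}\Bigl(\bigvee_{l=i}^{k}\tau_{l-1}^{-1}\U_l\Bigr)_0^{M_i-M_{i-1}-1},
\]
because at time $j\in[M_{i-1},M_i)$ exactly the indices $l\ge i$ still contribute. Subadditivity of $H_\mu$ for open covers combined with $T_1$-invariance of $\mu$ then gives
\[
H_\mu(\cdots)\le\sum_{i=1}^{k}H_\mu\Bigl((\textstyle\bigvee_{l=i}^{k}\tau_{l-1}^{-1}\U_l)_0^{M_i-M_{i-1}-1}\Bigr);
\]
dividing by $N$ and using $(M_i-M_{i-1})/N\to a_i$ delivers $\varlimsup_{N}\tfrac{1}{N}H_\mu(\cdots)\le\sum_{i=1}^{k}a_i h_\mu(T_1,\bigvee_{l=i}^{k}\tau_{l-1}^{-1}\U_l)$. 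The crucial observation is that because $(X_{l+1},T_{l+1})$ is a factor of $(X_l,T_l)$, each $\tau_{l-1}$ with $l\ge i$ factors through $\tau_{i-1}$, so $\bigvee_{l=i}^{k}\tau_{l-1}^{-1}\U_l=\tau_{i-1}^{-1}\V_i$ for some open cover $\V_i$ of $X_i$. Lemma \ref{l1} then gives $h_\mu(T_1,\tau_{i-1}^{-1}\V_i)=h_{\tau_{i-1}\mu}(T_i,\V_i)\le h_{\tau_{i-1}\mu}(T_i)$, and summing finishes the upper bound.

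For the lower bound, choose partitions $\alpha_i\in\mathcal{P}_{X_i}$ with $h_{\tau_{i-1}\mu}(T_i,\alpha_i)>h_{\tau_{i-1}\mu}(T_i)-\ep$, and apply Lemma \ref{1} to each factor map $\tau_{i-1}\colon X_1\to X_i$ with partition $\alpha_i$ to obtain open covers $\U_i$ of $X_i$ satisfying $H_\mu(T_1^{-j}\tau_{i-1}^{-1}\alpha_i\mid\beta)<\ep_1$ whenever $T_1^{-j}\tau_{i-1}^{-1}\U_i\preceq\beta$. For any partition $\gamma\in\mathcal{P}_{X_1}$ refining $\bigvee_{i=1}^{k}(\tau_{i-1}^{-1}\U_i)_0^{M_i-1}$, we have $\gamma\succeq T_1^{-j}\tau_{i-1}^{-1}\U_i$ for every $i$ and every $j\in[0,M_i-1]$; subadditivity of conditional entropy then gives $H_\mu(\bigvee_{i=1}^{k}(\tau_{i-1}^{-1}\alpha_i)_0^{M_i-1}\mid\gamma)\le\ep_1\sum_{i=1}^{k}M_i$, hence
\[
H_\mu(\gamma)\ge H_\mu\Bigl(\bigvee_{i=1}^{k}(\tau_{i-1}^{-1}\alpha_i)_0^{M_i-1}\Bigr)-\ep_1\sum_{i=1}^{k}M_i.
\]
Taking the infimum over $\gamma$ on the left transfers the bound to $H_\mu(\bigvee_i(\tau_{i-1}^{-1}\U_i)_0^{M_i-1})$; dividing by $N$ and invoking Proposition \ref{p2} to evaluate $\lim_{N}\tfrac{1}{N}H_\mu(\bigvee_i(\tau_{i-1}^{-1}\alpha_i)_0^{M_i-1})=\sum_{i=1}^{k}a_i h_\mu(T_1,\bigvee_{l=i}^{k}\tau_{l-1}^{-1}\alpha_l)\ge\sum_{i=1}^{k}a_i h_{\tau_{i-1}\mu}(T_i,\alpha_i)>\sum_{i=1}^{k}a_i(h_{\tau_{i-1}\mu}(T_i)-\ep)$, and finally letting $\ep_1,\ep\to 0$ yields the required lower bound.

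The main technical point I anticipate is passing from the a.e.\ convergence of $\tfrac{1}{N}I_\mu$ in Proposition \ref{p2} to the integrated version for $\tfrac{1}{N}H_\mu$; since the partition $\bigvee_i(\tau_{i-1}^{-1}\alpha_i)_0^{M_i-1}$ has at most $C^N$ atoms for a constant $C$ depending only on the $|\alpha_i|$ and the weights, the sequence $\tfrac{1}{N}I_\mu$ is uniformly bounded and dominated convergence supplies the passage. The remainder is routine bookkeeping of the small constants $\ep$ and $\ep_1$.
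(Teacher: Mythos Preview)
Your overall strategy matches the paper's: the upper bound via the time-slice decomposition and the factoring $\bigvee_{l\ge i}\tau_{l-1}^{-1}\U_l=\tau_{i-1}^{-1}\V_i$ is exactly what the paper summarizes as ``easy to see'', and your lower bound via Lemma~\ref{1} followed by Proposition~\ref{p2} is the paper's argument as well.

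There is, however, one genuine error in your final technical point. You assert that because the partition $\bigvee_{i}(\tau_{i-1}^{-1}\alpha_i)_0^{M_i-1}$ has at most $C^N$ atoms, the sequence $\tfrac{1}{N}I_\mu$ is uniformly bounded, and then invoke dominated convergence. This is false: a bound on the number of atoms controls $\tfrac{1}{N}H_\mu=\tfrac{1}{N}\int I_\mu\,d\mu$ by $\log C$, but it says nothing about the pointwise values of $I_\mu(x)=-\log\mu\bigl(\text{atom}(x)\bigr)$, which can be arbitrarily large on atoms of small measure. So dominated convergence is not available.

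The fix is immediate and is precisely what the paper does: since $I_\mu\ge 0$, Fatou's Lemma gives
\[
\int \lim_{N\to\infty}\frac{1}{N}I_\mu\Bigl(\bigvee_{i}(\tau_{i-1}^{-1}\alpha_i)_0^{M_i-1}\Bigr)\,d\mu
\;\le\;
\varliminf_{N\to\infty}\frac{1}{N}H_\mu\Bigl(\bigvee_{i}(\tau_{i-1}^{-1}\alpha_i)_0^{M_i-1}\Bigr),
\]
and by Proposition~\ref{p2} the left side equals $\sum_{i}a_i h_\mu\bigl(T_1,\bigvee_{j\ge i}\tau_{j-1}^{-1}\alpha_j\bigr)$. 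This yields only the inequality $\varliminf\ge\sum_i a_i h_\mu(\cdots)$ rather than the full limit you claimed, but that inequality is exactly what your lower bound needs. With this one correction your argument goes through and is essentially identical to the paper's.
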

\begin{proof}
	It is easy to see that
	\begin{equation*}\begin{split}
			\varlimsup_{N\to +\infty}\frac{1}{N}	H_\mu(\bigvee_{i=1}^{k}(\tau_{i-1}^{-1}\U_i)_0^{\lceil(a_1+\ldots+a_i)N\rceil-1})\leq \sum_{i=1}^{k}a_ih_\mu(T_1,\bigvee_{j=i}^{k}\tau_{j-1}^{-1}\U_j)\leq\sum_{i=1}^{k}a_i h_{\tau_{i-1}\mu}(T_i).
	\end{split}\end{equation*}	
Now we prove the opposite direction.	
	Let $\ep>0$ and $\a_i\in \mathcal{P}_{X_i},i=1,\ldots,k.$ By Lemma \ref{1}, we can find  corresponding  open covers $\mathcal{U}_i$ of $X_i$. For  $\beta\in \mathcal{P}_{X_1}$ with
	$$\beta\succeq\bigvee_{i=1}^{k}(\tau_{i-1}^{-1}\mathcal{U}_i)_0^{\lceil(a_1+\ldots+a_i)N\rceil-1},$$	
	we have $$\beta\succeq T_1^{-j}\tau_{i-1}^{-1}\mathcal{U}_i, 1\leq i\leq k, 0\leq j\leq \lceil(a_1+\ldots+a_i)N\rceil-1.$$
	By Lemma \ref{1}, we have $$H_\mu\big(T_1^{-j}\tau_{i-1}^{-1}\a_i|\beta\big)\leq\ep, 1\leq i\leq k, 0\leq j\leq \lceil(a_1+\ldots+a_i)N\rceil-1.$$
	It follows that
	\begin{equation*}
		\begin{split}
			H_\mu\big(\bigvee_{i=1}^{k}(\tau_{i-1}^{-1}\a_i)_0^{\lceil(a_1+\ldots+a_i)N\rceil-1}\big)&\leq H_\mu(\beta)+ H_\mu\big(\bigvee_{i=1}^{k}(\tau_{i-1}^{-1}\a_i)_0^{\lceil(a_1+\ldots+a_i)N\rceil-1}|\beta\big)\\
			&\leq 	H_\mu(\beta)+\sum_{i=1}^k\sum_{j=0}^{\lceil(a_1+\ldots+a_i)N\rceil-1}H_\mu\big(T_1^{-j}\tau_{i-1}^{-1}\a_i|\beta\big)\\
			&\leq 	H_\mu(\beta)+\sum_{i=1}^k\lceil(a_1+\ldots+a_i)N\rceil\ep.
		\end{split}
	\end{equation*}
	Since $\beta$ is arbitrary, we have \begin{equation*}
		\begin{split}
			&	H_\mu\big(\bigvee_{i=1}^{k}(\tau_{i-1}^{-1}\a_i)_0^{\lceil(a_1+\ldots+a_i)N\rceil-1}\big)
			\\
			&	\leq H_\mu\big(\bigvee_{i=1}^{k}(\tau_{i-1}^{-1}\mathcal{U}_i)_0^{\lceil(a_1+\ldots+a_i)N\rceil-1}\big)+\sum_{i=1}^k\lceil(a_1+\ldots+a_i)N\rceil\ep.
		\end{split}
	\end{equation*}
	Hence \begin{equation*}
		\begin{split}
		&\varliminf_{N\to +\infty}\frac{1}{N}	H_\mu\big(\bigvee_{i=1}^{k}(\tau_{i-1}^{-1}\a_i\big)_0^{\lceil(a_1+\ldots+a_i)N\rceil-1})\\&\leq \varliminf_{N\to +\infty}\frac{1}{N}	H_\mu\big(\bigvee_{i=1}^{k}(\tau_{i-1}^{-1}\U_i)_0^{\lceil(a_1+\ldots+a_i)N\rceil-1}\big)+\sum_{i=1}^k(a_1+\ldots+a_i)\ep.	
		\end{split}
	\end{equation*}
	Since $I_\mu\geq 0,$ by Fatou's Lemma  and Proposition \ref{p2}  we have
	\begin{equation*}
		\begin{split}
			\sum_{i=1}^{k}a_ih_\mu(T_1,\bigvee_{j=i}^{k}\tau_{j-1}^{-1}\a_j)&=\int \sum_{i=1}^{k}a_i\mathbb{E}_\mu(F_i|\mathcal{I}_\mu)(x)d\mu(x)\\ &=\int\varliminf_{N\to +\infty}\frac{1}{N}I_\mu\big(\bigvee_{i=1}^{k}(\tau_{i-1}^{-1}\a_i)_0^{\lceil(a_1+\ldots+a_i)N\rceil-1}\big)(x)d\mu(x) \\
			&\leq	\varliminf_{N\to +\infty}\int\frac{1}{N}I_\mu\big(\bigvee_{i=1}^{k}(\tau_{i-1}^{-1}\a_i)_0^{\lceil(a_1+\ldots+a_i)N\rceil-1}\big)(x)d\mu(x)\\
			&=
			\varliminf_{N\to +\infty}\frac{1}{N}	H_\mu\big(\bigvee_{i=1}^{k}(\tau_{i-1}^{-1}\a_i)_0^{\lceil(a_1+\ldots+a_i)N\rceil-1}\big)
		\end{split}
	\end{equation*}
	Hence $$ \sum_{i=1}^{k}a_ih_{\tau_{i-1}\mu}(T_i)\leq \sup\limits_{\{\mathcal{U}_i\}_{i=1}^k}\varliminf_{N\to +\infty}\frac{1}{N}	H_\mu\big(\bigvee_{i=1}^{k}(\tau_{i-1}^{-1}\U_i)_0^{\lceil(a_1+\ldots+a_i)N\rceil-1}\big).$$
\end{proof}
\begin{cor}For any $\mu\in M(X_1,T_1)$, we have
	$$\sup\limits_{\{\mathcal{U}_i\}_{i=1}^k}\sum_{i=1}^{k}a_ih_\mu(T_1,\bigvee_{j=i}^{k}\tau_{j-1}^{-1}\U_j)=\sum_{i=1}^{k}a_ih_{\tau_{i-1}\mu}(T_i).$$
\end{cor}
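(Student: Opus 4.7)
The corollary is essentially already contained in the chain of inequalities displayed at the start of the proof of the preceding theorem. My plan is to pass to the supremum in that chain.

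\medskip

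\noindent\textbf{Step 1: The upper bound.} For each $i$, note that by the factorization $\tau_{j-1} = \phi_{i,j}\circ \tau_{i-1}$ (where $\phi_{i,j}:X_i\to X_j$ is the composition of the intermediate factor maps, so $\phi_{i,i}=\id_{X_i}$), we have
$$\bigvee_{j=i}^{k}\tau_{j-1}^{-1}\U_j = \tau_{i-1}^{-1}\bigl(\bigvee_{j=i}^{k}\phi_{i,j}^{-1}\U_j\bigr)=:\tau_{i-1}^{-1}\W_i,$$
an open cover of $X_1$ pulled back from a cover $\W_i$ of $X_i$. By Lemma \ref{l1}, $h_\mu(T_1,\tau_{i-1}^{-1}\W_i)=h_{\tau_{i-1}\mu}(T_i,\W_i)\leq h_{\tau_{i-1}\mu}(T_i)$. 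Summing with the weights $a_i$ yields
$$\sup_{\{\mathcal{U}_i\}_{i=1}^k}\sum_{i=1}^{k}a_ih_\mu\Bigl(T_1,\bigvee_{j=i}^{k}\tau_{j-1}^{-1}\U_j\Bigr)\leq\sum_{i=1}^{k}a_ih_{\tau_{i-1}\mu}(T_i).$$

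\medskip

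\noindent\textbf{Step 2: The lower bound.} This is the subadditivity estimate already used in the previous theorem. Splitting the join $\bigvee_{i=1}^{k}(\tau_{i-1}^{-1}\U_i)_0^{\lceil(a_1+\ldots+a_i)N\rceil-1}$ block-wise along the lengths $\lceil(a_1+\ldots+a_i)N\rceil$ and applying subadditivity of $H_\mu$ plus $T_1$-invariance gives
$$\varlimsup_{N\to\infty}\frac{1}{N}H_\mu\Bigl(\bigvee_{i=1}^{k}(\tau_{i-1}^{-1}\U_i)_0^{\lceil(a_1+\ldots+a_i)N\rceil-1}\Bigr)\leq \sum_{i=1}^{k}a_ih_\mu\Bigl(T_1,\bigvee_{j=i}^{k}\tau_{j-1}^{-1}\U_j\Bigr).$$
Taking the supremum over open covers $\{\U_i\}$ of the $X_i$ and invoking the preceding theorem (which identifies the left-hand supremum with $\sum_{i=1}^{k}a_ih_{\tau_{i-1}\mu}(T_i)$) produces the reverse inequality
$$\sum_{i=1}^{k}a_ih_{\tau_{i-1}\mu}(T_i)\leq\sup_{\{\mathcal{U}_i\}_{i=1}^k}\sum_{i=1}^{k}a_ih_\mu\Bigl(T_1,\bigvee_{j=i}^{k}\tau_{j-1}^{-1}\U_j\Bigr).$$

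\medskip

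\noindent\textbf{Step 3: Conclusion.} Combining the two bounds gives the claimed equality. There is essentially no obstacle here; the content of the corollary lies entirely in the previous theorem. The only minor point to verify carefully is the factorization in Step 1, which rewrites $\bigvee_{j=i}^{k}\tau_{j-1}^{-1}\U_j$ as the $\tau_{i-1}$-pullback of an open cover of $X_i$, so that Lemma \ref{l1} applies cleanly.
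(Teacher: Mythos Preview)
Your proof is correct and follows essentially the same approach as the paper: the corollary is a sandwich argument using the chain of inequalities displayed at the beginning of the proof of the preceding theorem, with the theorem itself supplying the equality of the outer terms after taking the supremum. Your Steps~1 and~2 simply make explicit the two ``easy to see'' inequalities in that chain (via Lemma~\ref{l1} and subadditivity, respectively), which the paper leaves to the reader.
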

Combing Theorem \ref{main} and Theorem \ref{thm4.9}, we have
\begin{cor}\cite[Theorem 1.4]{fh}(variational principle for weighted topological pressure)
For $f\in C(X_1,\R),$ we have	$$P^{{\bf a}}(T_1,f)=\sup_{\mu\in M(X_1,T_1)}\Big(\sum_{i=1}^{k}a_ih_{\tau_{i-1}\mu}(T_i)+\int fd\mu\Big).$$
\end{cor}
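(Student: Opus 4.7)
The plan is to establish the two inequalities of the variational principle separately. The lower bound
\[
P^{\bf a}(T_1,f)\geq\sup_{\mu\in M(X_1,T_1)}\Big(\sum_{i=1}^{k}a_ih_{\tau_{i-1}\mu}(T_i)+\int f\,d\mu\Big)
\]
has already been derived in the corollary immediately after Theorem~\ref{main} (take the supremum over partitions $\alpha_i\in\mathcal{P}_{X_i}$ and let $\varepsilon\to 0$), so only the matching upper bound remains to be argued.

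For the upper bound I fix an arbitrary family of open covers $\{\mathcal{U}_i\}_{i=1}^k$ and invoke Theorem~\ref{thm4.9} to produce an ergodic $\mu=\mu_{\{\mathcal{U}_i\}}\in M(X_1,T_1)$ satisfying
\[
P_W^{\bf a}(T_1,\{\mathcal{U}_i\}_{i=1}^k,f)\leq\sum_{i=1}^k a_i\,h_\mu\Big(T_1,\bigvee_{j=i}^k\tau_{j-1}^{-1}\mathcal{U}_j\Big)+3w_f\Big(\bigvee_{i=1}^k\tau_{i-1}^{-1}\mathcal{U}_i\Big)+\int f\,d\mu.
\]
The next step is to bound each local-entropy term by $h_{\tau_{i-1}\mu}(T_i)$. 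Since $(X_j,T_j)$ is a factor of $(X_i,T_i)$ for every $j\geq i$, there are factor maps $\sigma_{i,j}\colon X_i\to X_j$ with $\tau_{j-1}=\sigma_{i,j}\circ\tau_{i-1}$ and $\sigma_{i,i}=\mathrm{id}_{X_i}$. Hence $\bigvee_{j=i}^k\tau_{j-1}^{-1}\mathcal{U}_j=\tau_{i-1}^{-1}\bigl(\bigvee_{j=i}^k\sigma_{i,j}^{-1}\mathcal{U}_j\bigr)$, and Lemma~\ref{l1} together with the standard bound $h_\rho(T_i,\mathcal{V})\leq h_\rho(T_i)$ yields
\[
h_\mu\Big(T_1,\bigvee_{j=i}^k\tau_{j-1}^{-1}\mathcal{U}_j\Big)=h_{\tau_{i-1}\mu}\Big(T_i,\bigvee_{j=i}^k\sigma_{i,j}^{-1}\mathcal{U}_j\Big)\leq h_{\tau_{i-1}\mu}(T_i).
\]

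Summing with weights and absorbing the particular $\mu$ into a supremum over $\nu\in M(X_1,T_1)$ gives
\[
P_W^{\bf a}(T_1,\{\mathcal{U}_i\},f)\leq\sup_{\nu\in M(X_1,T_1)}\Big(\sum_{i=1}^k a_i h_{\tau_{i-1}\nu}(T_i)+\int f\,d\nu\Big)+3w_f\Big(\bigvee_i\tau_{i-1}^{-1}\mathcal{U}_i\Big).
\]
To close the loop I let $\max_i\mathrm{diam}\,\mathcal{U}_i\to 0$: the join $\bigvee_i\tau_{i-1}^{-1}\mathcal{U}_i$ refines $\mathcal{U}_1$ (since $\tau_0=\mathrm{id}_{X_1}$) and so its mesh in $X_1$ tends to zero, forcing $w_f(\bigvee_i\tau_{i-1}^{-1}\mathcal{U}_i)\to 0$ by uniform continuity of $f$; meanwhile Theorem~\ref{2.4} identifies $\sup_{\{\mathcal{U}_i\}}P_W^{\bf a}(T_1,\{\mathcal{U}_i\},f)$ with $P^{\bf a}(T_1,f)$. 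The one non-routine step is the factoring identity $\tau_{j-1}=\sigma_{i,j}\circ\tau_{i-1}$ used to rewrite the weighted join as a single $\tau_{i-1}$-preimage: replacing it with the naive subadditivity bound would only produce $\sum_j(a_1+\cdots+a_j)h_{\tau_{j-1}\mu}(T_j)$ on the right, strictly larger than the target $\sum_i a_ih_{\tau_{i-1}\mu}(T_i)$, and would fail to yield the variational principle.
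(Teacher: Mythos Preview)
Your proof is correct and follows essentially the same route as the paper: the lower bound is the corollary after Theorem~\ref{main}, and the upper bound comes from Theorem~\ref{thm4.9} together with Theorem~\ref{2.4} and the elementary inequality $h_\mu(T_1,\bigvee_{j=i}^k\tau_{j-1}^{-1}\mathcal{U}_j)\leq h_{\tau_{i-1}\mu}(T_i)$, after which one lets $\max_i\mathrm{diam}\,\mathcal{U}_i\to 0$ to kill the $w_f$ term. The only presentational difference is that the paper records this last inequality as a consequence of the full identity $\sup_{\{\mathcal{U}_i\}}\sum_i a_i h_\mu(T_1,\bigvee_{j\geq i}\tau_{j-1}^{-1}\mathcal{U}_j)=\sum_i a_i h_{\tau_{i-1}\mu}(T_i)$ (the corollary following Theorem~4.11), whereas you extract only the trivial direction directly via Lemma~\ref{l1} and the factorization $\tau_{j-1}=\sigma_{i,j}\circ\tau_{i-1}$; for the variational principle that is all that is needed.
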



\begin{thebibliography}{14}
	\bibitem{good69}
	L. Goodwyn, Topological entropy bounds measure-theoretic entropy, Proc. Amer. Math.
	Soc. 23 (1969) 679-688.
	\bibitem{din70}
	E.  Dinaburg, A correlation between topological entropy and metric entropy, Dokl. Akad. Nauk
	SSSR 190 (1970) 19-22.
	\bibitem{good71}
	T. Goodman, Relating topological entropy and measure entropy, Bull. London Math.
	Soc. 3 (1971) 176-180.
	\bibitem{rue73}
	D. Ruelle, Statistical mechanics on a compact set with $Z^v$ action satisfying expansiveness and
	specification, Trans. Amer. Math. Soc. 185 (1973) 237-251.
	\bibitem{wal75}
	P. Walters, A variational principle for the pressure of continuous transformations, Amer. J. Math.
	17 (1975) 937-971.
	\bibitem{bed84}
	T. Bedford, Crinkly curves, Markov partitions and box dimension in self-similar sets, Ph.D.
	Thesis, University of Warwick, 1984.
	\bibitem{mc84}
	C. McMullen, The Hausdorff dimension of general Sierpinski carpets, Nagoya Math. J. 96 (1984)
	1-9.
	\bibitem{kp96a}
	R. Kenyon, Y. Peres, Measures of full dimension on affine-invariant sets, Ergod. Theory Dyn.
	Syst. 16 (1996) 307-323.
	\bibitem{bowen}
	R. Bowen, Topological entropy for noncompact sets, Trans. Am. Math. Soc. 184 (1973) 125–136.
	\bibitem{4}
	F. Blanchard, Fully positive topological entropy and topological mixing, Symbolic Dynamics and Its
	Applications (AMS Contemporary Mathematics, 135). Ed. P. Walters, American Mathematical Society,
	Providence, (1992) 95–105.
	\bibitem{5}
F.	Blanchard, A disjointness theorem involving topological entropy, Bull. Soc. Math. France. 121 (1993) 565–578. 
		\bibitem{6}
	F.	 Blanchard,  E. Glasner, B. Host,  A variation on the variational principle and applications to entropy
		pairs, Ergod. Theory Dyn. Syst. 17 (1997) 29–43. 
			\bibitem{7}
		F.	Blanchard, B. Host, A. Maass, S. Martinez, D. Rudolph, Entropy pairs for a measure, Ergod. Theory
			Dyn. Syst. 15 (1995) 621–632. 
	\bibitem{10}
A.	Dooley, G.  Zhang,  Local entropy theory of a random dynamical system, Memoirs of the American
	Mathematical Society, 233 (2015) 1099.
\bibitem{13}
E. Glasner,  B. Weiss, On the interplay between measurable and topological dynamics. Handbook of
Dynamical Systems. Vol. 1B. Eds. Hasselblatt and Katok. North-Holland, Amsterdam, (2005) 597–
648.
\bibitem{14} 
 E. Glasner, X. Ye,  Local entropy theory, Ergod. Theory Dyn. Syst. 29 (2) (2009) 321–356. 
\bibitem{15}
W. Huang, X. Ye,  A local variational relation and applications, Israel J. Math. 151(1) (2006) 237–279. 
\bibitem{wu}
W. Wu,  Local pressure of subsets and measures, J. Stat. Phys. 185 (2021), no. 2, Paper No. 9.
	\bibitem{wal}
P. Walters, An Introduction to Ergodic Theory (Graduate Texts in Mathematics, 79). Springer,
New York, 1982.
	\bibitem{zczy}
 C. Zhao, E. Chen, X. Zhou, Z. Yin, Weighted topological entropy of the set of generic points in topological dynamical systems, J. Dynam. Differential Equations. 30 (2018) no. 3, 937–955.
 \bibitem{fh}
 D. Feng, W. Huang,  Variational principle for weighted topological pressure, J. Math. Pures Appl. (9) 106 (2016) no. 3, 411–452.
 \bibitem{ro}
 P. Romagnoli, A local variational principle for the topological entropy, Ergod. Theory Dyn. Syst. 23 (2003) 1601–1610.
 \bibitem{hmry}
 W. Huang, A. Maass, P. Romagnoli, X. Ye, Entropy pairs and a local
 Abramov formula for a measure theoretical entropy of open covers, Ergod. Theory Dyn. Syst. 24 (2004) 1127–1153.
 \bibitem{hyz}
 W. Huang, X. Ye, G. Zhang, A local variational principle for conditional
 entropy, Ergod. Theory Dyn. Syst. 26 (2006) 219–245.
 \bibitem{hy}
W. Huang, Y. Yi,  A local variational principle of pressure and its applications to equilibrium states, Israel J. Math. 161 (2007) 29–74.
\bibitem{cfh}
Y. Cao, D. Feng, W. Huang,  The thermodynamic formalism for sub-additive potentials, Discrete Contin. Dyn. Syst. 20 (2008) no. 3, 639–657.
\bibitem{pp}
Y. Pesin, B. Pitskel, Topological pressure and the variational principle for noncompact sets, Funct. Anal.
Appl. 18 (1984) 307–318. 
\bibitem{wh}
T. Wang, Y. Huang, Weighted topological and measure-theoretic entropy, Discrete Contin. Dyn. Syst. 39 (2019)
3941–3967. 
\bibitem{sxz}
J. Shen, L. Xu, X. Zhou,  Weighted entropy of a flow on non-compact sets, J. Dynam. Differential Equations. 32 (2020) no. 1, 181–203.
\bibitem{yclz}
K. Yang, E. Chen, Z. Lin, X. Zhou,
Weighted Topological Entropy of Random Dynamical Systems, arXiv: 2207.09719.
\bibitem{tm}
M. Tsukamoto,  New approach to weighted topological entropy and pressure, Ergod. Theory Dyn. Syst. 43 (2023) no. 3, 1004–1034.
\bibitem{ycyy}
J. Yang, E. Chen, R. Yang, X. Yang,
Variational principle for weighted amenable topological pressure, arXiv: 2306.15509. 
\end{thebibliography}
\end{document}